\documentclass[12pt]{article}
%\usepackage[utf8]{inputenc}
%\usepackage[left=2cm,right=2cm]{geometry}

%\setlength{\textwidth}{14cm}
 % Keywords command
\providecommand{\keywords}[1]
{
  \small	
  \textbf{\textit{Keywords---}} #1
}

\usepackage{lipsum}
\newcommand\blfootnote[1]{%
  \begingroup
  \renewcommand\thefootnote{}\footnote{#1}%
  \addtocounter{footnote}{-1}%
  \endgroup
}

\usepackage[backend=biber, style=alphabetic ]{biblatex}

\addbibresource{mybibliography.bib}

\usepackage[hidelinks]{hyperref}
 
\usepackage{amsmath, amsthm, amssymb, latexsym,epsfig,amsthm,enumerate,multicol,wasysym}
\usepackage{color}
\usepackage{graphicx}
\usepackage{nccrules}
\usepackage{xfrac}
\usepackage{hyperref}
\usepackage{textcomp}
\usepackage{tikz}
\usepackage{xcolor}
\usepackage{comment}

\numberwithin{equation}{section}

\theoremstyle{plain}
\newtheorem{theorem}{Theorem}[section]
\newtheorem{lemma}[theorem]{Lemma}
\newtheorem{corollary}[theorem]{Corollary}
\newtheorem{proposition}[theorem]{Proposition}

\theoremstyle{definition}
\newtheorem{definition}[theorem]{Definition}

\newtheorem{claim}[theorem]{Claim}

\newtheorem{case[theorem]}{Case}

\theoremstyle{remark}
\newtheorem{remark}[theorem]{Remark}

\numberwithin{equation}{section}

%    Absolute value notation

%    Hausdorff dimension notation

%    Euclidean space notation
\def\R{\mathbb{R}}
\def\C{\mathbb{C}}
%    Blank box placeholder for figures (to avoid requiring any
%    particular graphics capabilities for printing this document).

\usepackage[normalem]{ulem}

\newcommand{\Z}{\mathbb Z}
\newcommand{\N}{\mathbb N}
\newcommand{\cW}{\mathcal W}
\newcommand{\cJ}{\mathcal J}
\newcommand{\cB}{\mathcal B}
\newcommand{\cK}{\mathcal K}
\newcommand{\cH}{\mathcal H}
\newcommand{\cL}{\mathcal L}
\newcommand{\cM}{\mathcal M}
\newcommand{\cI}{\mathcal I}

\newcommand{\cE}{\mathcal E}
\newcommand{\cT}{\mathcal T}

\newcommand{\bL}{\mathbf L}
\newcommand{\bk}{\mathbf k}

\newcommand{\supp}{\mathrm{supp}}
\newcommand{\dist}{\mathrm{dist}}
\newcommand{\diam}{\mathrm{diam}}

\def\R{\mathbb{R}}

\title{On the Eigenvalue Distribution of Spatio-Spectral Limiting Operators in Higher Dimensions}
 \author{Arie Israel and Azita Mayeli}
\date{}

\begin{document}

\maketitle

\blfootnote{A. Israel was supported by the Air Force Office of Scientific Research, under award FA9550-19-1-0005.  A. Mayeli was supported in part by AMS-Simons Research Enhancement Grant and the PSC-CUNY research grants.}

\begin{abstract}
Prolate spheroidal wave functions are an orthogonal family of bandlimited functions on $\R$ that have the highest concentration within a specific time interval. They are also identified as the eigenfunctions of a time-frequency limiting operator (TFLO), and the associated eigenvalues belong to the  interval $[0, 1]$. Previous work has studied the asymptotic distribution and clustering behavior of the TFLO eigenvalues.

In this paper, we extend these results to multiple dimensions. We prove estimates on the eigenvalues of a \emph{spatio-spectral limiting operator} (SSLO) on $L^2(\R^d)$, which is an alternating product of projection operators associated to given spatial and frequency domains in $\R^d$. If one of the domains is a hypercube, and the other domain is a convex body satisfying a symmetry condition, we derive quantitative bounds on the distribution of the SSLO eigenvalues in the interval $[0,1]$. 

To prove our results, we design an orthonormal system of wave packets in $L^2(\mathbb{R}^d)$ that are highly concentrated in the spatial and frequency domains. We show that these wave packets are ``approximate eigenfunctions'' of a spatio-spectral limiting operator. To construct the wave packets, we use a variant of the Coifman-Meyer local sine basis for $L^2[0,1]$, and we lift the basis to higher dimensions using a tensor product.
\end{abstract} 

\keywords{Prolate spheroidal wave functions, Spatio-spectral limiting operators, spectral analysis, wave packets}

\tableofcontents 
 
\section{Introduction} 
The Heisenberg uncertainty principle states that a function $f : \R^d \rightarrow \R$ and its Fourier transform $\widehat{f} : \R^d \rightarrow \R$ cannot simultaneously be well-localized in the spatial and frequency domains.  
As a result, one is curious about how much a bandlimited function can be concentrated on a given spatial  region, and how much a spacelimited function can be concentrated on a given frequency region. 
We refer to this as the {\it spatio-spectral concentration problem}. We shall see that this problem can be formulated as the eigenvalue problem for a certain self-adjoint operator on $L^2(\R^d)$.

%\footnote{Azita: I changed the name of the problem in $\R^d$ to ``spatio-spectral concentration problems'' to handle both cases of exactly spacelimited/approximately bandlimited functions and exactly bandlimited/approximately spacelimited functions. This name seems to be standard in the literature, e.g., literature  search pulls a few results (e.g., see  \href{https://arxiv.org/abs/math/0408424}{{\color{blue} link 1}} and \href{https://arxiv.org/ftp/arxiv/papers/1608/1608.03031.pdf}{{\color{blue} link 2}} and \href{https://www.sciencedirect.com/science/article/pii/S1063520315000470}{{\color{blue} link 3}}), whereas ``space concentration problem'' obscures the Fourier constraints in the problem, and I could not find this name appearing in the literature. \\

%Arie: I agree with what you said. Thanks for looking at this carefully. I also found these papers \href{https://link.springer.com/content/pdf/10.1007/s43670-021-00014-2.pdf}{{\color{blue} link 4}} and \href{https://arxiv.org/pdf/2110.15523.pdf}{{\color{blue} link 5}}
%where the operator is introduced as spatio-spectral limiting operator, while we defined spatio-spectral limiting operator. The authors say that the name spatio-spectral is known in geometry context.  Should we use that name? Our operator will be SSLO instead of SFLO. We can also refer to their paper, if you agree.}

By the seminal work of Landau, Slepian, and Pollak, \cite{Bell1,Bell2,Bell3}, it is known that the \emph{prolate spheroidal wave functions} (PSWFs)  are the solutions to the  spatio-spectral concentration problem in dimension $d=1$, for the case when the spatial and frequency domains are intervals $I, J$ in $\R$. Specifically, the PSWFs $\{\psi_k\}$ may be defined recursively as follows: First, $\psi_1$ is a bandlimited unit-norm $L^2$ function with the largest proportion of its $L^2$ energy on $I$; then, after $\psi_1,\cdots,\psi_k$ are determined, $\psi_{k+1}$ is a bandlimited unit-norm $L^2$ function that is orthogonal to $\psi_1,\cdots,\psi_k$ and with the largest proportion of its $L^2$ energy on $I$.  The PSWFs can be characterized as the eigenfunctions of a compact self-adjoint  operator, and accordingly, they form an orthogonal basis for the space of all bandlimited functions with Fourier support in $J$. The  PSWF eigenvalues $\{\lambda_k\}$ measure the (spatial) $L^2$ concentration of the PSWF sequence on $I$, i.e., $\lambda_k = \int_I |\psi_k(x)|^2 dx/\int_\R |\psi_k(x)|^2dx$. Naturally, these numbers belong to the interval $(0,1]$. Further, the $\lambda_k$ can be interpreted as the eigenvalues of a compact self-adjoint operator, and so, they form a  monotonic sequence converging to $0$. 

The PSWF eigenvalues exhibit a sharp clustering behavior near $0$ and $1$ and their distribution properties reveal information about the dimension of the space of approximately spacelimited and exactly bandlimited functions on the real line.  This behavior has been studied in the  papers by Landau, Slepian, and Pollak \cite{Bell1,Bell2, Bell3}. Further results and improvements were obtained more recently; e.g., see Osipov \cite{Osipov13}, Israel \cite{Israel15},  Karnik-Romberg-Davenport  \cite{Karnik21}, and Bonami-Jaming-Karoui \cite{Bonami21}. The discrete version of the problem has also been considered  \cite{Bell5, Karnik21}.

In higher dimensions, the spatio-spectral concentration problem has been examined in \cite{Bell4, Shkolnisky07}  for the case when the space and frequency domains are discs in $\R^2$. Then the solution sequence can be described in terms of the eigenfunctions of a certain differential operator. These functions form an orthogonal system in $L^2(\R^2)$, known as \emph{generalized prolate spheroidal functions} (GPSFs). We should remark that the higher-dimensional setting is relevant for applications in scientific imaging problems, e.g., in cryoelectron microscopy (cryo-EM) and MRI,  which rely on representations with respect to an orthogonal basis of approximately space-limited and (exactly) bandlimited functions of several variables. In particular, GPSFs have been used to perform signal compression in MRI  \cite{Yang02}, clustering and principal component analysis for cryo-EM imaging data \cite{Shkolnisky17A, Shkolnisky17B} and also for the analysis of the alignment problem in cryo-EM \cite{Lederman17}; see also \cite{LedermanSinger17, LedermanSinger20}.

By now, many of these problems are rather well understood in one dimension. However, in the higher-dimensional case, fundamental questions still remain unresolved. 
%In particular, no quantitative bounds on the GPSF eigenvalues are presently known.\footnote{The paper \cite{GreengardSerkh18} presents a numerical method to compute the eigenvalues associated to GPSFs, and evaluates the eigenvalues numerically in several examples. An asymptotic formula for the distribution of the GPSF eigenvalues, due to Landau \cite{Landau75}, will be stated in the next section -- see \eqref{landau-eig-clustering_eqn}.}   
The purpose of this work is to address this gap in understanding between $d=1$ and $d \geq 2$. 
 % The current work serves as a \sout{first} step to address this gap in understanding between $d=1$ and $d \geq 2$.
When both the spatial and frequency domains are balls in $\R^d$, the orthogonal family consisting of the GPSFs in $\R^d$ has been studied in \cite{Lederman17,GreengardSerkh18}, including several applications to numerical analysis for processing of bandlimited functions (e.g., quadrature, differentiation, and interpolation schemes). The main question addressed in this paper is to explain \emph{precisely how well} such functions are concentrated in the spatial domain.

In this paper, we give a solution to a related problem: We derive bounds on the distribution of eigenvalues for the spatio-spectral concentration problem, in the case when the spatial domain is a hypercube in $\R^d$, and the frequency domain is a convex body in $\R^d$ satisfying a certain symmetry condition. In particular, our analysis applies to the case when the spatial and frequency domains are a cube and ball in $\R^d$, respectively. 
%We also derive a somewhat sharper estimate in the case when the spatial and frequency domains are both cubes in $\R^d$.

%Following the submission of our paper to the arXiv, we were made aware of the preprint \cite{MaRoSp23} submitted in the same week, which presented related results. For a comparison of the two works, see Remark \ref{comp:rem}.

\subsection{Statement of the variational problem and main results}  

The Fourier transform $\mathcal{F} $ and inverse Fourier transform $\mathcal{F}^{-1} $ on $L^2(\R^d)$ are defined by $\mathcal{F}[f](\xi) = \int_{\R^d} f(x) e^{-ix \cdot \xi} dx$, and  $\mathcal{F}^{-1}[g](x) = \frac{1}{(2 \pi)^d} \int_{\R^d} g(\xi) e^{i x \cdot \xi} d \xi$. We adopt the  shorthand $\widehat{f} = \mathcal{F}(f)$ for the Fourier transform of $f$. 
We write $\chi_A : \R^d \rightarrow \R$ for the indicator function of a set $A \subset \R^d$, given by $\chi_A(x) = 1$ if $x \in A$, $\chi_A(x) = 0$ if $x \notin A$.

Given a measurable set $S \subset \R^d$, a function $f \in L^2(\R^d)$ is said to be \emph{$S$-bandlimited} if $\widehat{f}(\xi) = 0$ for a.e. $\xi \in \R^d \setminus S$. We denote the subspace of all $S$-bandlimited functions in $L^2(\R^d)$ by $\cB(S)$.

Given measurable sets  $Q, S \subset \R^d$,   with finite and positive $d$-dimensional Lebesgue measures $\mu_d(Q),\mu_d(S) \in (0,\infty)$,  we pose the spatio-spectral concentration problem:  {\it Which $S$-bandlimited functions have the   most  $L^2$-energy   on  $Q$?}
More precisely, we consider the variational problem
\begin{align}\label{Raleigh-quo}
\sup_{\psi \in \cB(S) \setminus \{0\}}  \left\{ \mathcal{J}(\psi) := \| \psi \|_{L^2(Q)}^2 \big/ \| \psi \|_{L^2(\R^d)}^2 = \int_Q |\psi(x)|^2 dx\Big/ \int_{\R^d} |\psi(x)|^2 dx  \right\}.
\end{align}

%This is a  variational problem in the intersection of applied harmonic analysis, spectral analysis of operators,  and signal analysis that is  relevant to the study of the space of bandlimited functions in   signal processing. 
Problem \eqref{Raleigh-quo} is equivalent to an eigenvalue problem for a  compact self-adjoint operator on $L^2(\R^d)$. Specifically, define the  orthogonal projection operators $P_Q$ ($Q$-spacelimiting) and $B_S$ ($S$-bandlimiting) on $L^2(\R^d)$ by
$P_Q[f] := \chi_Q f,  $ and $ B_S[f] := \mathcal{F}^{-1}[ \chi_S(\xi) \widehat{f}(\xi)]$, for $f\in L^2(\R^d)$.  We refer to $\widetilde{\cT}_{Q,S}:= B_S P_Q B_S : L^2(\R^d) \rightarrow L^2(\R^d)$ as a {\it spatio-spectral limiting operator} (SSLO). 

The operator  $\widetilde{\cT}_{Q,S}$ is compact and positive semidefinite on $L^2(\R^d)$, and all of its eigenvalues belong to the interval $[0,1]$; see \cite{Landau75}.

One can show that $\mathcal{J}(\psi)$ in \eqref{Raleigh-quo} is the Rayleigh quotient of $\widetilde{\cT}_{Q,S}$, in the sense that $\mathcal{J}(\psi) = \langle \psi , \widetilde{\cT}_{Q,S} \psi \rangle/ \| \psi \|^2$  for $\psi \in \mathcal{B}(S)$, $\psi \neq 0$; see Remark \ref{remk:SSLO_alt}. Thus, problem \eqref{Raleigh-quo} is equivalent to the task of determining the largest eigenvalue of $\widetilde{\cT}_{Q,S}$.

More generally, for $k\geq 0$, we  consider the task of maximizing the quantity $\inf_{\psi \in H, \psi \neq 0} \mathcal{J}(\psi)$ over all $(k+1)$-dimensional subspaces $H \subset \cB(S)$. By the variational characterization of eigenvalues, the optimal value for this problem is equal to the $k$'th largest eigenvalue of $\widetilde{\cT}_{Q,S}$, and the optimal subspace $H$ is spanned by the top $k$ eigenfunctions of $\widetilde{\cT}_{Q,S}$. We will denote ${\lambda}_k(Q,S) \in (0,1]$ for the $k$'th largest  eigenvalue of $\widetilde{\cT}_{Q,S}$ (counted with multiplicity), and  ${\psi}_k(Q,S) \in L^2(\R^d)$ for the corresponding eigenfunction of $\widetilde{\cT}_{Q,S}$, so that $\{ \psi_k(Q,S)\}_{k \geq 0}$ is an orthonormal basis for $\cB(S) = \ker (\widetilde{T}_{Q,S})^\perp = \mathrm{range}(\widetilde{T}_{Q,S})$.
  
We will investigate the distribution and decay rate of the eigenvalue sequence $\lambda_k(Q,S)$. Specifically,  given $\epsilon \in (0,1/2)$, we ask to estimate the number  of eigenvalues in the interval $(\epsilon, 1]$,  and in  the interval $(\epsilon, 1-\epsilon)$. Further, we ask to determine the decay rate of  $\lambda_k(Q,S)$ as $k \rightarrow \infty$.

If $Q$ and $S$ are bounded domains in $\R^d$, it is possible to prove a decay estimate of the form $\lambda_k(Q,S) \leq C_d \exp(-c(Q,S) k^{1/d})$, for a dimensional constant $C_d > 0$, and a constant $c(Q,S) > 0$ determined by the diameters of $Q$ and $S$. The paper \cite{Karnik21} proves such an estimate in the $d=1$ case -- see \eqref{1d_eigdecay}. For a result in higher dimensions, see Corollary \ref{eig_decay:cor}. It is interesting to ask if the exponent $1/d$ in this bound can be improved to some $\alpha > 1/d$. In dimension $d=1$, the bound is known to not be asymptotically sharp, by the work of \cite{Bonami21}.

We are interested in estimating the eigenvalues $\lambda_k(Q,S)$ in the regime when $\mu_d(Q)\cdot \mu_d(S) \rightarrow \infty$. We will force the domains to grow in a controlled (isotropic) manner. Specifically, we shall fix the spatial domain $Q$ and rescale the frequency domain by a factor of $r$, i.e., we shall replace $S$ by $S(r) := rS = \{r x : x \in S\}$. We write $\lambda_k(Q,S(r))$ to denote the sequence of nonzero eigenvalues of the operator $B_{S(r)}P_QB_{S(r)}$. For $\epsilon \in (0,1)$, let $M_\epsilon(r)$ denote the number of eigenvalues $\lambda_k(Q,S(r))$ that are larger than $\epsilon$. If $Q$  and $S$ are measurable sets in $\R^d$, then Landau \cite{Landau75} proves that
\begin{equation}
\label{landau-eig-clustering_eqn}
\lim_{r \rightarrow \infty} r^{-d} M_\epsilon(r)  = (2 \pi)^{-d} \mu_d(Q) \mu_d(S).
\end{equation}
The techniques in \cite{Landau75} do not yield a quantitative convergence rate in \eqref{landau-eig-clustering_eqn}. The primary goal of this paper is establish a quantitative version of \eqref{landau-eig-clustering_eqn} under suitable assumptions on the domains $Q,S$. 

Below in Section \ref{sec:main-results}, we discuss our contributions and compare our results to those of the recent work \cite{MaRoSp23}. In Section \ref{subsec:1dresults}, we will elaborate  on the prior results for time-frequency limiting operators in one dimension.

\subsubsection{Contributions} \label{sec:main-results}  

Let $Q, S\subset \R^d$ be measurable sets with finite and positive $d$-dimensional Lebesgue measures, $\mu_d(Q), \mu_d(S) \in (0,\infty)$. Denote the \emph{$r$-isotropic dilation} of $S$ by $S(r) = rS = \{ r x : x \in S \} \subset \R^d$, for $r > 0$. Consider the positive semidefinite operator $\widetilde{\cT}_{Q,S(r)} := B_{S(r)} P_Q B_{S(r)}$ on $L^2(\R^d)$, which is the SSLO associated to the space and frequency domains $Q$ and  $S(r)$. 
Let $\{\lambda_k(Q,S(r))\}_{k \geq 0}$ in $(0,1]$ be the non-increasing arrangement of the positive eigenvalues of $\widetilde{\cT}_{Q,S(r)}$ (counted with multiplicity). Note that $\{\lambda_k(Q,S(r))\}_{k \geq 0}$ is also the sequence of positive eigenvalues of the operator $\cT_{Q,S(r)} := P_Q B_{S(r)} P_Q$ -- see Remark \ref{remk:SSLO_alt}.

 In our first theorem, we let $Q = [0,1]^d$ be the unit hypercube, and let $S$ be a convex domain in $\R^d$ satisfying a symmetry condition. We say that a convex set $S \subset \R^d$ is \emph{coordinate-wise symmetric} provided that
\begin{equation}\label{eqn:coord_sym}
 (x_1,\cdots,x_d) \in S, \; \sigma = (\sigma_1,\cdots,\sigma_d) \in \{\pm 1\}^d \implies (\sigma_1 x_1,\cdots, \sigma_d x_d) \in S.
\end{equation}
We write $B(x,r) \subset \R^d$ for the Euclidean ball centered at $x \in \R^d$ of radius $r > 0$

\begin{theorem}\label{mainthm:cube_convex}
Let $Q = [0,1]^d$ be the standard unit hypercube, and let $S \subset B(0,1)$ be a compact coordinate-wise symmetric convex set in $\R^d$. 

There exists a dimensional constant $C_d > 0$ (independent of $S$) such that, for any $\epsilon \in (0,1/2)$ and $r \geq 1$, 
\begin{align}\label{eig_clust:eqn1}
&| \# \{ k : \lambda_k(Q,S(r)) > \epsilon\} - (2\pi)^{-d} \mu_d(S(r)) | \leq C_d E_d(\epsilon,r) ,\\
\label{eig_clust:eqn2}
& \# \{ k : \lambda_k(Q,S(r)) \in (\epsilon, 1-\epsilon)\} \leq C_d E_d(\epsilon,r), \\
\label{eig_clust:eqn3}
&E_d(\epsilon,r) := \max \{ r^{d-1} \log(r/\epsilon)^{5/2}, \log(r/\epsilon)^{5d/2} \}.
\end{align}
\end{theorem}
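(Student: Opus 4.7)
The plan is to construct an orthonormal system of wave packets $\{W_{J,K}\}$ in $L^2(Q)$, parameterized by a spatial multi-index $J$ and a frequency multi-index $K \in \Z^d$, such that each $W_{J,K}$ is supported in $Q$ and its Fourier transform is concentrated in a frequency cell $R_K \subset \R^d$ of unit side length. I would then use these wave packets as ``approximate eigenfunctions'' of $\widetilde{\cT}_{Q,S(r)}$ and apply the Courant-Fischer min-max principle to count eigenvalues.

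First, in one dimension I would adopt a smoothed variant of the Coifman-Meyer local sine basis for $L^2[0,1]$: pick a smooth partition of unity on $[0,1]$ with bells of a chosen width $\eta$ and form the associated local sine basis $\{w_k\}_{k \geq 0}$. The smoothness of the bells ensures that $\widehat{w_k}$ decays rapidly outside a frequency interval of length $O(1)$ centered at $\pi k$, with a quantitative tail estimate determined by $\eta$. Taking tensor products then yields the $d$-dimensional orthonormal family $W_{J,K} = w_{j_1,k_1} \otimes \cdots \otimes w_{j_d,k_d}$ supported in $Q = [0,1]^d$, with $\widehat{W_{J,K}}$ concentrated in a unit cube $R_K \subset \R^d$. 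The key quantitative output is a tail bound of the form $\int_{\R^d \setminus (1+\delta)R_K} |\widehat{W_{J,K}}(\xi)|^2\, d\xi \leq (\text{decay}(\delta,\eta))^d$, which I would use to produce the exponents $5/2$ and $5d/2$ in the theorem.

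Next, I would partition the index set $\Z^d$ (truncated to frequency scales $\lesssim r$) into three parts using a boundary buffer of width $\delta \sim \log(r/\epsilon)^{1/2}$: the \emph{interior} set $\cI_{\mathrm{in}}(r) = \{K : R_K \subset rS \text{ with margin } \delta\}$, the \emph{exterior} set $\cI_{\mathrm{out}}(r) = \{K : R_K \cap rS = \emptyset \text{ with margin } \delta\}$, and the \emph{boundary} set $\cI_{\mathrm{bdry}}(r)$ of cells within distance $\delta$ of $\partial(rS)$. For $K \in \cI_{\mathrm{in}}(r)$ the rapid frequency decay of $W_{J,K}$ together with the spatial concentration in $Q$ gives $\langle \widetilde{\cT}_{Q,S(r)} W_{J,K}, W_{J,K}\rangle \geq 1-\epsilon$, and for $K \in \cI_{\mathrm{out}}(r)$ similarly $\leq \epsilon$; this is the approximate eigenfunction property, quantified using the tail bound above (and thus requiring choosing $\eta$ appropriately small). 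Using coordinate-wise symmetry and convexity of $S \subset B(0,1)$ I would bound $|\cI_{\mathrm{bdry}}(r)| \leq C_d (r+\delta)^{d-1}\delta$ via a standard Steiner-type estimate on the boundary layer of a convex body, while $|\cI_{\mathrm{in}}(r)| = (2\pi)^{-d}\mu_d(rS) + O(|\cI_{\mathrm{bdry}}(r)|)$.

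Applying min-max then yields both inequalities: the span of $\{W_{J,K} : K \in \cI_{\mathrm{in}}(r)\}$ certifies at least $|\cI_{\mathrm{in}}(r)|$ eigenvalues above $1-\epsilon$, while the orthogonal complement of the span of $\{W_{J,K} : K \in \cI_{\mathrm{in}} \cup \cI_{\mathrm{bdry}}\}$ (which has codimension at most $|\cI_{\mathrm{in}}| + |\cI_{\mathrm{bdry}}|$) certifies that all remaining eigenvalues lie below $\epsilon$. Combining these bounds gives \eqref{eig_clust:eqn1} and \eqref{eig_clust:eqn2}. The two terms in $E_d(\epsilon,r)$ correspond to the two regimes where the boundary estimate $r^{d-1}\delta$ respectively the ``dimensional overhead'' $\delta^d$ dominates. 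The main obstacle I anticipate is the design of the 1D wave packets with the \emph{sharp} tail decay rate: one must balance the width $\eta$ of the Coifman-Meyer bells (smaller $\eta$ improves frequency tails but degrades spatial localization) against the buffer width $\delta$, and tracking both in the tensorized estimate is what produces the power $5/2$ and the tight dependence of the boundary layer bound on $\epsilon$.
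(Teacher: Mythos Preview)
Your outline has the right spirit (local sine wave packets, tensorize, split indices by whether the frequency cell lies inside/outside $S(r)$, count the boundary layer), but two of the quantitative steps are wrong as stated and would not produce the exponent $5/2$.

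First, the buffer width $\delta\sim\log(r/\epsilon)^{1/2}$ cannot be right. A compactly supported bell can have Fourier decay at best like $\exp(-c|\xi|^{\alpha})$ with $\alpha<1$ (Paley--Wiener forbids $\alpha=1$), so to push the Fourier tail below a threshold $\epsilon$ you need a frequency buffer of order $\log(1/\epsilon)^{1/\alpha}>\log(1/\epsilon)$, never $\log^{1/2}$. The paper fixes $\alpha=2/3$ by taking the bell in the Gevrey class $G^{3/2}$, which gives buffer $\eta\sim\log(1/\epsilon)^{3/2}$; this choice of smoothness class is the mechanism that produces a specific power of $\log$, and your proposal does not identify it. (Your remark that ``smaller $\eta$ improves frequency tails but degrades spatial localization'' has the uncertainty principle reversed, which is a symptom of the same confusion.)

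Second, a uniform-width bell partition of $[0,1]$ does not give the basis you need. Bells that are both supported in $[0,1]$ and uniformly smooth force a \emph{multi-scale} (Whitney) decomposition: intervals shrinking dyadically toward the endpoints $0$ and $1$. In the paper the index set is $\cW^{\otimes d}\times\N^d$ with $\cW$ a Whitney partition of $(0,1)$; the frequency cell attached to $(\bL,\bk)$ has sidelengths $\sim 1/\delta_{L_j}$, so the cells are \emph{not} unit cubes. The boundary count is then a sum over $\bL$ with $\min_j\delta_{L_j}>\delta/r$ (contributing a factor $\log(r/\delta)$ per coordinate) of the number of rescaled lattice points near $\partial(\tau_{\bL}S(r))$ (contributing the buffer factor $\eta\sim\log^{3/2}$). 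It is this combination, $\log\cdot\log^{3/2}=\log^{5/2}$, that yields the exponent $5/2$; your single-scale picture with unit frequency cells has no analogue of the Whitney $\log$ factor. Finally, the min-max step as you state it is not enough: knowing $\langle \cT_r W_{J,K},W_{J,K}\rangle\geq 1-\epsilon$ for each interior $K$ does not control the Rayleigh quotient on the whole span. The paper instead proves a Hilbert--Schmidt bound $\sum_{low}\|(I-\cT_r)\psi_\nu\|^2+\sum_{hi}\|\cT_r\psi_\nu\|^2\leq\epsilon^2$ and passes through an operator-norm inequality (Lemma \ref{func_anal:lem}).
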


The bounds \eqref{eig_clust:eqn1} and \eqref{eig_clust:eqn2} describe the clustering behavior of SSLO eigenvalues near $0$ and $1$. Observe that $(2\pi)^{-d} \mu_d(S(r)) = C_d \mu_d(S) r^d$, while the term on the right-hand side of \eqref{eig_clust:eqn1} and \eqref{eig_clust:eqn2} is of lower order $\sim e(r) := r^{d-1} \log(r)^{5/2}$ for fixed $\epsilon>0$ and large $r$. Thus, Theorem \ref{mainthm:cube_convex} implies that the first $C_d \mu(S) r^d - O(e(r))$ many eigenvalues $\lambda_k(Q,S(r))$ will be very close to $1$, followed by at most $O(e(r))$ many eigenvalues of intermediate size, and the remaining eigenvalues decay to $0$ at a near-exponential rate (see Corollary \ref{eig_decay:cor} for a  statement about the decay rate). In particular, \eqref{eig_clust:eqn1} gives a quantitative version of Landau's formula \eqref{landau-eig-clustering_eqn}.

The main idea in the proof of Theorem \ref{mainthm:cube_convex} is to construct an ``approximate eigenbasis'' for the operator $\cT_r = \mathcal{T}_{Q,S(r)}$. More specifically, we shall define a family of ``wave packet'' functions $\{\psi_\nu \}$, which are highly localized in both the spatial and Fourier domains, and which form an orthonormal basis for $L^2(Q) = \ker(\cT_r)^\perp$, with the property that either $\| \cT_r \psi_\nu\|$ or $\| \cT_r \psi_\nu - \psi_\nu \|$ is small for all but a few indices $\nu$. We construct this orthonormal wave packet basis in Section \ref{sec:lsb}. We present a key functional analysis result in Lemma \ref{func_anal:lem}, and we prove the essential properties of the basis $\{\psi_\nu\}$ in Proposition \ref{main:prop}. Using these ingredients, we complete the proof of Theorem \ref{mainthm:cube_convex} in Section  \ref{proof-of-mainthm}.

\begin{remark}\label{imp:rem} 
The  construction of the wave packet basis involves the choice of a cutoff function  $v(x)$  in \eqref{c0}.  By choosing the cutoff function instead as  $v_m(x) := e^{-(1-x^2)^{-m}}$ for $x\in (-1,1)$ and zero elsewhere,  one  can improve  the exponent $5/2$ in \eqref{eig_clust:eqn1}-\eqref{eig_clust:eqn3} to $2+\alpha$,  for $\alpha = 1/m$, and any integer $m\geq 2$, with the constant $C_d$ replaced by a constant depending on $d$ and $m$. For more details, see Remark \ref{imp:detailed rem}.
 \end{remark}

\begin{remark}
    \label{comp:rem}
After posting this work to the arXiv, we learned about the paper \cite{MaRoSp23} that was released in the same week as ours. There, the authors study the distribution of the eigenvalues $\lambda_k(Q,S)$ for domains $Q$ and $S$ in $\R^d$, such that the boundaries $\partial Q$ and $\partial S$ satisfy an Ahlfors regularity condition, and their Hausdorff $(d-1)$-measures satisfy  $|\partial Q| \cdot |\partial S| \geq 1$. Under this assumption, the authors prove an estimate of the following form: For every $\alpha \in (0,1/2]$ there exists $A_{\alpha, d} \geq 1$ such that for all $\epsilon \in (0,1/2)$,
\begin{equation}\label{Romero_bd}
\# \{k \in \N : \lambda_k(Q,S) \in (\epsilon,1-\epsilon)\} \leq A_{\alpha,d} \frac{|\partial Q|}{\kappa_{\partial Q}} \frac{|\partial S|}{\kappa_{\partial S}} \log \left( \frac{|\partial Q| |\partial S|}{\kappa_{\partial Q} \epsilon}\right)^{2d(1+\alpha) + 1}.
\end{equation}
The authors also prove an estimate on the number of eigenvalues larger than $\epsilon$, which they show to be proportional to $\mu_d(Q)\mu_d(S)$ up to an additive error term bounded by the right-hand side of \eqref{Romero_bd}. Here, $\kappa_{\partial Q}$ and $\kappa_{\partial S}$ are constants that describe the Ahlfors regularity of the boundaries of $Q$ and $S$. In particular, if $Q$ and $S$ are fixed domains, and one replaces $S$ by its isotropic dilation $S(r)$ in \eqref{Romero_bd}, one gets the bound $\# \{ k : \lambda_k(Q,S(r)) \in (\epsilon, 1-\epsilon)\} = O_{\alpha,d}(r^{d-1} \log(r/\epsilon)^{2d(1+\alpha) + 1})$, where the constant in the $O_{\alpha,d}$ notation depends on $\alpha$, $d$, $Q$, and $S$. For comparison, when $Q = [0,1]^d$ and $S$ is convex and coordinate-wise symmetric, Theorem \ref{mainthm:cube_convex} gives an upper bound by $O_d(\max \{ r^{d-1} \log(r/\epsilon)^{5/2}, \log(r/\epsilon)^{(5/2)d}\})$, which is  sharper in the regime $\epsilon \sim r^{-s}$ and $r$ large.
%This improvement can be traced to the favorable geometric inequalities for counting lattice points in convex bodies, which we develop in Section \ref{sec:lattice}, and apply within the proof of Proposition \ref{main:prop}.
\end{remark}

The bounds in Theorem \ref{mainthm:cube_convex} can be somewhat improved when the space and frequency limiting domains are cubes in $\R^d$ with sides parallel to the coordinate axes. This is  our  next  result. 
 
%Furthermore,  we  show  that  the estimate  of  the number of   eigenvalues in    $N_\epsilon$, related to the ``transition" region $(\epsilon, 1-\epsilon)$,   is the higher dimensional version of the  result in $1$D in  \cite{Karnik21}.  For convenience,  we have collected the  result of \cite{Karnik21}   in this paper  in  Theorem \ref{Karnik}.  

\begin{theorem}\label{tensor:propA}

Let $Q = [0,1]^d$ and $S=[-1,1]^d$, so that $S(r) = [-r,r]^d$. Then there exists a dimensional constant $C_d$, such that for any $r \geq 2 \pi$
and $\epsilon \in (0,1/2)$,
\begin{equation}\label{M_eps1:eqn}
\begin{aligned}
&|\# \{ k : \lambda_k([0,1]^d,[-r,r]^d) > \epsilon \} - (r/\pi)^d | \leq C_d  B_d(\epsilon,r),\\
&\# \{ k : \lambda_k([0,1]^d,[-r,r]^d) \in (\epsilon, 1-\epsilon)\} \leq C_dB_d(\epsilon,r),\\
&B_d(\epsilon,r) := \max \{ r^{d-1} \log(r) \log(1/\epsilon), (\log(r) \log(1/\epsilon))^d\}.
\end{aligned}
\end{equation}
\end{theorem}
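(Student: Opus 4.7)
The plan is to exploit the tensor product structure that is special to the cube-cube case. Since $Q = [0,1]^d$ and $S(r) = [-r,r]^d$ factor as $d$-fold Cartesian products of intervals, the projections decompose as
\begin{equation*}
P_{[0,1]^d} = P_{[0,1]}^{\otimes d}, \qquad B_{[-r,r]^d} = B_{[-r,r]}^{\otimes d}
\end{equation*}
under the natural unitary identification $L^2(\R^d) \cong L^2(\R)^{\otimes d}$. Consequently $\cT_{Q,S(r)} = \cT_r^{\otimes d}$ where $\cT_r := P_{[0,1]} B_{[-r,r]} P_{[0,1]}$. If $\{\mu_j(r)\}_{j \geq 0}$ denotes the non-increasing sequence of positive eigenvalues of the one-dimensional operator $\cT_r$, then the positive eigenvalues of $\cT_{Q,S(r)}$ (counted with multiplicity) are precisely the products $\mu_{\mathbf{j}}(r) := \prod_{i=1}^d \mu_{j_i}(r)$ indexed by $\mathbf{j} = (j_1,\dots,j_d) \in \N_0^d$.

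Next I would invoke the sharp one-dimensional result of Karnik--Romberg--Davenport \cite{Karnik21} (which is recalled in Section \ref{subsec:1dresults}): there is an absolute constant $C$ such that, writing $N(\delta) := \#\{j : \mu_j(r) > \delta\}$ for $\delta \in (0,1/2)$ and $r \geq 2\pi$,
\begin{equation*}
|N(\delta) - r/\pi| \leq C \log(r)\log(1/\delta), \qquad N(\delta) - N(1-\delta) \leq C \log(r)\log(1/\delta).
\end{equation*}
Using the elementary sandwich: $\prod_i \mu_{j_i} > \epsilon$ forces $\mu_{j_i} > \epsilon$ for every $i$, while $\mu_{j_i} > 1 - \epsilon/d$ for every $i$ forces $\prod_i \mu_{j_i} > (1-\epsilon/d)^d \geq 1-\epsilon$. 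This pincer gives
\begin{equation*}
N(1 - \epsilon/d)^d \;\leq\; \#\{\mathbf{j} : \mu_{\mathbf{j}}(r) > \epsilon\} \;\leq\; N(\epsilon)^d,
\end{equation*}
and the same pair of bounds controls $\#\{\mathbf{j} : \mu_{\mathbf{j}}(r) > 1-\epsilon\}$ from both sides.

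To conclude, expand $N(\epsilon)^d$ and $N(1-\epsilon/d)^d$ via the binomial theorem around the leading term $(r/\pi)^d$. Writing $N(\epsilon) = r/\pi + E$ with $|E| \leq C\log(r)\log(1/\epsilon)$, the binomial expansion yields
\begin{equation*}
N(\epsilon)^d - (r/\pi)^d = \sum_{k=1}^{d} \binom{d}{k} (r/\pi)^{d-k} E^k = O_d\bigl( \max\{r^{d-1} \log(r)\log(1/\epsilon),\, (\log(r)\log(1/\epsilon))^d\} \bigr),
\end{equation*}
which is exactly $O_d(B_d(\epsilon,r))$; the same estimate applies to $N(1-\epsilon/d)^d - (r/\pi)^d$, establishing the first inequality. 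For the second inequality,
\begin{equation*}
\#\{\mathbf{j} : \mu_{\mathbf{j}}(r) \in (\epsilon, 1-\epsilon)\} \leq N(\epsilon)^d - N(1-\epsilon/d)^d \leq d \cdot N(\epsilon)^{d-1} \bigl( N(\epsilon) - N(1-\epsilon/d) \bigr),
\end{equation*}
and the second one-dimensional bound gives $N(\epsilon) - N(1-\epsilon/d) = O(\log(r)\log(1/\epsilon))$, producing an $O_d(B_d(\epsilon,r))$ bound after combining with $N(\epsilon) \leq r/\pi + O(\log(r)\log(1/\epsilon))$.

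No step presents a serious obstacle once the tensorization is in hand; the entire argument is essentially a clean combinatorial reduction of Theorem \ref{tensor:propA} to the one-dimensional Karnik--Romberg--Davenport bounds. The only item requiring care is the bookkeeping in the binomial expansion, which is what produces the two competing regimes $r^{d-1}\log(r)\log(1/\epsilon)$ and $(\log(r)\log(1/\epsilon))^d$ in the definition of $B_d(\epsilon,r)$.
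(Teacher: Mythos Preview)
Your proposal is correct and follows essentially the same route as the paper: tensorize so that the eigenvalues are products $\prod_i \mu_{j_i}$, invoke the one-dimensional Karnik--Romberg--Davenport bound (combined with Landau's $\lambda_{N-1}\ge 1/2\ge\lambda_{N+1}$ to obtain $|N(\gamma)-r/\pi|=O(\log r\,\log(1/\gamma(1-\gamma)))$), sandwich $M_\epsilon$ between two $d$-th powers, and expand binomially. The paper uses the threshold $\epsilon^{1/d}$ rather than your $1-\epsilon/d$ in the lower sandwich bound, and obtains the $N_\epsilon$ estimate by applying the $M_\epsilon$ bound at $\epsilon$ and at $1-\epsilon$ rather than via your $a^d-b^d\le d\,a^{d-1}(a-b)$ factorization, but these are purely cosmetic differences.
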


To prove Theorem \ref{tensor:propA}, we note that  the operator $\widetilde{\cT}_{Q,S}$ on $L^2(\R^d)$ is the $d$-fold composition of one-dimensional  time-frequency  limiting operators acting on each variable. Estimates on the eigenvalues of  $\widetilde{\cT}_{Q,S}$ are easily derived from the corresponding estimates for one-dimensional operators, using the results of \cite{Karnik21} (which we recall in Section \ref{subsec:1dresults}). Details for the proof are given in Section \ref{proof-of-propA}. 

%DELETING THIS COMMENT BECAUSE IT IS NOT TRUE THAT N_epsilon DEPENDS APPROXIMATELY ON W^d
%The results  in \eqref{M_eps1:eqn}  show  that the size of the eigenvalue  sets  in $(\epsilon, 1)$ and $(\epsilon, 1-\epsilon), i.e., M_\epsilon$ and $N_\epsilon$ respectively, depend approximately on $W^d$. 
% \footnote{It is not true that both quantities depend approximately linearly on $W^d$.}
%
%\footnote{\color{blue}  We observed that     $|M_\epsilon - N_\epsilon -(W/\pi)^d|\leq 2C_dB_d$. - I forgot what we talked about this difference. Are we interested in the difference when $\epsilon \to 1/2$? because when  $\epsilon$ is near zero, then we already know the answer, and it is a boring case. {\color{red} I also forgot why we spoke about the difference. Maybe it is not important.}}

%\footnote{questions/notes to myself: I am curious,  are the eigenfunctions of SSLO also orthogonal on $\R^d$, if we choose special $S$? Check the disk case. \\
%We need to clarify that when we are talking about the eigenvalues, we are considering the operator on $B(S)$. I feel we did not clarify it anywhere. {\color{red} The eigenfunctions are orthogonal in $L^2(\R^d)$, because the operator is self-adjoint mapping from $L^2(\R^d) \rightarrow L^2(\R^d)$. Why do you think that the eigenfunctions belong to $B(S)$?}  }

Our final result is a corollary of Theorem \ref{tensor:propA}. It gives a bound on the decay rate of the eigenvalues $\lambda_k(Q,S)$ as $k \rightarrow \infty$. We present the proof in Section \ref{decay-rate:eign}. 

Below, we write $\diam_\infty(A)$ for the diameter of a set $A \subset \R^d$ with respect to the $\ell^\infty$ metric on $\R^d$.

\begin{corollary}
\label{eig_decay:cor}
Let $Q$ and $S$ be compact sets and let $\Delta = \diam_\infty(Q) \cdot \diam_\infty(S) \in (0,\infty)$.  Then
\[
\lambda_{k}(Q,S) \leq C_d \exp \left(-c(\Delta) k^{1/d} \right), \;\; \mbox{for } k \geq 1,
\]
for constants $C_d > 0$, determined by $d$, and $c(\Delta) > 0$, determined by $d$ and $\Delta$.
\end{corollary}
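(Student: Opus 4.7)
My plan is to reduce the general compact case to the cube-cube case handled by Theorem \ref{tensor:propA}, using three natural invariance/monotonicity properties of the eigenvalues $\lambda_k(Q,S)$, and then select an $\epsilon$ in Theorem \ref{tensor:propA} of the form $\exp(-c(\Delta) k^{1/d})$.

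First I would record three properties of the eigenvalue sequence. (i) \emph{Translation invariance}: conjugating $B_S P_Q B_S$ by a translation (for spatial shifts) or a modulation (for frequency shifts) yields $\lambda_k(Q+x_0, S+\xi_0) = \lambda_k(Q,S)$. (ii) \emph{Dilation invariance}: the unitary $V_t f(x) = t^{d/2} f(tx)$ satisfies $V_t P_Q V_t^{-1} = P_{Q/t}$ and $V_t B_S V_t^{-1} = B_{tS}$, so $\lambda_k(Q,S) = \lambda_k(Q/t, tS)$ for all $t>0$. (iii) \emph{Monotonicity}: if $Q \subset Q'$ and $S \subset S'$, then $\mathcal{B}(S) \subset \mathcal{B}(S')$ and $\|\psi\|_{L^2(Q)} \le \|\psi\|_{L^2(Q')}$, so the max-min characterization of $\lambda_k$ gives $\lambda_k(Q,S) \le \lambda_k(Q',S')$. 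Writing $a = \diam_\infty(Q)$, $b = \diam_\infty(S)$, property (i) lets me assume $Q \subset [0,a]^d$ and $S \subset [-b/2, b/2]^d$; property (iii) gives $\lambda_k(Q,S) \le \lambda_k([0,a]^d, [-b/2,b/2]^d)$; and property (ii) with $t=a$ rewrites the right-hand side as $\lambda_k([0,1]^d, [-ab/2, ab/2]^d)$. Finally, enlarging the frequency cube via (iii) to ensure we meet the hypothesis $r \geq 2\pi$ of Theorem \ref{tensor:propA}, we arrive at $\lambda_k(Q,S) \leq \lambda_k([0,1]^d, [-r,r]^d)$ with $r := \max(\Delta/2, 2\pi)$, a function of $\Delta$ and $d$ only.

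Next I would apply Theorem \ref{tensor:propA} in the form: if $k \geq (r/\pi)^d + C_d B_d(\epsilon, r)$, then $\lambda_k([0,1]^d, [-r,r]^d) \leq \epsilon$. Setting $\epsilon := \exp(-c(\Delta) k^{1/d})$ so that $\log(1/\epsilon) = c(\Delta) k^{1/d}$, the definition of $B_d$ becomes
\[
C_d B_d(\epsilon,r) = C_d \max\bigl\{\, c(\Delta) \log(r)\, r^{d-1} k^{1/d},\ (c(\Delta) \log r)^d\, k \,\bigr\}.
\]
The main obstacle is the second term in the max: it is linear in $k$, so for the threshold condition to hold for all large $k$, its coefficient must be bounded strictly below $1$. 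I would therefore define $c(\Delta) := c_d / \log r$, where the dimensional constant $c_d > 0$ is chosen small enough that $C_d c_d^d \leq 1/2$. With this choice the threshold inequality reduces to
\[
k/2 \;\geq\; (r/\pi)^d + C_d c_d\, r^{d-1} k^{1/d},
\]
which holds for all $k \geq k_0(\Delta,d)$, some threshold determined by $\Delta$ and $d$. For such $k$, we conclude $\lambda_k(Q,S) \leq \exp(-c(\Delta) k^{1/d})$.

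To finish, for $1 \leq k \leq k_0$ I would use the trivial bound $\lambda_k \leq 1$, absorbing the finitely many small-$k$ cases by either enlarging the dimensional constant $C_d$ at the front of the bound or further shrinking $c(\Delta)$ (so that $c(\Delta) k_0^{1/d} \leq \log C_d$), yielding $\lambda_k \leq C_d \exp(-c(\Delta) k^{1/d})$ for all $k \geq 1$. The only real content beyond routine bookkeeping is the balancing in the previous paragraph: the fact that the worst term in $B_d(\epsilon,r)$ scales like $k$ (not $k^{\alpha}$ for $\alpha<1$) forces the $\log r$ factor into the definition of $c(\Delta)$, which is precisely how $\Delta$ enters the exponent.
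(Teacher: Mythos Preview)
Your proof is correct and follows essentially the same route as the paper: reduce to the cube--cube case via translation/dilation invariance and monotonicity of the eigenvalues (the paper cites these as Landau's observations, Lemma \ref{observation:lem}), then invert the counting bound of Theorem \ref{tensor:propA}. The only difference is in the inversion: the paper crudely bounds $B_d(\epsilon,r) \lesssim \Delta^d \log(1/\epsilon)^d$ (using $\log\Delta \le \Delta$) and obtains $c(\Delta)\sim c_d/\Delta$, whereas your more careful balancing of the two terms in $B_d$ actually yields the sharper $c(\Delta)\sim c_d/\log\Delta$.
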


%\begin{remark} \label{rem:dilation-translatoin-inva} 
%Notice that by rescaling and translation of $(Q,S) \mapsto (\Delta Q, \Delta^{-1} S+\alpha)$, the operators $\widetilde{\cT}_{Q,S}$ and $\widetilde{\cT}_{\Delta Q, \Delta^{-1}S+\alpha}$ are similar and the set of their eigenvalues are same. Therefore,  without loss of generality,  in Corollary  \ref{eig_decay:cor}  we may assume that one of the sets has $\ell^\infty$-diameter $\leq 1$.
%\end{remark}

\subsubsection{Related work on time-frequency limiting operators}
\label{subsec:1dresults}

Let $I=[0,1]$ and $J=[-W,W]$, $W>0$.  The %eigenfunctions of the 
eigenvalues of the time-frequency limiting operator $\widetilde{\cT}_W:=\widetilde{\cT}_{I,J} = B_J P_I B_J$ will be denoted $\lambda_k(W)= \lambda_k(I,J) \in (0,1]$, $k \geq 0$, and are referred to as PSWF eigenvalues.  The operator $\widetilde{\cT}_W$ is a compact integral operator with square-integrable continuous kernel. As the eigenvalues of a compact operator, $\lambda_k(W)$ decreases monotonically to $0$ as $k \rightarrow \infty$ for fixed $W > 0$. By Mercer's theorem, $\sum_{k=0}^\infty  \lambda_k(W) = \mathrm{tr}(\widetilde{\cT}_{W})= W/\pi$; see \cite{Landau75}.
 
What is more interesting is that the eigenvalues $\lambda_k(W)$ are clustered near $0$ or $1$, in the following sense: For any $\epsilon \in (0,1/2)$, slightly fewer than $W/\pi$ of the eigenvalues lie in $[1- \epsilon,1]$, far fewer eigenvalues lie in $(\epsilon, 1 - \epsilon)$, and the rest lie in $(0, \epsilon]$. More precisely, Landau \cite{Landau93} proved the following: For $W \geq 2\pi$, let $N = \lfloor W/\pi \rfloor$ be the integer part of $W/\pi$.  Then
 \begin{align}\label{landau-half} 
 \lambda_{N-1}(W) \geq 1/2 \geq \lambda_{N+1}(W).
 \end{align} 
Further, Landau-Widom \cite{LandauWidom80}  proved that  for fixed $\epsilon \in (0,1/2)$, as $W \rightarrow \infty$,
\begin{align}\label{landau-widom}
\# \{ k  \geq 0 : \lambda_k(W) \in (\epsilon, 1 - \epsilon) \} = \frac{2}{\pi^2} \log(W) \log\left( \frac{1}{\epsilon} - 1 \right) + o ( \log(W)).
\end{align} 
We point out that \eqref{landau-widom} is an asymptotic result, not a quantitative one. Indeed, the error term  $o(\log W)$ is not explicitly bounded in \cite{LandauWidom80}. Therefore, from \eqref{landau-widom} it is unclear how the quantity $\# \{ k : \lambda_k(W) \in (\epsilon, 1 - \epsilon) \}$ grows as $\epsilon \rightarrow 0$. In particular, \eqref{landau-widom} is helpless for the task of proving the decay of the eigenvalues $\lambda_k(W)$ as $k \rightarrow \infty$.

We refer to $\{k : \lambda_k(W) \in (\epsilon, 1 - \epsilon) \}$ as the ``transition region'' in the spectrum of  $\widetilde{\cT}_W$. Quantitative upper bounds on the size of the transition region were proven by Osipov \cite{Osipov13} and the first-named author \cite{Israel15}. Subsequently,  Karnik et al. \cite{Karnik21} obtained improved quantitative bounds matching the asymptotic formula \eqref{landau-widom}, valid for any $\epsilon \in (0,1/2)$ and $W > 0$. Bonami et al.  \cite{Bonami21} prove estimates which are  stronger than those in \cite{Karnik21} when $\epsilon$ is  smaller than a negative power of $W$.

We now state a remarkable theorem of \cite{Karnik21}, which gives a quantitative upper bound on the width of the transition region achieving the same asymptotic rate as \eqref{landau-widom}:
\begin{theorem}[\cite{Karnik21}, Theorem 3]
\label{Karnik} For any $W>0$ and $\epsilon\in (0,1/2)$,
 \begin{align}\label{Romberg-Thm:3} 
\# \{ k : \lambda_k(W) \in (\epsilon, 1 - \epsilon) \} \leq \frac{2}{\pi^2} \log \left( \frac{50W}{\pi}+25 \right) \log \left( \frac{5}{\epsilon(1-\epsilon)} \right) + 7.
 \end{align}
\end{theorem}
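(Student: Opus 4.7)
My plan is to exploit the variational (min--max) characterization of the eigenvalues and reduce the claim to two one-sided estimates: (a) a \emph{lower} bound on the count $\#\{k : \lambda_k(W) > 1-\epsilon\}$, witnessing that many eigenvalues sit near $1$; and (b) an \emph{upper} bound on $\#\{k : \lambda_k(W) > \epsilon\}$, controlling how many eigenvalues exceed $\epsilon$. Subtracting (b) from (a) with $\epsilon$ replaced by $1-\epsilon$ in (a) gives a bound on the transition count $\#\{k : \lambda_k(W) \in (\epsilon, 1-\epsilon)\}$ in which the combined logarithmic factor $\log(1/\epsilon) + \log(1/(1-\epsilon)) = \log(1/(\epsilon(1-\epsilon)))$ matches, up to an additive constant, the factor $\log(5/(\epsilon(1-\epsilon)))$ appearing in \eqref{Romberg-Thm:3}.

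For step (a), I would build a subspace $V \subset \cB(J)$ of dimension at least $\lfloor W/\pi \rfloor - a \log(W)\log(1/(1-\epsilon))$ on which $\langle P_I f, f \rangle \geq (1-\epsilon) \|f\|^2$. The natural candidate is the span of a family of bandlimited wave packets --- in the spirit of the construction outlined for Proposition \ref{main:prop} but specialized to $d=1$ --- obtained from a smooth bump of unit bandwidth, translated by $\pi/W$ and modulated to tile the interval $I$ with a safety buffer of width $\sim W^{-1}\log(W)\log(1/(1-\epsilon))$ at each endpoint. Exponential decay of the wave packets outside their nominal support gives the Rayleigh quotient bound, and the min--max principle then yields (a).

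For step (b), I would work dually, constructing a finite-rank operator $R: L^2(I) \to L^2(I)$ of rank at most $\lfloor W/\pi\rfloor + a \log(W)\log(1/\epsilon)$ satisfying $\|P_I B_J P_I - R\|_{\mathrm{op}} \leq \epsilon$; a pigeonhole comparison of $R$ with the eigenfunctions of $\mathcal{T}_W = P_I B_J P_I$ (which shares the eigenvalues $\lambda_k(W)$ with $\widetilde{\cT}_W$) immediately delivers (b). To build $R$ I would truncate the Galerkin expansion of $P_I B_J P_I$ in a basis of modulated Chebyshev-weighted polynomials on $I$; the key estimate is that bandlimiting a polynomial of degree $K$ on $[0,1]$ with Fourier content concentrated in $[-W,W]$ incurs an $L^2$ error of order $\exp(-cK/\log W)$, which forces only $K \sim \log(W)\log(1/\epsilon)$ additional basis vectors beyond the $\sim W/\pi$ dictated by the tracial identity $\mathrm{tr}(\widetilde{\cT}_W) = W/\pi$.

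The principal obstacle I expect is not the qualitative structure of (a) and (b) but the \emph{sharp numerical constants}. Matching the Landau--Widom prefactor $2/\pi^2$ in \eqref{landau-widom} requires the buffer in (a) and the truncation in (b) to be calibrated to the spectral density of a Toeplitz operator with discontinuous symbol $\chi_{[-1,1]}$; this is precisely the regime where Szeg\H{o}--Widom theory produces a $\log W$ rate (rather than a power of $W$) from the corners of the symbol. A soft Paley--Wiener or Bernstein argument loses a multiplicative $\log W$ or $\log\log W$ factor at this stage, so the proof must instead proceed through an explicit family of discrete prolate spheroidal vectors, combined with the sharp eigenvalue decay $\lambda_k(W) \lesssim \exp(-c k/\log W)$ for $k > W/\pi$. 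Calibrating the buffers in (a) and (b) at the \emph{same} rate $\log(W)\log(1/\epsilon)$, so that the additive slack collapses to the constant $7$ in \eqref{Romberg-Thm:3}, is the delicate step where I would follow \cite{Karnik21}.
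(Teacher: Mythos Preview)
This statement is not proved in the paper at all: it is Theorem~3 of \cite{Karnik21}, quoted verbatim as a black box and then \emph{used} (in the proof of Theorem~\ref{tensor:propA} via Proposition~\ref{1d:prop}) rather than re-derived. So there is no ``paper's own proof'' to compare your proposal against.

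On the merits of your sketch: the two-sided min--max scheme you describe (build a subspace witnessing many eigenvalues near $1$; build a low-rank approximant controlling how many exceed $\epsilon$; subtract) is exactly the template behind results of this type, and would certainly yield \emph{some} bound of the form $C\log(W)\log(1/(\epsilon(1-\epsilon)))+C'$. However, your proposed witnesses---wave packets with a buffer at the endpoints, and a Galerkin truncation in modulated Chebyshev polynomials---are the ingredients of \cite{Israel15} and of the present paper's higher-dimensional argument, and these do \emph{not} produce the sharp leading constant $2/\pi^2$. The proof in \cite{Karnik21} is genuinely different: it passes to the discrete prolate matrix, recognizes it as a Cauchy-like (displacement-rank-$2$) matrix, and invokes the sharp Zolotarev rational-approximation bounds of Beckermann--Townsend to control the singular values in the transition region. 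The $2/\pi^2$ comes directly from the asymptotics of the Zolotarev numbers, not from any Paley--Wiener or wave-packet tail estimate. Your final paragraph correctly identifies the constant as the crux, but the mechanism you gesture at (``Szeg\H{o}--Widom theory'', ``explicit family of discrete prolate spheroidal vectors'') is not what \cite{Karnik21} actually does; without the Zolotarev input you will not recover $2/\pi^2$ or the additive $+7$.
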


We shall make use of Theorem \ref{Karnik} later, in the proof of Theorem \ref{tensor:propA}.
As noted in \cite{Karnik21}, Theorem \ref{Karnik} implies the exponential decay of $\lambda_k(W)$ for $k \geq \lceil W/\pi \rceil$: 
\begin{equation}\label{1d_eigdecay}
\lambda_{k}(W) \leq 10 \exp \left[ - \frac{k - \lceil W/\pi \rceil - 6}{c_1 \log(W + c_2)}  \right] \qquad k \geq \lceil W/\pi \rceil,
\end{equation}
where the constants $c_1, c_2 > 0$ are explicit, and $\lceil x \rceil$ is the ceiling function of $x$.

\section{Notations and Preliminaries} \label{sec:prelim}

We write $\N$ for the set of non-negative integers, i.e., $\N = \{ 0,1,2,\cdots \}$, and $\#(A)$ for the cardinality of a set $A$.
We write $|x|=|x|_2$ for the Euclidean $\ell^2$-norm of $x$ in $\R^d$, and $|x|_\infty = \max\{ |x_i| : 1 \leq i \leq d \}$  for the $\ell^\infty$-norm of $x$.
We write $\mu_d$ for Lebesgue measure on $\R^d$, and $|I|$ for the length of an interval $I \subset \R$. 
We write $\| f \|_2$ for the $L^2$ norm of a measurable function $f$, and $\| f \|_\infty$ for the $L^\infty$ norm of $f$.
A box $\bL \subset \R^d$ is the $d$-fold Cartesian product of intervals $L_j$, i.e., $\bL = \prod_{j=1}^d L_j$. An axis-parallel hypercube  $Q$ is a box such that all the intervals have equal length,  i.e., $Q = \prod_{j=1}^d I_j$, with $|I_j| = |I_{k}|$ for all $j,k$.
We write $\chi_A(x)$ for the indicator function of a set $A \subset \R^d$. 
%We define $\dist_\infty(A,B) := \inf_{x \in A, y\in B} | x - y |_{\infty}$ for  $A \subset \R^d$   and $B \subset \R^d$.  

%On $L^2(\R^d)$, we define the Fourier transform $\mathcal{F}[f](\xi) = \int_{\R^d} f(x) e^{-i x \cdot \xi} dx$, and  the inverse Fourier transform $\mathcal{F}^{-1}[g](x) = \frac{1}{(2 \pi)^d} \int_{\R^d} g(\xi) e^{i x \cdot \xi} d \xi$. We adopt the  shorthand $\widehat{f} = \mathcal{F}(f)$ for the Fourier transform of $f$. 

%Given a compact set $S \subset \R^d$ with positive and finite measure, a function $f \in L^2(\R^d)$ is said to be $S$-{\it bandlimited} provided that $\widehat{f}(\xi) = 0$ for almost every  $\xi \in \R^d \setminus S$. We write $\cB(S)$ for the space of all $S$-bandlimited functions in $L^2(\R^d)$. These spaces are also known as Paley-Wiener spaces.    

%***OLD VERSION*** Let $T : \cH \rightarrow \cH$ be the the Hilbert-Schmidt on the Hilbert space $\mathcal H$. The Hilbert-Schmidt norm  is   $\| T \|_{HS} = \left( \sum_{j=1}^\infty \| T e_j \|^2 \right)^{1/2}$, where $\{ e_j \}$ is any orthonormal basis for  the   space $\cH$. Here, the norm  $\| T e_j \|$ is in $\mathcal H$. 
%By $\|T\|$ we denote the  operator norm of $T$.

Let $\cH$ be a Hilbert space, with norm $\| \cdot \|$. 
Recall, the Hilbert-Schmidt norm of a linear operator $T : \cH \rightarrow \cH$ is $\| T \|_{HS} = \left( \sum_{j=1}^\infty \| T e_j \|^2 \right)^{1/2}$, where $\{ e_j \}$ is any orthonormal basis for the space $\cH$, and the value of the Hilbert-Schmidt norm does not depend on the choice of the orthonormal basis.  
By $\|T\|$ we denote the  operator norm of $T$.

Write $B(x,r)$ for the Euclidean ball in $\R^d$ centered at $x$ with radius $r$. 
Given  $x \in \R^d$ and $K \subset \R^d$, 
we write $x + K$ for the set $\{ x + y : y \in K \}$. Given $K,K' \subset \R^d$, we write $K \oplus K'$ for the Minkowski sum of $K$ and $K'$, and  $K \ominus K'$ for the Minkowski difference of $K$ and $K'$, defined by:
\[
\begin{aligned}
&K \oplus K' = \{ y + y' : y \in K, \; y' \in K'\} \\
&K \ominus K' =  \{ c \in \R^d : c + K' \subset K \} = \bigcap_{y' \in K'} (-y' + K).  
\end{aligned}
\]
If $K,K'$ are convex then the sum $K \oplus K'$ and difference $K \ominus K'$ are convex as well. (On the second point, recall that the intersection of convex sets is convex.)

When $x=0$, we write $B(r) = B(0,r)$.  Then it is easy to see that 
for any closed subset $K \subset \R^d$,
\[
\begin{aligned}
&K \ominus B(r) = \{ x \in \R^d : x \in K, \;  d(x, \partial K) \geq r \}, \\
&K \oplus B(r) = \{ x \in \R^d : d(x,K) \leq r \}.
\end{aligned}
\]
The set $K \oplus B(r)$ is called the {\it (closed) $r$-neighborhood}  of $K$. Note that $K \oplus B(r) \supset K$ and $K \ominus B(r) \subset K$ for any closed set $K$. If $K$ is convex, then  $K \ominus B(r)$ and $K \oplus B(r)$ are also convex.

\subsection{Basic properties of SSLOs} 
Let $Q$ and $S$ be measurable subsets of $\R^d$, with positive and finite $d$-dimensional Lebesgue measure. We denote $\cB(S) = \{p f \in L^2(\R^d) : \chi_{\R^d \setminus S} \widehat{f} = 0 \}$. Let
$P_Q: L^2(\R^d)\to L^2(Q)$ and $B_S:  L^2(\R^d)\to \cB(S)$
be the orthogonal projection operators defined by  
$P_Q[f] := \chi_Q f,  $ and $ B_S[f] := \mathcal{F}^{-1}[ \chi_S(\xi) \widehat{f}(\xi)],  ~  f\in L^2(\R^d)$,
%The operator $P_Q$ is a {\it cutoff operator} on the  space domain $Q$, 
%called a {\it $Q$-spacelimiting operator}, 
%and $B_S$  is a {\it cutoff operator} on the  frequency domain $S$,  
%called an {\it $S$-bandlimiting operator}. 
%The  operators $P_Q$ and $B_S$ do not commute,  i.e,  $P_QB_S\neq B_SP_Q$, however,  the products are  adjoint of each others, i.e.,  $(P_QB_S)^*  = B_SP_Q$.  
and define the associated {\it spatio-spectral limiting operator} by 
$\widetilde{\cT}_{Q,S}= B_SP_QB_S$. Note that $\widetilde{\cT}_{Q,S} = (P_QB_S)^* (P_Q B_S)$, and so
$\widetilde{\cT}_{Q,S}$ is positive semidefinite. Further,   $\widetilde{\cT}_{Q,S}$   is product of orthogonal projections, so it has norm $\leq 1$. 

\begin{comment}
Moreover,  $\widetilde{\cT}_{Q,S}: \cB(S)\to \cB(S)$  is an integral operator. Indeed, for any $S$-bandlimited function $f\in \cB(S)$,
 \begin{equation}\label{integral operator}
\widetilde{\cT}_{Q,S}[f](x) =   \int_{y\in \R^d}   K(x,y) f(y)   dy, ~   \quad x\in  \R^d
 \end{equation}
with  $K(x,y) =  h(x-y)$,  where $\hat h = \chi_S$, when $x,y\in Q$, and  $K(x,y)=0$ otherwise.

The kernel of $\widetilde{\cT}_{Q,S}$ is a Hilbert-Schmidt kernel, i.e., square integrable on $Q\times Q$, and 
\[
\|\widetilde{\cT}_{Q,S}\|_{HS}^2 = \int_{x\in Q} \int_{y\in Q} |K(x,y)|^2 dxdy.
\]
The operator  $\widetilde{\cT}_{Q,S}$ is   compact, and by the spectral theorem, the  eigenvalues of $\widetilde{\cT}_{Q,S}$  are between $0$ and $1$,  and they form an infinite monotonic sequence converging to zero: 
\begin{equation}
1 \geq  \lambda_0(Q,S) \geq \lambda_1(Q,S) \geq \cdots > 0 , \;\;  \text{with} \; \;  \lambda_k(Q,S) \rightarrow 0 \mbox{ as } k \rightarrow \infty. 
\label{eig_seq:defn}
\end{equation}
The corresponding eigenfunctions $\{ \Psi_{k} \}_{ k \geq 0}$ are in  $\mathcal{B}(S)$, the space of $S$-bandlimited functions in $L^2(\R^d)$.
\end{comment}

\begin{remark} \label{remk:SSLO_alt}

Returning to the variational problem \eqref{Raleigh-quo}, for any $\psi\in \cB(S)$ with $\|\psi\|_{L^2(\R^d)} =1$,
 \begin{align}\label{V1}
\cJ(\psi) = \langle \psi, P_Q \psi\rangle  = \langle B_S\psi, P_Q B_S \psi\rangle = \langle \psi, B_SP_Q B_S \psi\rangle =\langle \psi, \widetilde{\cT}_{Q,S}\psi\rangle.
\end{align} 
\begin{comment}
We also can write 
 \begin{align}\label{V2}
\cJ(\psi) = 
 \langle P_Q\psi, P_Q \psi\rangle  =   \langle P_QB_S\psi, P_Q B_S\psi\rangle  =  \| P_QB_S(\psi)\|^2 
\end{align}

Therefore,  the variational problem, and the question of which normalized bandlimited functions $\psi$ maximize the quantity $\cJ(\psi)$, is actually a question about the eigenvalues of SSLO operators. More precisely, our task is to maximize     
$$ 
\max_{  f\in \cB(S) , \|f\|=1} \| P_QB_S(f)\|^2 .
$$ 
The variational problem corresponding  to the operator $\cT_{Q,S} = P_QB_SP_Q$ has a different formulation  than the problem   \eqref{V1}-\eqref{V2}. This is simply due to the fact that the projection operators  $P_Q$ and $B_S$ do not commute. 
Although  the eigenfunctions of the operators $\widetilde{\cT}_{Q,S}$ and $\cT_{Q,S}$ are different,
\end{comment}
The eigenvalues of $\widetilde{\cT}_{Q,S}$ and $\cT_{Q,S} := P_Q B_S P_Q$ are identical. This is because $\widetilde{\cT}_{Q,S}= L_{Q,S}^*L_{Q,S}$ and $\cT_{Q,S} = L_{Q,S} L_{Q,S}^*$, with $L_{Q,S} = P_Q B_S$, while $L^*L$ has the same nonzero eigenvalues as $LL^*$  for any bounded operator $L$ on Hilbert space. Thus, 
 \begin{equation}
 \label{eig_seqn_2:defn}
 \{\lambda_k(Q,S)\}_{k \geq 0} \mbox{ is the sequence of positive eigenvalues of } \cT_{Q,S} = P_QB_SP_Q.
 \end{equation} 
Moving forward, we shall use \eqref{eig_seqn_2:defn} as the defining property of $\lambda_k(Q,S)$. 
 
% For further discussion on the relationship between  $\cT_{Q,S}$ and $\widetilde{\cT}_{Q,S}$, see, e.g.,  \cite{Karnik21}, Section 2. 
 \end{remark}

\section{Local sine bases with near-exponential frequency decay}
\label{sec:lsb}

In this section  we construct an orthonormal basis for $L^2(I)$, $I=[0,1]$, consisting of compactly supported functions whose Fourier transforms have rapidly decaying tails. For this, we define a variant of the \emph{local sine basis} of Coifman-Meyer \cite{CM}.
 
% We use $\omega$ to denote the frequency variable in $\R$, and let $\mathcal{F}(f)(\omega)  = \widehat{f}(\omega) = \int_\R e^{- i x \omega} f(x) dx$ be the Fourier transform of a function $f \in L^2(\R)$.

We define a family of intervals $\cW$, which forms a partition of $(0,1)$, given by
\begin{equation}
\label{W:defn}
\cW =  \{ [ 2^{-j-1}, 2^{-j} ) \}_{j \geq 1} \cup  \{ [ 1 - 2^{-j}, 1 - 2^{-j-1})\}_{j \geq 1}.
\end{equation}
Write $\delta_L \in (0,1]$ to denote the length of an interval $L \in \cW$. Note that, for $\delta > 0$,
\begin{equation}\label{good_geom1:eqn}
\delta_L \leq \delta \implies L \subset [0,2 \delta] \cup [ 1 - 2 \delta,1]. 
\end{equation}

The family $\cW$ is a realization of the \emph{Whitney decomposition} of $(0,1)$ -- see \cite{Fefferman-Israel} for a discussion of the Whitney decomposition of an open domain in $\R^d$.

Given $a>0$, we define the ``envelope  function'' $\Psi_a : \R \rightarrow \R$ by 
\begin{equation}\label{def:Psi}
\Psi_a(\xi) := \exp \left(- a \cdot \lvert \xi \rvert^{2/3} \right).
\end{equation}

The main result of the section follows: 

\begin{proposition}\label{basis:prop}
There exists a family of functions $\cB(I) = \{ \phi_{L,k} \}_{(L,k) \in \cW \times \N}$ in $L^2(I)$ such that 

\begin{itemize}
\item Each $\phi_{L,k}$  is a  $C^\infty$ function on the real line, with $\supp(\phi_{L,k}) \subset I$.
\item The family $\cB(I)$ is an orthonormal basis for $L^2(I)$.
\item Each $\phi_{L,k}$ satisfies the Fourier decay condition:  
\begin{equation}
\label{Fourier_decay_CM:eqn}
\lvert \widehat{\phi_{L,k}} (\xi) \rvert \leq A \delta_L^{1/2}  \sum_{\sigma \in \{\pm 1\}} \Psi_a\left( \delta_L  ( \xi - \sigma  \pi (k+1/2)/ \delta_L) \right) \quad \forall \xi\in \R.
\end{equation}

\end{itemize} 
In particular, one can take $A = 12$, $a=1/115$ in \eqref{Fourier_decay_CM:eqn}.
\end{proposition}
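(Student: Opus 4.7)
The plan is to execute a variant of the Coifman-Meyer local sine basis construction, adapted to the Whitney decomposition $\cW$ of $(0,1)$, with a bell profile chosen from the Gevrey class of index $3/2$ so that its Fourier transform decays at the specified rate $\exp(-a|\xi|^{2/3})$. The exponent $2/3$ is the signature of Gevrey-$3/2$ regularity, which points directly to the cutoff $v_2(x) = \exp(-(1-x^2)^{-2})\chi_{(-1,1)}(x)$; this is consistent with Remark \ref{imp:rem}, where the family $v_m$ produces eigenvalue exponent $2 + 1/m$ and the default $5/2$ corresponds to $m=2$.

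First I would fix such a $v \in C_c^\infty(\R)$ and verify the Gevrey bound $\|v^{(n)}\|_\infty \leq C_0^{n+1}(n!)^{3/2}$ by inductively differentiating $\exp(-(1-x^2)^{-2})$ and controlling the combinatorics via Fa\`a di Bruno. Integrating $v/\|v\|_1$ produces a smooth step, and taking a square root of a suitable translate gives a bell $\rho$ in the same Gevrey class satisfying the folding identity $\rho(t)^2 + \rho(-t)^2 = 1$ in a neighborhood of $0$, with $\supp(\rho) \subset [-1/2,1/2]$. For each Whitney interval $L = [\alpha_L,\beta_L] \in \cW$, I would scale and translate $\rho$ at each endpoint to obtain a bell $b_L$ whose transition widths are matched to the minimum of $\delta_L$ and the length of the neighboring Whitney interval; since adjacent intervals in $\cW$ have lengths differing by a factor of at most two, the folding identity is compatible across every shared endpoint. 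The basis elements are then
\[
\phi_{L,k}(x) := \sqrt{2/\delta_L}\, b_L(x)\, \sin\!\left( \pi(k+1/2)(x-\alpha_L)/\delta_L \right),\quad (L,k) \in \cW \times \N.
\]
Each $\phi_{L,k}$ is $C^\infty$ and supported in $I$ because the Whitney intervals shrink geometrically toward $0$ and $1$. Orthonormality follows by the standard Coifman-Meyer argument: fixed-$L$ orthogonality in $k$ comes from the half-integer frequency sine system on $L$; cross-interval cancellation on each overlap region comes from the polarity matching of the adjacent bells combined with the parity of $\sin(\pi(k+1/2)t/\delta_L)$ about the shared endpoint; and non-adjacent $L$'s are supported disjointly. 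Completeness in $L^2(I)$ follows because the local folding operators are unitary and intertwine $\cB(I)$ with the standard orthonormal sine basis of $L^2(L)$ on each $L$.

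The final step is the Fourier decay bound. Writing the sine in $\phi_{L,k}$ as a sum of two complex exponentials and making the change of variables $x = \alpha_L + \delta_L t$ yields
\[
|\widehat{\phi_{L,k}}(\xi)| \leq \delta_L^{1/2} \sum_{\sigma \in \{\pm 1\}} \bigl| \widehat{b}\!\left( \delta_L(\xi - \sigma \pi(k+1/2)/\delta_L) \right) \bigr|,
\]
where $b$ is the unit-scale bell satisfying $b_L(x) = b((x-\alpha_L)/\delta_L)$. Hence \eqref{Fourier_decay_CM:eqn} reduces to the pointwise estimate $|\widehat{b}(\zeta)| \leq (A/2)\, \Psi_a(\zeta)$. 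This last bound follows by optimizing the elementary inequality $|\widehat{b}(\zeta)| \leq \|b^{(n)}\|_1 |\zeta|^{-n}$ over $n \in \N$, using a Gevrey estimate $\|b^{(n)}\|_1 \leq C_1^{n+1}(n!)^{3/2}$ transferred from $v$: Stirling gives an optimal $n \asymp |\zeta|^{2/3}$, which produces the claimed exponent in $\exp(-c|\zeta|^{2/3})$. The main obstacle, and where most of the work concentrates, is propagating explicit numerical constants through the construction of $\rho$ from $v$ (the square root and the Whitney scaling both inflate Gevrey constants) to land on the specific values $A = 12$ and $a = 1/115$ asserted in the proposition; in particular, one must choose the support margin of $\rho$ and the base constant $C_0$ carefully so that the Stirling optimization yields a constant no worse than $1/115$.
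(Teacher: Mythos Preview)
Your outline matches the paper's proof almost exactly: Coifman--Meyer local sine basis on the Whitney partition $\cW$, bell built from the Gevrey-$3/2$ bump $v(x)=e^{-(1-x^2)^{-2}}$, and Fourier decay obtained by integrating by parts $n$ times and optimizing $n\asymp|\zeta|^{2/3}$ via Stirling. The paper's derivation of the explicit constants also goes through a Gevrey--Paley--Wiener lemma applied to the rescaled bell $B_L(t)=b_L(\alpha_L+\delta_L t)$ supported in $[-2,2]$, landing on $A=12$ and $a=(2e C_3^{2/3})^{-1}\geq 1/115$.

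The one point where you diverge is the construction of the bell profile satisfying $\rho(t)^2+\rho(-t)^2=1$: you propose $\rho=\sqrt{\Theta}$ with $\Theta$ the normalized antiderivative of $v$, whereas the paper sets $\theta(x)=(\pi/2)\int_{-\infty}^x v\big/\!\int v$ and $s(x)=\sin\theta(x)$, so the folding identity is just $\sin^2+\cos^2=1$. This matters for the Gevrey bookkeeping. The class $G^{3/2}$ is closed under composition with analytic functions, so $\sin\circ\theta\in G^{3/2}$ with constants you can read off from Fa\`a di Bruno; by contrast, the square root of a nonnegative $G^{3/2}$ function that vanishes (even to infinite order) is \emph{not} automatically in $G^{3/2}$, and proving it for $\sqrt{\Theta}$ near the flat zeros would take extra work and further inflate the constants. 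Switching to the $\sin\theta$ construction removes this obstacle and the rest of your argument goes through verbatim. One minor correction: the rescaled bells are not all equal to a single $b$, since the left and right transition widths differ; what you actually need (and what the paper proves) is a \emph{uniform} bound $B_L\in G^{3/2}_0(1,C_3)$ with $C_3$ independent of $L$, which holds because the ratio of $\delta_L$ to either adjacent transition width is at most $3$.
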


To prove Proposition \ref{basis:prop}, we let $\cB(I)$ be a variant of the local sine basis for $L^2(I)$. The classical construction of the local sine basis (see \cite{CM,Weiss93}) involves first selecting a special family of cutoff functions $b_L$, each supported on a neighborhood of an interval $L \in \cW$, and then multiplying each cutoff function by a family of sine waves (indexed by a frequency $k \in \N$) to obtain a double-indexed family $\phi_{L,k}$. Classically, it is known that the resulting family is an orthonormal basis for $L^2(I)$ satisfying the stated support conditions. To prove Proposition \ref{basis:prop}, we will show that if the cutoff functions are suitably regular -- specifically, if they belong to the Gevrey class $G^{3/2}$ --  then the decay condition \eqref{Fourier_decay_CM:eqn} will be satisfied. We recall the definition of the Gevrey class in Section \ref{Gevrey-class}.

Note, due to the rapid decay  of $\Psi_a(\xi)$ as $\xi \rightarrow \infty$, the  inequality \eqref{Fourier_decay_CM:eqn} can be interpreted to say that the  $L^2$-energy of $\widehat{\phi_{L,k}}(\xi)$ is sharply concentrated near two frequencies, where $\xi \sim \pm \pi (k+1/2)/ \delta_L$.

The next subsection is devoted to the proof of Proposition \ref{basis:prop}.

\subsection{Proof of  Proposition \ref{basis:prop}}

% We shall prove Proposition \ref{basis:prop} in this subsection.  The proof is divided into several steps.
 
\subsubsection{Construction of orthonormal  local sine  bases for $L^2(I)$}\label{construction:ONB}

 {\bf Step 1.} 
Let $\cW$ be the collection of subintervals of $I=[0,1]$ defined in \eqref{W:defn}. We enumerate the intervals of $\cW$ as follows: Write $\cW = \{L_j\}_{j \in \Z}$, for
\[
L_j = [\alpha_j, \alpha_{j+1}), \;\; \mbox{ where } \alpha_j = 2^{j-1} \mbox{ if }  j \leq 0, \mbox{ while } \alpha_j = 1-2^{-j-1} \mbox{ if } j \geq 1.
\]
That is, $L_{-2} = [1/8,1/4), L_{-1} = [1/4, 1/2), L_0 = [1/2,3/4), L_1 = [3/4, 7/8)$, etc.
 
Define sequences $\delta_j,\epsilon_j > 0$, $j \in \Z$, by 
\begin{align}
\label{eqn:delta_def}
&\delta_j = \alpha_{j+1}-\alpha_j = \mbox{ the sidelength of } L_j, 
\\
\label{eqn:eps_def}
&\epsilon_j = \delta_j/3 \mbox{ if } j \leq -1, \mbox{ and } \epsilon_j = 2 \delta_j/3 \mbox{ if } j \geq 0.
\end{align}
(See also \footnote{$\delta_j= 2^{-(j+2)}$ for $j\geq 0$, while $\delta_j= 2^{j-1}$  for $j\leq -1$.})

Note that, for all $j \in \Z$, 
we have 
\begin{equation} \label{eqn:eps_prop} \alpha_j + \epsilon_j = \alpha_{j+1} - \epsilon_{j+1}.
\end{equation} 
To prove  \eqref{eqn:eps_prop}, we shall show that $\epsilon_j + \epsilon_{j+1} = \delta_j$ for all $j$. First note the basic properties of the sequence $\delta_j$: We have $\delta_{-1} = 1/4$ and $\delta_0 = 1/4$, while $\delta_{j+1} = \delta_j/2$ for $j \geq 0$, and $\delta_{j} = \delta_{j+1}/2$ for $j \leq -2$. So, 
\begin{align*}
&j \leq -2 \implies \epsilon_j + \epsilon_{j+1} = \delta_j/3 + \delta_{j+1}/3 = \delta_j/3 +  2\delta_j/3 = \delta_j,\\
&j \geq 0 \implies \epsilon_j + \epsilon_{j+1} = 2\delta_j/3 + 2\delta_{j+1}/3 = 2\delta_j/3 + \delta_j/3 = \delta_j.
\end{align*}
 Finally, for $j =-1$, evaluate 
$$
\epsilon_{-1} + \epsilon_{0} = \delta_{-1}/3 + 2 \delta_{0}/3 = 1/12 + 1/6 = 1/4 = \delta_{-1}.$$
Thus, \eqref{eqn:eps_prop} is proven.

{\bf Step 2.}
Now we are ready to construct the family $\{\phi_{L,k}\}_{L \in \cW, k \in \N}$ in $L^2(I)$ satisfying the conditions in Proposition \ref{basis:prop}.

We follow the construction of the \emph{local sine basis} in \cite{CM,Weiss93}. To match the notation of \cite{Weiss93},
we set   $ b_{jk}: =\phi_{L_j,k}$ for $j\in \Z$. We shall define a family $\{ b_{jk} \}_{(j,k) \in \Z \times \N}$ in $L^2(I)$, satisfying the following conditions:

\begin{itemize}

\item Each $b_{jk}$ is a $C^\infty$ function with compact support, and $\supp(b_{jk}) \subset [ \alpha_j - \epsilon_j, \alpha_{j+1} + \epsilon_{j+1}]  \subset I$.

\item The family $\{ b_{jk} \}_{(j,k) \in \Z \times \N}$ is an orthonormal basis for $L^2(I)$.

\item  Each $b_{jk}$  has near-exponential frequency decay:  
\begin{equation}
\label{uniformbd}
\lvert \widehat{b_{jk}} (\xi) \rvert \leq A \delta_j^{1/2} \sum_{\sigma \in \{\pm 1\}} \Psi_a\left( \delta_j \cdot ( \xi -  \sigma \pi (k+1/2) / \delta_j ) \right).
\end{equation}
Here, $A, a  > 0$ are constants; we can take $ A = 12$, $a = 1/115$.
\end{itemize}
These conditions on $\{b_{jk}\}_{(j,k) \in \Z \times \N}$ imply  the conditions in the conclusion of Proposition \ref{basis:prop}.

We now give the construction of $\{b_{jk}\}$. We follow the presentation in \cite{Weiss93}, and fix an even non-negative function 
$\psi \in C_c^\infty(\R)$, with $\supp (\psi) \subset [-1,1]$ and $\int_{-\infty}^\infty \psi(t) dt = \pi/2$.  (A particular $\psi$ is defined in  \eqref{cutoff-function}.)  Let 
 $\theta(x) := \int_{-\infty}^x \psi(t) dt$. Then 
$\theta \in C^\infty(\R)$, and  
since $\psi$ is even, we have   $\theta(x) + \theta(-x) = \pi/2$. 
Also, $\theta(x) = 0$ for $x \leq -1$ and $\theta(x) = \pi/2$ for $x \geq 1$.

Define 
\[
s(x) := \sin(\theta(x)) \mbox{ and } c(x) := \cos(\theta(x)),
\]
so that  
$$c(x) = \cos(\pi/2 - \theta(-x)) = \sin(\theta(-x)) = s(-x).$$
Observe that $s$ in $C^\infty(\R)$ satisfies
\begin{align}\label{eqn:s1}
&s(x) = 0 \mbox{ for } x \leq -1, \; \; s(x) = 1 \mbox{ for } x \geq 1,\\
\label{eqn:s2}
& s^2(x) + s^2(-x) = 1  \mbox{ for all } x \in \R.
\end{align}
For $0 < \epsilon \leq 1$, define the dilated functions 
\begin{equation}\label{eqn:seps}
    s_\epsilon(x) = s(x/\epsilon), \;\;c_\epsilon(x) = c(x/\epsilon) = s_\epsilon(-x).
\end{equation}
Then $s_\epsilon \in C^\infty(\R)$ and
\[
\begin{aligned}
&s_\epsilon(x) = 0 \mbox{ for } x \leq -\epsilon, \; \; s_\epsilon(x) = 1 \mbox{ for } x \geq \epsilon,\\
& s_\epsilon^2(x) + s_\epsilon^2(-x) = 1  \mbox{ for all } x \in \R. 
\end{aligned}
\]
Following \cite{Weiss93}, for each interval $ [\alpha_j,\alpha_{j+1}]$, and for the prescribed values of $\epsilon_{j}$ and $\epsilon_{j+1}$, we define a ``bell function'' $b_j = b_{[\alpha_j,\alpha_{j+1}]} : \R \rightarrow \R$ by
\begin{equation}
    \label{eqn:bell1}
    b_j(x) = s_{\epsilon_j}(x-\alpha_j) c_{\epsilon_{j+1}}(x - \alpha_{j+1}) = s\left(\frac{x- \alpha_j}{\epsilon_{j}} \right)   s\left(\frac{\alpha_{j+1} - x}{\epsilon_{j+1}}\right), \;\; j \in \Z. 
\end{equation}
By applying \eqref{eqn:s1},
\begin{equation}
\label{LAB}
b_j \equiv 0 \; \mbox{on} \; \R \setminus \left[\alpha_j- \epsilon_j, \alpha_{j+1} + \epsilon_{j+1} \right].
\end{equation} 
It follows from \eqref{eqn:eps_prop} that $\alpha_j - \epsilon_j \geq \alpha_{j-1} + \epsilon_{j-1} > 0$, and similarly that $\alpha_{j+1} + \epsilon_{j+1} \leq \alpha_{j+2} - \epsilon_{j+2} < 1$. Thus, $\left[\alpha_j- \epsilon_j, \alpha_{j+1} + \epsilon_{j+1} \right] \subset [0,1]$ for all $j \in \Z$.

We define the ``local sine basis"  $\{ b_{jk}\}_{j \in \Z, k \in \N}$, where
\begin{equation}
    \label{eqn:lsb}
    b_{jk}(x) = b_{j}(x) \sqrt{ \frac{2}{\alpha_{j+1}-\alpha_j}} \sin \left(\pi\frac{2k+1}{2} \frac{x-\alpha_j}{ \alpha_{j+1}-\alpha_j} \right)  \qquad j\in \Z, k \in \N.
\end{equation}

Using \eqref{LAB} and the definition of $b_{jk}$ in \eqref{eqn:lsb}, we see that 
\begin{equation}
\label{supp_cond}
\supp (b_{jk}) = \supp(b_j) \subset [ \alpha_j - \epsilon_j, \alpha_{j+1} + \epsilon_{j+1}] \subset [0,1] \; \mbox{for all} \; j \in \Z, \; k \in \N,
\end{equation}
as stated in the first bullet point above. 
The following theorem establishes the second bullet. 

%We prove that the sequence of local sine functions defined in \eqref{eqn:lsb} is an orthonormal basis for $L^2(I)$: 
 
\begin{theorem}\label{thm:lcb}
The family $\{b_{jk}\}$ defined in \eqref{eqn:lsb} is an orthonormal basis for $L^2([0,1])$.
\end{theorem}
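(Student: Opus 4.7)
The plan is to verify the orthonormality and completeness of $\{b_{jk}\}$ directly, following the standard argument for local trigonometric bases from Coifman--Meyer \cite{CM} and Auscher--Weiss--Wickerhauser as presented in \cite{Weiss93}. Three structural facts do essentially all the work: (i) the partition-of-unity identity $b_j^2(x) + b_{j-1}^2(x) = 1$ on the overlap neighborhood $[\alpha_j - \epsilon_j, \alpha_j + \epsilon_j]$, which follows from $s^2(y)+s^2(-y)=1$ together with $\epsilon_j + \epsilon_{j+1} = \delta_j$ (a consequence of \eqref{eqn:eps_prop}); (ii) the matching identity $b_{j-1}(x) = b_j(2\alpha_j - x)$ on that same neighborhood; and (iii) the parity of the sine factors $\varphi_{jk}(x) := \sqrt{2/\delta_j}\sin\bigl(\pi(2k+1)(x-\alpha_j)/(2\delta_j)\bigr)$, namely $\varphi_{jk}(2\alpha_j - x) = -\varphi_{jk}(x)$ and $\varphi_{jk}(2\alpha_{j+1} - x) = \varphi_{jk}(x)$, together with the elementary fact that $\{\varphi_{jk}\}_{k\ge 0}$ is an ONB of $L^2([\alpha_j,\alpha_{j+1}])$.

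For orthonormality I would treat three cases. When $|j - j'| \ge 2$, the interiors of $\supp(b_j)$ and $\supp(b_{j'})$ are disjoint by \eqref{eqn:eps_prop}, so $\langle b_{jk},b_{j'k'}\rangle=0$ trivially. When $j' = j+1$, on the overlap $[\alpha_{j+1} - \epsilon_{j+1}, \alpha_{j+1} + \epsilon_{j+1}]$ the product $b_j b_{j+1}$ is even about $\alpha_{j+1}$ while $\varphi_{jk}\varphi_{j+1,k'}$ is odd about $\alpha_{j+1}$ (a symmetric function times an antisymmetric one), so the integral vanishes. When $j = j'$, I would split $\int b_j^2 \varphi_{jk}\varphi_{jk'}\,dx$ into the two overlap regions and the central piece where $b_j \equiv 1$; on each overlap region, substituting $x \mapsto 2\alpha_j - x$ (respectively $x \mapsto 2\alpha_{j+1} - x$) and invoking (i)--(iii) collapses the integral to $\int_{\alpha_j}^{\alpha_{j+1}}\varphi_{jk}\varphi_{jk'}\,dx = \delta_{kk'}$.

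For completeness I would prove the Parseval identity $\sum_{j,k}|\langle f, b_{jk}\rangle|^2 = \|f\|_{L^2([0,1])}^2$ by folding. For each $j$, define $g_j \in L^2([\alpha_j,\alpha_{j+1}])$ by
\begin{align*}
g_j(x) &= b_j(x)f(x) - b_{j-1}(x)\,f(2\alpha_j - x) && \text{for } x\in[\alpha_j,\alpha_j+\epsilon_j],\\
g_j(x) &= b_j(x)f(x) && \text{for } x\in[\alpha_j+\epsilon_j,\alpha_{j+1}-\epsilon_{j+1}],\\
g_j(x) &= b_j(x)f(x) + b_{j+1}(x)\,f(2\alpha_{j+1}-x) && \text{for } x\in[\alpha_{j+1}-\epsilon_{j+1},\alpha_{j+1}].
\end{align*}
Using the parity of $\varphi_{jk}$ about the two endpoints and the matching identity (ii), a change of variables yields $\langle f, b_{jk}\rangle = \langle g_j, \varphi_{jk}\rangle_{L^2([\alpha_j,\alpha_{j+1}])}$, so Parseval for the sine ONB gives $\sum_k|\langle f, b_{jk}\rangle|^2 = \|g_j\|_{L^2([\alpha_j,\alpha_{j+1}])}^2$. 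Summing over $j$ and expanding the squares, the cross terms pair adjacent $g_{j-1}$ and $g_j$ across $\alpha_j$, and the identity $b_{j-1}^2 + b_j^2 = 1$ on the overlap makes the sum telescope to $\int_0^1 |f|^2\,dx$.

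The main obstacle is the bookkeeping in the completeness step, specifically verifying that the folded quantities combine exactly to $\|f\|_{L^2([0,1])}^2$ on each overlap region: on $[\alpha_j-\epsilon_j,\alpha_j+\epsilon_j]$ one must check that the four terms $b_j^2|f(x)|^2$, $b_{j-1}^2|f(x)|^2$, $b_j^2|f(2\alpha_j{-}x)|^2$, $b_{j-1}^2|f(2\alpha_j{-}x)|^2$ and the cross terms assemble, via (i) and (ii), to exactly $|f(x)|^2+|f(2\alpha_j{-}x)|^2$ integrated over the half-overlap, i.e.\ $\int|f|^2$ over the full overlap. This is the classical computation in \cite{Weiss93}, which carries over verbatim since the only feature of $s$ used is the partition identity $s^2(y)+s^2(-y)=1$, independent of any smoothness beyond continuity.
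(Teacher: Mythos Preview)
Your proposal is correct and is precisely the classical Coifman--Meyer/Auscher--Weiss--Wickerhauser argument the paper defers to; the paper does not give its own proof of Theorem~\ref{thm:lcb} but instead cites \cite{CM,Weiss93} and asserts the result carries over to the bounded-interval setting with trivial modifications, so you are simply writing out what the paper leaves implicit. The structural identities (i)--(iii) you list, the three-case orthogonality check, and the folding/Parseval computation are exactly the ingredients in those references, and your identification of the only delicate point (that the overlap contributions telescope to $\int_0^1 |f|^2$ via $b_{j-1}^2+b_j^2=1$) is accurate.
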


The papers \cite{CM,Weiss93} prove the orthonormality and completeness in $L^2(\R)$ for a basis of the form \eqref{eqn:lsb}, whenever $\alpha_j \in \R$, $j \in \Z$ is a strictly monotone sequence satisfying $\alpha_j \rightarrow + \infty$ as $j \rightarrow + \infty$ and $\alpha_j \rightarrow - \infty$ as $j \rightarrow - \infty$, while the sequence $\epsilon_j > 0$ (used in \eqref{eqn:bell1} to define the bell functions $b_j$) is assumed to satisfy \eqref{eqn:eps_prop}. By trivial modifications to the proof in \cite{CM,Weiss93}, one can see that the functions in \eqref{eqn:lsb} form an orthonormal basis for $L^2(I)$ ($I=[0,1]$) provided that $\alpha_j$ is a strictly monotone  sequence in $(0,1)$ satisfying $\alpha_j \rightarrow 1$ as $j \rightarrow \infty$, $\alpha_j \rightarrow -1$ as $j \rightarrow -\infty$, and  $\epsilon_j > 0$ satisfies \eqref{eqn:eps_prop}, where the bell functions $b_j$ are defined in \eqref{eqn:bell1}. In particular, the basis constructed here, where $\alpha_j$ are picked to be the endpoints of intervals of $\cW$, is an orthonormal basis for $L^2(I)$. We do not provide details for the proof of Theorem \ref{thm:lcb}, but instead, refer the reader to \cite{CM,Weiss93}, where the  argument is given.

Thus, the second bullet point is satisfied for the family $\{b_{jk}\}$.

%\vskip.12in 
% Azita: I remove the vskips as above, since they lead to inconsistent spacing in the document -- that is, some paragraphs are separated by different amounts from other paragraphs. If you'd like to change the paragraph spacing, can I recommend implementing that change at the top of the document when the style class is defined?

We point out that the construction of the local sine basis is somewhat flexible, due to the choice of the smooth cutoff function $\psi \in C^\infty_c$, which determines $\theta$ and $s$ in $C^\infty$. Recall, from \eqref{eqn:s1}, \eqref{eqn:s2} that $s$ resembles a smoothed version of the Heavyside function, and it is used to define the bell functions $b_j$ in \eqref{eqn:bell1}.

As we shall see, if $\psi$ is picked carefully, then $s$ will belong to  the Gevrey class $G^{3/2}$, and it will follow that the basis $\{b_{jk}\}$ satisfies \eqref{uniformbd}, completing the proof of Proposition \ref{basis:prop}. 

In the next subsection we give the definition of the Gevrey class, and state some of its basic properties. In the following subsection we define the cutoff function $\psi\in C_c^\infty$ and prove the required properties of $s$. In the final subsection, we prove the Fourier decay condition \eqref{uniformbd} for the basis $\{b_{jk}\}$.

\subsubsection{Gevrey class}\label{Gevrey-class}
\label{sec:Gevrey}

We denote   by $C^\infty(I)$  the class of all $C^\infty$ function $f : I \rightarrow \R$ defined on an interval $I$. We denote the $k$-fold derivative of $f$ by $f^{(k)}$ and write $\| f \|_\infty$ for the  $L^\infty$ norm of $f$ on $I$.

Following H\"ormander's book \cite{Hor2} (see also  \cite{Rod1}), we say that a  function  $f\in C^\infty(I)$ is in the Gevrey class $G^r$ ($r \geq 1$) provided that for all compact sets $K \subset I$ there exist $C_K, R_K > 0$ such that $|f^{(k)}(x)| \leq C_K R_K^{k} (k!)^r$ for all $x \in K$ and $k \geq 0$. Observe that $G^r\subset C^\infty(I)$.

We say that $f$ is in the Gevrey class $G^r_0$ provided that there exist constants $C, R > 0$    such that (1) $|f^{(k)}(x)| \leq C R^{k} (k!)^r$ for all $x \in I$ and $k \geq 0$. Sometimes we shall want to record the constants in the Gevrey condition: We say that $f$ is in the class $G_0^r(C,R)$ when (1) is valid for given $C$ and $R$.
Note that $G^r_0 = \bigcup_{C,R > 0} G^r_0(C,R)$,  
 and $G_0^{r} \subset L^\infty(I) \cap C^\infty(I)$.

 Gevrey classes interpolate between the space $C^\infty$ (identified with $G^r$ in the limit $r \rightarrow \infty$) and the space of analytic functions $\mathcal{A}$ (identified with $G^1$).

Note that the functions $\sin(x)$ and $\cos(x)$ belong to the Gevrey class $G^r_0(1,1)$ for any $r \geq 1$.
Also, for each $r > 1$, the Gevrey class $G^r$ contains a nonzero compactly supported $C^\infty$ function (see \cite{Hor2}).

We now state two standard results on the Gevrey class, which will be needed in what follows. For completeness, we provide proofs in the appendix (Section \ref{appendix:sec}).
The first result states that the Gevrey class is closed under multiplication and composition.

\begin{proposition}
\label{prop:Gevrey1} For any $r > 1$, the Gevrey class $G_0^r$ is an Algebra closed under multiplication and composition. In particular, 
\begin{enumerate}
    \item Suppose $f : I \rightarrow \R$ is in  $G^r_0(C,R)$ and $g : I \rightarrow \R$ is in $G^r_0(D,S)$. Then: $f g \in G_0^r(CD,2 \max \{R,S\})$.
    \item Suppose $f : I \rightarrow \R$ is in  $G^r_0(C,R)$ and $g : J \rightarrow \R$ is in $G^r_0(D,S)$, and suppose $f(I) \subset J$. Then: $g \circ f : I \rightarrow \R$ is in $G_0^r(D,T)$, with $T = R(1+CS)$.
\end{enumerate}
\end{proposition}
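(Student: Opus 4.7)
The plan is to prove both parts by direct computation, leveraging classical derivative identities (Leibniz and Faà di Bruno) together with elementary factorial inequalities.

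For part (1), the Leibniz rule gives
\[
|(fg)^{(k)}(x)| \leq CD \sum_{j=0}^k \binom{k}{j} R^j S^{k-j} (j!)^r ((k-j)!)^r.
\]
The key observation is the identity $\binom{k}{j}(j!)^r((k-j)!)^r = (k!)^r \binom{k}{j}^{1-r}$, which is $\leq (k!)^r$ whenever $r \geq 1$. The remaining sum is bounded by $(k+1) \max\{R,S\}^k \leq (2\max\{R,S\})^k$, using $k+1 \leq 2^k$ for $k \geq 0$. This yields $fg \in G_0^r(CD, 2\max\{R,S\})$ with exactly the claimed constants.

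For part (2), I would start from Faà di Bruno's formula
\[
(g \circ f)^{(n)}(x) = \sum_\pi g^{(|\pi|)}(f(x)) \prod_{B \in \pi} f^{(|B|)}(x),
\]
where $\pi$ ranges over set partitions of $\{1,\dots,n\}$. Substituting the Gevrey bounds on $f$ and $g$, and using $\sum_{B \in \pi} |B| = n$, one obtains
\[
|(g \circ f)^{(n)}(x)| \leq D R^n \sum_{\pi} (CS)^{|\pi|} \bigl[|\pi|! \prod_{B \in \pi} |B|!\bigr]^r.
\]
The crucial combinatorial inequality here is $|\pi|!\prod_{B} |B|! \leq n!$, which I would reduce to the elementary bound $m!(n-m+1)! \leq n!$ (with $m = |\pi|$) by observing that $\prod_B |B|!$ is maximized, among compositions of $n$ into $m$ positive parts, at the ``all-singletons except one large block'' configuration. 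Applying this inequality pulls out a factor of $(n!)^{r-1}$. Reorganizing the remaining sum by grouping set partitions with a prescribed block-size multiset $\{1^{j_1}, 2^{j_2}, \dots\}$, and using that the number of set partitions with this multiset is $n!/(\prod_k (k!)^{j_k} \prod_k j_k!)$, the sum collapses after cancellation to
\[
\sum_\pi (CS)^{|\pi|} |\pi|! \prod_B |B|! \;=\; n! \sum_{m=1}^n (CS)^m \binom{n-1}{m-1},
\]
where the binomial coefficient appears as the number of compositions of $n$ into $m$ positive parts. The binomial theorem then gives $CS(1+CS)^{n-1} \leq (1+CS)^n$, producing the claimed bound with $T = R(1+CS)$.

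The main obstacle is the combinatorial bookkeeping in the composition step: landing precisely at the constant $R(1+CS)$ (rather than a weaker bound) requires (i) using the inequality $|\pi|!\prod_B |B|! \leq n!$ to peel off the ``extra'' factor of $(n!)^{r-1}$ without loss, and (ii) identifying the residual sum as $n!$ times a standard composition-counting identity. The two reductions must be done in the right order: extract $(n!)^{r-1}$ first, then convert from set-partition sums to compositions. The multiplication case, by contrast, is essentially routine once the Leibniz identity is rewritten in the form above.
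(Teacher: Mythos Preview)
Your proof is correct and follows essentially the same architecture as the paper's: Leibniz for part (1), Fa\`a di Bruno for part (2), extraction of a factor $(n!)^{r-1}$ via the inequality $|\pi|!\prod_B|B|!\le n!$, and then exact evaluation of the residual sum as $n!\cdot CS(1+CS)^{n-1}$.

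The differences are in execution rather than strategy. For the key inequality $m!\prod_i b_i!\le n!$ (with $m=|\pi|$ and $b_i$ the block sizes), the paper proves it by observing that $(j!)^{1/(j-1)}$ is increasing, which gives $j!\le (n!)^{(j-1)/(n-1)}$ and $k!\le (n!)^{(k-1)/(n-1)}$; multiplying these with the right exponents collapses to $(n!)^{r-1}$. Your rearrangement argument (concentrate the block sizes to $(n-m+1,1,\dots,1)$, then use $m!(n-m+1)!\le n!$) is more combinatorial and arguably more transparent. For the residual identity, the paper simply cites $\sum \frac{k!}{k_1!\cdots k_n!}H^k = H(1+H)^{n-1}$ from Krantz, whereas you rederive it by passing from set partitions to compositions; the two are the same identity in different notation. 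Your version is self-contained, the paper's is shorter.
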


By the Paley-Wiener theorem, if $f \in C_c^\infty(\R)$ then $\widehat{f}$ is analytic, and its restriction on the real line $\widehat{f}(\xi)$
has polynomial (Schwartz) decay as $\xi \rightarrow \infty$, namely, for every $N \geq 1$ there exists $C_N > 0$ such that $| \widehat{f}(\xi)| \leq C_N(1+|\xi|)^{-N}$ for all $\xi\in \R$. One can improve the polynomial decay to near-exponential decay provided $f$ is in the Gevrey class.  This is the next result. Also see Lemma 12.7.4 of \cite{Hor2}, and \cite{DH} for related results.

\begin{proposition}\label{prop:Gevrey_PW}
Suppose that $B : \R \rightarrow \R$ is $C^\infty$, satisfying:
\begin{enumerate}[(a)]
\item $B$ is supported on $[-2,2]$.
\item $B \in G^{3/2}_0(C,R)$ for some $C,R > 0$. 
\end{enumerate}
Set $A=12C$ and $a=1/(2eR^{2/3})$. 
Then $\lvert  \widehat{B} \lvert$ has near-exponential decay: 
 $$  \lvert \widehat{B} (\xi) \rvert \leq A  \exp( - a \cdot | \xi|^{2/3})   ~~  \text{for all}~  ~  \xi \in \R.$$
\end{proposition}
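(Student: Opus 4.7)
The plan is to estimate $\widehat{B}(\xi)$ by repeated integration by parts, then to optimize the number of derivatives taken as a function of $|\xi|$. Since $B \in C_c^\infty(\R)$ with $\supp(B) \subset [-2,2]$, all boundary terms vanish, and for every $k \geq 0$ and $\xi \neq 0$ we have
\[
\widehat{B}(\xi) = (i\xi)^{-k} \int_{-2}^{2} B^{(k)}(x)\, e^{-ix\xi}\, dx .
\]
Taking absolute values and using the Gevrey hypothesis $\|B^{(k)}\|_\infty \leq C R^k (k!)^{3/2}$, we obtain the family of estimates
\[
|\widehat{B}(\xi)| \leq 4\, |\xi|^{-k} \|B^{(k)}\|_\infty \leq 4 C \left( \frac{R}{|\xi|} \right)^{k} (k!)^{3/2},
\qquad k \in \N.
\]

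Next I would apply a crude Stirling-type estimate, e.g.\ $k! \leq k^k$, to rewrite the above as $4C\,(R k^{3/2}/|\xi|)^{k}$, and then minimize over $k$. The continuous minimizer of $t \mapsto (R t^{3/2}/|\xi|)^t$ occurs at $t_\ast = (|\xi|/R)^{2/3}/e$, where the value equals $\exp(-\tfrac{3}{2} t_\ast)$. Setting $k := \max\{1, \lfloor t_\ast \rfloor\}$ and checking the resulting bound, one obtains a near-exponential estimate of the form
\[
|\widehat{B}(\xi)| \leq A \exp\!\left( -a\, |\xi|^{2/3} \right)
\]
for suitable constants. For small $|\xi|$, where $t_\ast < 1$, the trivial bound $|\widehat{B}(\xi)| \leq \int |B| \leq 4C$ is used, and the target constant $A=12C$ is chosen large enough that the inequality $4C \leq 12 C \exp(-a|\xi|^{2/3})$ holds on the transition regime $|\xi| \lesssim R$.

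The main technical difficulty is not the overall mechanism—this is a standard Gevrey-to-Paley-Wiener estimate—but rather the bookkeeping needed to land exactly on the explicit constants $A = 12C$ and $a = 1/(2eR^{2/3})$ claimed in the statement. In particular, the $k$-rounding error and the difference between $(k!)^{3/2}$ and its Stirling surrogate produce polynomial-in-$k$ prefactors that must be absorbed into the factor $12$; in practice one gains room by choosing a slightly suboptimal exponent (replacing $t_\ast \sim (|\xi|/R)^{2/3}/e$ by a smaller multiple like $(|\xi|/(2eR))^{2/3}$), which explains the slightly weaker constant $a = 1/(2eR^{2/3})$ rather than the sharper $3/(2eR^{2/3})$ that a purely continuous optimization would suggest. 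Once the exponent is chosen so that each factor in $(Rk^{3/2}/|\xi|)^{k}$ is bounded by a fixed quantity strictly less than $1$, the rest is a direct computation.
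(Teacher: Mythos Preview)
Your mechanism (integration by parts, then the Gevrey bound $|\widehat{B}(\xi)| \leq 4C(R/|\xi|)^k(k!)^{3/2}$, then optimization in $k$) is correct, and the first half matches the paper exactly. The divergence is in how the optimization is carried out. You pick a single near-optimal integer $k \approx (|\xi|/R)^{2/3}/e$ and then worry about rounding and Stirling prefactors; as you note, this makes the exact constants $A=12C$, $a=1/(2eR^{2/3})$ awkward to nail down.

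The paper avoids this bookkeeping by a different device: rather than selecting one $k$, it multiplies $|\widehat{B}(\xi)|$ by $e^{a|\xi|^{2/3}} = \sum_{n\ge 0} a^n|\xi|^{2n/3}/n!$ and bounds each term of the series separately. To match the fractional power $|\xi|^{2n/3}$, it first interpolates the integer-$k$ estimates to real exponents $s$ by geometric averaging of consecutive integers, then sets $s=2n/3$. After Stirling ($n!\ge(n/e)^n$) the $n$th term is bounded by $4C(eaR^{2/3})^n n$, and the choice $a=1/(2eR^{2/3})$ makes this $4C\cdot n/2^n$; summing gives $4C + 4C\sum_{n\ge 1} n/2^n = 12C$ on the nose. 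So the paper's trick trades your single-$k$ optimization for a summation identity, which is what produces the clean constants without any rounding analysis.
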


\subsubsection{Construction of cutoff functions and the modified Heavyside function $s$}\label{construction:cutoff}

In this section we define the function  $\psi \in C^\infty_c$ used in the construction of the local sine basis $\{ b_{jk}\}$.

Define the bump function 
\begin{equation} \label{c0}
v(x) = \left\{
        \begin{array}{ll}
            e^{- (1-x^2)^{-2}}  & \quad x \in (-1,1)\\
            0   & \quad x \in (-\infty, - 1] \cup [1,\infty).
        \end{array}
    \right.
 \end{equation}
 
\begin{comment} 
Python code: 

import numpy as np
import matplotlib.pyplot as plt

def v(x):
    if -1 < x < 1:
        return np.exp(-((1 - x**2)**-2))
    else:
        return 0

x_values = np.linspace(-2, 2, 1000)  # Generate x-values from -2 to 2
y_values = [v(x) for x in x_values]  # Calculate corresponding y-values

plt.plot(x_values, y_values)
plt.xlabel('x')
plt.ylabel('v(x)')
plt.title('Graph of v(x)')
plt.grid(True)
plt.show()

\end{comment}

\begin{lemma}\label{der_bounds1} 
The function $v$ given in \eqref{c0} is in $C^\infty(\R)$, with $\supp(v) \subset [-1,1]$. Further, $v$ is in the Gevrey class $G^{3/2}_0(1,C_1)$ on $\R$, for $C_1 = 2 + 3 \sqrt{2}$.
Thus, 
$v$  
satisfies
\[ \| v^{(k)} \|_{\infty} \leq C_1^{k} \cdot (k!)^{3/2} \;\;\; \mbox{for all} \; k \geq 0.
\]
\end{lemma}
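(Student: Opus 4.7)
The plan is to verify the three claims in sequence. First, $v \in C^\infty(\R)$ with $\supp(v) \subset [-1,1]$ is routine: $v$ is smooth on $(-\infty,-1) \cup (-1,1) \cup (1,\infty)$ as a composition of smooth functions, and at the endpoints $x = \pm 1$, a straightforward induction shows $v^{(k)}(x) \to 0$ from inside $(-1,1)$, because the factor $e^{-(1-x^2)^{-2}}$ decays faster than any algebraic blow-up of the derivatives of $(1-x^2)^{-2}$; hence $v$ extends smoothly across the endpoints with all derivatives vanishing there.

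The heart of the lemma is the Gevrey bound $|v^{(k)}(x)| \leq C_1^k (k!)^{3/2}$, uniform in $x \in \R$ (only $x \in (-1,1)$ is nontrivial). My plan is to argue by complex analysis. Extend $v$ holomorphically to $v(z) := \exp(-(1-z^2)^{-2})$ on $\C \setminus \{\pm 1\}$. For $x \in (-1,1)$, set $d := 1 - |x|$ and apply Cauchy's integral formula on the circle $|z-x| = \rho$ with $0 < \rho < d$:
\[
|v^{(k)}(x)| \leq \frac{k!}{\rho^k}\sup_{|z-x|=\rho}|v(z)|.
\]
Since $|v(z)| = \exp(-\mathrm{Re}((1-z^2)^{-2}))$ and, by the triangle inequality, $|1-z^2| \geq (d-\rho)(2-d-\rho)$ for $|z-x|=\rho$, one obtains
\[
|v^{(k)}(x)| \leq \frac{k!}{\rho^k}\exp\!\left(\frac{1}{(d-\rho)^2(2-d-\rho)^2}\right).
\]
The optimal $\rho \in (0,d)$ balances $\rho^{-k}$ against the exponential: the critical-point equation $k(d-\rho)^3 \approx 2\rho$ gives $d-\rho \sim (2d/k)^{1/3}$, so $(d-\rho)^{-2} \sim (k/(2d))^{2/3}$, and Stirling converts the resulting bound into $(k!)^{3/2}$ times a geometric factor. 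In the regime $d \lesssim k^{-1/2}$ near the endpoints, one must additionally retain the strong damping $v(x) = e^{-(1-x^2)^{-2}}$ in the estimate; this is done by refining the bound on $\mathrm{Re}((1-z^2)^{-2})$ along a contour staying in a sector about the real axis where this real part is bounded below by a positive fraction of $(1-x^2)^{-2}$, preserving the endpoint damping.

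An alternative, more combinatorial route applies Faà di Bruno to $v = e^{-\phi}$ with $\phi(x) = (1-x^2)^{-2}$. Partial fractions give
\[
\phi(x) = \tfrac{1}{4}\bigl[(1-x)^{-2}+(1+x)^{-2}\bigr] + \tfrac{1}{4}\bigl[(1-x)^{-1}+(1+x)^{-1}\bigr],
\]
so $|\phi^{(j)}(x)| \leq (j+1)!/d^{j+2}$. The identity $v^{(k)} = e^{-\phi} B_k(-\phi',\ldots,-\phi^{(k)})$ (with $B_k$ the complete Bell polynomial), together with the generating-function bound $\sum_k B_k(|\phi'|,\ldots)\,t^k/k! \leq \exp(1/(d-t)^2)$, reduces the problem to estimating $[t^k]\exp(1/(d-t)^2)$ via a Cauchy estimate at radius $r<d$. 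Because the Taylor coefficients of $\exp(1/(d-t)^2)$ are positive, one has the clean bound $[t^k]\exp(1/(d-t)^2) \leq \exp(1/(d-r)^2)/r^k$, and the ensuing optimization in $r$ mirrors that of the complex-analytic approach.

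The main obstacle is the radius optimization: the factors $\rho^{-k}$ and $\exp(C/(d-\rho)^2)$ blow up at opposite ends of the admissible interval $(0,d)$, and the optimal balance depends delicately on $(x,k)$. It is precisely the scaling $d-\rho \sim (d/k)^{1/3}$ that produces the exponent $3/2$ in $(k!)^{3/2}$, rather than the worse exponent a naive choice would give. Verifying that the optimization can be tuned to yield the specific constant $C_1 = 2 + 3\sqrt{2}$ uniformly in $x$ requires explicit bookkeeping at the stationary point, together with careful handling of the near-endpoint regime, where the exponential damping of $v$ itself becomes essential to maintain a uniform bound.
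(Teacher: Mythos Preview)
Your approach is plausible in outline but differs substantially from the paper's, and as written it has real gaps. You attack $v$ directly via Cauchy estimates on a circle of radius $\rho$ about $x$, and you correctly identify that balancing $\rho^{-k}$ against $\exp((d-\rho)^{-2})$ forces $d-\rho\sim (d/k)^{1/3}$ and hence the $3/2$ exponent. However, you never actually carry out this optimization; you only describe its qualitative shape. Moreover, the ``near-endpoint regime'' you flag (where $d$ is small compared to $k^{-1/2}$) is genuinely delicate: there you must exploit the damping $e^{-(1-x^2)^{-2}}$ of $v(x)$ itself, and your remark about ``refining the bound on $\mathrm{Re}((1-z^2)^{-2})$ along a contour staying in a sector'' is a promissory note, not an argument. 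Tracking constants through all of this to land exactly on $C_1=2+3\sqrt2$ would be laborious, and nothing in your sketch indicates how that specific number would emerge.

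The paper sidesteps every one of these difficulties by a decomposition you did not consider: write $v(t)=g(f(t))$ with $f(t)=1-t^2$ and $g(x)=e^{-x^{-2}}$, and use the composition law for Gevrey classes (Proposition~\ref{prop:Gevrey1}). The polynomial $f$ is trivially in $G_0^{3/2}(1,2)$. For $g$, there is a single singularity at $0$, so the Cauchy estimate is clean: on the circle of radius $x/2$ about $x>0$ one has $\sup|e^{-z^{-2}}|=e^{-4/(9x^2)}$ exactly (the supremum is attained at $z=3x/2$), giving $|g^{(k)}(x)|\le k!(2/x)^k e^{-4/(9x^2)}$; then the elementary bound $y^{k/2}e^{-y}\le (k/2e)^{k/2}$ and Stirling yield $g\in G_0^{3/2}(1,3/\sqrt2)$. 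The composition law then gives $v\in G_0^{3/2}(1,\,2(1+3/\sqrt2))=G_0^{3/2}(1,\,2+3\sqrt2)$, and the constant $C_1$ drops out for free. The point is that by pushing the Cauchy argument onto the inner function $g$, one avoids the two-singularity geometry, the $x$-dependent radius optimization, and the endpoint case analysis entirely.
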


\begin{proof}  We shall demonstrate that  $v$ is the composition of two functions in the Gevery class. Then we use Proposition \ref{prop:Gevrey1} to complete the proof.

Note that $v(t) = g(1-t^2)$ for $t \in (-1,1)$, where $g : (0,\infty) \rightarrow \R$ is the function $g(x)  = e^{- x^{-2}}$. 
We claim that that $g$ is in the Gevrey class $G^{3/2}_0(1,3/\sqrt{2})$ on $(0,\infty)$. Indeed, apply the Cauchy integral formula for $g(z)= e^{-z^{-2}}$, with $\gamma$ a circle in $\C$ of center $x$ and radius $x/2$,
\[
g^{(k)}(x) = \frac{k!}{2 \pi i} \oint_\gamma \frac{e^{-z^{-2}}}{(z-x)^{k+1}} dz.
\]
On $\gamma$ the function $\mathrm{Re}(z^{-2})$ achieves its minimum value at $z = 3x/2$, hence, $\sup_{z \in \gamma} | e^{-z^{-2}}| = e^{-4/9x^2}$; thus,
\[
|g^{(k)}(x)| \leq k! \left(2/x \right)^k e^{-4/{9x^2}} = k! 3^k \left(4/9x^2 \right)^{k/2} e^{-4/{9x^2}}.
\]
By elementary calculus, $y^R e^{-y} \leq (R/e)^R$ for $y,R > 0$. Use this inequality with $y = 4/{9x^2}$ and $R = k/2$,  to get
\[
|g^{(k)}(x)| \leq  k! 3^k (k/2e)^{k/2}.
\]
By Stirling's formula, $\sqrt{2 \pi k} (k/e)^k \leq k!$ (see \cite{Robbins}), so that
\[
|g^{(k)}(x)| \leq  (3/\sqrt{2})^k (k!)^{3/2}.
\]
Therefore, $g$ is in the Gevrey class $G^{3/2}_0(1,3/\sqrt{2})$ on $(0,\infty)$.

On the other hand, the function $f(t) = 1-t^2$ is in the Gevrey class $G^{3/2}_0(1,2)$ on $I = (-1,1)$, since $|f(t)| \leq 1$, $|f'(t)| \leq 2$, and $|f''(t)| \leq 2$ on $I$, while the higher derivatives vanish. By the composition law for Gevrey functions (part 2 of Proposition \ref{prop:Gevrey1}) and $f(I)\subset (0,\infty)$, $v(t) = e^{- (1-t^2)^{-2}}$ is in the Gevrey class $G^{3/2}_0(1,2(1+3/\sqrt{2}))$ on $(-1,1)$. Because $v \equiv 0$ on $\R \setminus (-1,1)$, we deduce that $v$ is in $G^{3/2}_0(1,2(1+3/\sqrt{2}))$ on $\R$.
This completes the proof of the lemma.

\end{proof}

Next, we  define the cutoff  function $\psi \in C_c^\infty(\R)$ using the $v$ in \eqref{c0}. Set  
\begin{align}\label{cutoff-function}
\psi(y) := (\pi/2)\left( \int_\R v  dx \right)^{-1} v(y).
\end{align}

Clearly, $\psi$ is even (since $v$ is even), $\psi \geq 0$, $\supp (\psi) \subset [-1,1]$ and $\int \psi dy = \pi/2$. As in the discussion preceding \eqref{eqn:s1}, we let 

\begin{equation}
    \label{s_theta_defn}
\theta(x) := \int_{-\infty}^x \psi(y) dy, \quad \text{and} \quad  s(x) := \sin(\theta(x)).
\end{equation}
This $s$ satisfies \eqref{eqn:s1}, \eqref{eqn:s2}. Next, we show that $s$ is in the Gevrey class $G^{3/2}_0$.

\begin{lemma}\label{lem:cutoff}
The function $s$ is in $G^{3/2}_0(1,C_2)$ on $\R$, for $C_2 = (2+3 \sqrt{2})(1+\pi/2)$.
\end{lemma}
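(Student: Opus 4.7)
The plan is to decompose $s = \sin \circ \theta$ and invoke the composition law in Proposition \ref{prop:Gevrey1}(2). Since the derivatives of $\sin$ are bounded in absolute value by $1$ on all of $\R$, we have $\sin \in G_0^{3/2}(1,1)$. The main task then reduces to showing $\theta \in G_0^{3/2}(\pi/2, C_1)$ with $C_1 = 2 + 3\sqrt{2}$; once that is in hand, the composition rule yields $s \in G_0^{3/2}(1, C_1(1 + (\pi/2)\cdot 1)) = G_0^{3/2}(1, C_2)$, which is exactly the conclusion of the lemma.

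To verify that $\theta \in G_0^{3/2}(\pi/2, C_1)$, I would check the derivative bound $\|\theta^{(k)}\|_\infty \leq (\pi/2) C_1^k (k!)^{3/2}$ separately for $k = 0$ and $k \geq 1$. For $k = 0$, the normalization $\int \psi = \pi/2$ gives $|\theta(x)| \leq \pi/2$ directly. For $k \geq 1$, differentiate $\theta$ and use $\psi = \lambda v$ with $\lambda := (\pi/2)/\int v$, so that $\theta^{(k)} = \psi^{(k-1)} = \lambda v^{(k-1)}$. Lemma \ref{der_bounds1} gives $\|v^{(k-1)}\|_\infty \leq C_1^{k-1}((k-1)!)^{3/2}$, and the required inequality reduces after simplification to the single condition $\lambda \leq (\pi/2) C_1 k^{3/2}$ for every $k \geq 1$, whose worst case is $k = 1$, i.e., $\int v \geq 1/C_1$.

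The main obstacle is this lower bound on $\int v$, which is a tight numerical estimate. I would handle it by restricting the integral to a small symmetric subinterval of $(-1,1)$ on which $v$ is bounded below by an explicit constant. For instance, on $[-1/2, 1/2]$ one has $(1-x^2)^{-2} \leq (3/4)^{-2} = 16/9$, so $v(x) \geq e^{-16/9}$ there, giving
\[
\int_\R v \,\geq\, e^{-16/9} \,>\, \frac{1}{2+3\sqrt{2}} = \frac{1}{C_1},
\]
which is the needed inequality (the numerical values are $e^{-16/9} \approx 0.169$ versus $1/C_1 \approx 0.160$). This lower bound is the only nontrivial arithmetic input; everything else is a clean application of the algebraic structure of the Gevrey class.

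With $\theta \in G_0^{3/2}(\pi/2, C_1)$ established, I would close the proof by invoking Proposition \ref{prop:Gevrey1}(2) directly with $(C,R) = (\pi/2, C_1)$ for $f = \theta$ and $(D,S) = (1,1)$ for $g = \sin$, so that $T = R(1 + CS) = C_1(1 + \pi/2) = C_2$ and $s = \sin \circ \theta \in G_0^{3/2}(1, C_2)$, as claimed.
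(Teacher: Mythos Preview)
Your proof is correct and follows essentially the same route as the paper: show $\theta \in G_0^{3/2}(\pi/2, C_1)$ from Lemma~\ref{der_bounds1} and then apply the composition rule in Proposition~\ref{prop:Gevrey1}(2) with $\sin \in G_0^{3/2}(1,1)$. The only difference is in the lower bound for $\int v$: the paper quotes the numerical value $\int v \approx 0.3402 \geq 1/3$, whereas you give the elementary estimate $\int v \geq e^{-16/9} > 1/C_1$, which is a cleaner (if tighter) way to close the same inequality.
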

\begin{proof}
From \eqref{c0}, by numerical integration,
\[
\int_\R v dx = \int_{-1}^1 e^{-(1-x^2)^{-2}} dx = 0.3402 \cdots \geq 1/3.
\]
So, according to the definition of $\psi$ above, $\psi = c v$ for $|c| \leq 3 \pi/2$. If $f$ is in the Gevrey class $G^{r}_0(A,B)$ then $cf$ is in $G^{r}_0(|c|A,B)$ for any constant $c$. In Lemma \ref{der_bounds1} we showed that $v \in G^{3/2}_0(1,C_1)$, for $C_1 = 2 + 3 \sqrt{2}$. 
Thus, $\psi \in G^{3/2}_0(3\pi/2,C_1)$. Given that $\theta(x) = \int_{-\infty}^x \psi(y) dy$, we have 
\[
\| \theta \|_{\infty} \leq \int \psi dy = \pi/2.
\]
Using that $\theta' = \psi \in G^{3/2}_0(3\pi/2 ,C_1)$, we obtain 
\begin{equation*}
\| \theta^{(k)} \|_\infty =  \| \psi^{(k-1)} \|_\infty \leq (3 \pi/2)  C_1^{k-1} (k!)^{3/2} = (3 \pi/2 C_1) C_1^k (k!)^{3/2} \qquad \mbox{for} \; k \geq 1.
\end{equation*}
Note that $(3 \pi/2 C_1) \leq \pi/2$, since $C_1 \geq 5$, so the previous two lines establish that $\theta$ is in $G_0^{3/2}(\pi/2,C_1)$.    

Note that $\sin(x)$ is in $G^{3/2}_0(1,1)$. From part 2 of Proposition \ref{prop:Gevrey1}, $s(x) = \sin(\theta(x))$ is in $G^{3/2}_0(1,C_2)$, for $C_2 = C_1(1+\pi/2) = (2 + 3 \sqrt{2})(1+\pi/2)$, as claimed.

\end{proof}

\subsubsection{Near-exponential frequency decay of the local-sine basis}

Here, we show that the local sine basis $\{b_{jk}\}_{(j,k)\in \Z \times \N}$ in \eqref{eqn:lsb} satisfies the Fourier decay condition \eqref{uniformbd}. This concludes the verification of the properties of $b_{jk}$, and completes the proof of Proposition \ref{basis:prop}.

For $j \in \Z$, let $B_j(x) := b_j( x \cdot \delta_j + \alpha_j) =  s \left(x \cdot \frac{\delta_j}{\epsilon_j} \right) \cdot s \left( (1-x) \cdot \frac{\delta_j}{\epsilon_{j+1}} \right)$ (see \eqref{eqn:bell1}). By definition of $\epsilon_j$ in \eqref{eqn:eps_def}, $\delta_j/\epsilon_j \leq 3$ and $\delta_j/\epsilon_{j+1} \leq 3$ for all $j$. Therefore, by Lemma \ref{lem:cutoff}, $\| s^{(k)} \|_{L^\infty} \leq C_2^k (k!)^{3/2}$ for $C_2 = (2 + 3 \sqrt{2})(1+\pi/2)$, and by the chain and product rules for differentiation we obtain 
\[
\| B_j^{(k)} \|_{L^\infty} \leq 3^k \cdot 2^k \cdot C_2^k \cdot (k!)^{3/2} = C_3^k (k!)^{3/2} \quad \mbox{ for } k \geq 0,
\]
where $C_3 = 6 C_2 = (12 + 18 \sqrt{2}) (1+\pi/2) $. Thus, $B_j$ is in the Gevrey class $G^{3/2}_0(1,C_3)$. Note that $b_j$ is supported on $[\alpha_j - \epsilon_j, \alpha_{j+1} + \epsilon_{j+1}] \subset [\alpha_j-2\delta_j, \alpha_{j} + 2 \delta_j]$ -- here, we apply \eqref{supp_cond} and note that $\alpha_{j+1} = \alpha_j + \delta_j$, while $\epsilon_j \leq 2 \delta_j/3$ (by \eqref{eqn:eps_def}) and $\epsilon_{j+1} \leq 2 \delta_j/3$ (by the discussion after \eqref{eqn:eps_prop}). Thus, $B_j$ is supported on $[-2,2]$. By applying Proposition \ref{prop:Gevrey_PW}, with $C=1$ and $R = C_3$, we deduce that
\[
\lvert \widehat{B_j}(\xi) \rvert \leq A   \exp \left( - a   | \xi |^{2/3} \right)  \;\; \mbox{for} \; \xi \in \R,
\]
where $A = 12$ and $a = \left[ 2e ((12 + 18 \sqrt{2}) (1+\pi/2) )^{2/3} \right]^{-1} \geq \frac{1}{115}$.

Since $b_j(x) = B_j\left(\frac{x - \alpha_j}{\delta_j}\right)$, by scaling properties of the Fourier transform we conclude that
\begin{equation}
\label{expdecay}
\lvert \widehat{b_j}(\xi) \rvert \leq A  \delta_j \exp \left( - a | \delta_j \xi |^{2/3} \right) \;\; \mbox{for} \; \xi \in \R. 
\end{equation}
Using $\sin ( \gamma) = \frac{1}{2i} \left(e^{i \gamma} - e^{-i \gamma}\right)$ in equation \eqref{eqn:lsb}, and recalling $\delta_j = \alpha_{j+1} - \alpha_j$ (see \eqref{eqn:delta_def}), and the identity $\mathcal{F}(e^{i (x-x_0) \xi_0}f(x)
 )(\xi) = \widehat{f}(\xi - \xi_0) e^{-ix_0\xi_0}$ for the composition of the Fourier transform and a modulation, we obtain   
\begin{align*}
\widehat{b_{jk}}(\xi) = \frac{\sqrt{2}}{2i \sqrt{\delta_j}} \Biggl[ & \widehat{b_j}\left(\xi - \pi (k + 1/2) \delta_j^{-1} \right) \cdot e^{ - i \pi (k + 1/2) \alpha_j/\delta_j } \\
- & \widehat{b_j} \left(\xi + \pi (k + 1/2) \delta_j^{-1} \right) \cdot e^{ i \pi (k + 1/2)  \alpha_j/\delta_j }\Biggr].
\end{align*}
Then by \eqref{expdecay} and the triangle inequality,
\begin{equation}
\label{uniformbd1_a}
\lvert \widehat{b_{jk}} (\xi) \rvert \leq A  \delta_j^{\frac{1}{2}} \sum_{\sigma = \pm 1 } \exp \left( - a     \left\lvert  \delta_j   \xi - \sigma  \pi \left( k + 1/2 \right) \right\rvert^{2/3} \right).
\end{equation}
This completes the proof of \eqref{uniformbd}. Thus, we have completed the proof of  Proposition \ref{basis:prop}.

\section{Technical tools}

\subsection{Estimating the eigenvalues of positive semidefinite compact operators} 

%Given a real Hilbert space $\mathcal H$, and a  compact, positive semidefinite operator $T:\mathcal H\to \mathcal H$, let $\{\lambda_j(T)\}_{j\geq 1}$ denote the sequence of positive eigenvalues of $T$, counted with multiplicity, and sorted in non-increasing order.   
%Let $\{\psi_k\}_{k\in \mathcal I}$  be an orthonormal basis for $\mathcal H$. 
%In the following  lemma we prove that, given a partition of  the orthonormal basis $\{\psi_k\}_{k\in \mathcal I}$ satisfying \eqref{func_lemma:eqn} for some $\epsilon\in (0,1/2)$, one can estimate the number of the eigenvalues of $T$ in the ``transition" region   $(\epsilon, 1-\epsilon)$, and the number of eigenvalues in $(\epsilon,1]$. 

The following lemma is based on a variant of Lemma 1 of \cite{Israel15}, and will allow us to estimate the distribution of the eigenvalues of a positive compact operator, assuming the operator satisfies the Hilbert-Schmidt type condition \eqref{func_lemma:eqn} with respect to some basis. This result will be used in the proof of Theorem \ref{mainthm:cube_convex}.

\begin{lemma}\label{func_anal:lem}
Let $\mathcal{H}$ be a real Hilbert space. Given a positive semidefinite compact operator $T : \mathcal{H} \rightarrow \mathcal{H}$, let $\lambda_j(T)$, $j \geq 1$, be the eigenvalues of $T$, counted with multiplicity, and sorted in non-increasing order. 

Suppose that there exists an orthonormal basis $\{\psi_k\}_{k\in \cI}$ for $\mathcal H$, and suppose the index set $\cI$ can be partitioned as $\cI = \cI_{low} \cup \cI_{res} \cup \cI_{hi}$, with $\cI_{res}$ and $\cI_{low}$ finite sets, such that, for some $\epsilon \in (0,1/2)$, 
\begin{equation}\label{func_lemma:eqn}
\sum_{k\in \cI_{hi}} \|T \psi_k\|^2 + \sum_{k\in \cI_{low}} \|(I-T)\psi_k\|^2\leq \epsilon^2.
\end{equation}

Given $\epsilon \in (0,1/2)$, let $M_\epsilon(T) := \# \{ j : \lambda_j(T) > \epsilon \}$ and $N_\epsilon(T) := \# \{ j : \lambda_j(T) \in (\epsilon, 1 - \epsilon) \}$.
Then 
$$
N_\epsilon(T) \leq  \#(\cI_{res}), \quad \text{and} ~~ | M_\epsilon(T) - \#(\cI_{low})  | \leq \# ( \cI_{res}). 
$$
\end{lemma}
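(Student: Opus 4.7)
The plan is to invoke the Courant--Fischer min-max characterization of the eigenvalues of the compact positive semidefinite operator $T$, using the three mutually orthogonal subspaces $H_\star := \overline{\mathrm{span}}\{\psi_k : k \in \cI_\star\}$ for $\star \in \{low, res, hi\}$ as test spaces. Set $n = \#(\cI_{low})$ and $r = \#(\cI_{res})$, so $\dim H_{low} = n$ and $H_{hi}$ has codimension $n+r$ in $\cH$.

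The first step is to convert the Hilbert--Schmidt-type hypothesis \eqref{func_lemma:eqn} into Rayleigh-quotient bounds on $H_{low}$ and $H_{hi}$. For any unit $\psi = \sum_{k \in \cI_{hi}} c_k \psi_k \in H_{hi}$, expand $T\psi$ and apply the triangle and Cauchy--Schwarz inequalities to get
\[
\|T\psi\| \leq \sum_{k \in \cI_{hi}} |c_k|\,\|T\psi_k\| \leq \Bigl(\sum_k c_k^2\Bigr)^{1/2}\Bigl(\sum_{k \in \cI_{hi}} \|T\psi_k\|^2\Bigr)^{1/2} \leq \epsilon,
\]
so $\langle T\psi,\psi\rangle \leq \|T\psi\| \leq \epsilon$. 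The same computation applied to $I-T$ in place of $T$ shows that every unit $\psi \in H_{low}$ satisfies $\|(I-T)\psi\| \leq \epsilon$, hence $\langle T\psi,\psi\rangle \geq 1-\epsilon$.

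Now invoke the two dual forms of min-max. Testing the max-min formula against the $n$-dimensional subspace $H_{low}$ gives $\lambda_n(T) \geq \min_{\psi \in H_{low}, \|\psi\|=1}\langle T\psi,\psi\rangle \geq 1-\epsilon$, so at least $n$ eigenvalues of $T$ lie in $[1-\epsilon, 1]$; in particular $M_\epsilon(T) \geq n$. Testing the min-max formula against the subspace $H_{hi}$ of codimension $n+r$ gives $\lambda_{n+r+1}(T) \leq \max_{\psi \in H_{hi}, \|\psi\|=1}\langle T\psi,\psi\rangle \leq \epsilon$, so at most $n+r$ eigenvalues exceed $\epsilon$, i.e.\ $M_\epsilon(T) \leq n+r$. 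Combining these two bounds yields $|M_\epsilon(T) - n| \leq r$.

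Finally, the bound on $N_\epsilon(T)$ follows by bookkeeping. Splitting the eigenvalues exceeding $\epsilon$ into those in $(\epsilon,1-\epsilon)$ and those in $[1-\epsilon,\infty)$ gives the identity $N_\epsilon(T) = M_\epsilon(T) - \#\{j : \lambda_j(T) \geq 1-\epsilon\}$, and the two bounds $M_\epsilon(T) \leq n+r$ and $\#\{j : \lambda_j(T) \geq 1-\epsilon\} \geq n$ already established combine to give $N_\epsilon(T) \leq r$. The entire argument is essentially mechanical once the Cauchy--Schwarz conversion of \eqref{func_lemma:eqn} into pointwise Rayleigh-quotient control is in hand; the only subtlety is keeping strict versus non-strict inequalities straight in the counting identity for $N_\epsilon$, and the argument does not require positivity of $T$ beyond its use in making all eigenvalues real and the min-max formulas available.
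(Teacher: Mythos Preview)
Your proof is correct and follows essentially the same route as the paper's: both convert the hypothesis \eqref{func_lemma:eqn} into Rayleigh-quotient control on $H_{low}$ and $H_{hi}$, then apply Courant--Fischer to pin down $\lambda_n(T) \geq 1-\epsilon$ and $\lambda_{n+r+1}(T) \leq \epsilon$. The only cosmetic difference is that the paper packages \eqref{func_lemma:eqn} as a Hilbert--Schmidt (hence operator-norm) bound on the error operator $T_{err} = T - TP_{res} - P_{low}$, whereas you apply Cauchy--Schwarz directly to the basis expansion; the resulting inequalities and the subsequent min-max argument are identical.
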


\begin{proof}
Let $K_1 = \#(\cI_{low})$ and $K_2 = \#(\cI_{low}) + \#(\cI_{res})$. We will prove that
\begin{equation}\label{eigen_bd:eqn}
\lambda_{K_1}(T) \geq 1 - \epsilon > \epsilon, \;\; \mbox{and } \lambda_{K_2+1}(T) \leq \epsilon.
\end{equation}
From these bounds, it follows that $\{ j : \lambda_j(T) \in (\epsilon, 1 - \epsilon) \} \subset \{ K_1 + 1,\cdots, K_2 \}$, and thus, $N_\epsilon(T) \leq K_2 - K_1$ and $K_1 \leq M_\epsilon(T) \leq K_2$. So, the desired bounds on $M_\epsilon(T)$ and $N_\epsilon(T)$ follow from \eqref{eigen_bd:eqn}.

Condition \eqref{func_lemma:eqn} is equivalent to the statement that
\[
\| T - T P_{res} - P_{low} \|_{HS} \leq \epsilon,
\]
where $P_{low} : \cH \rightarrow X_{low}$ and $P_{res} : \cH \rightarrow X_{res}$ are  orthogonal projection operators for the orthogonal subspaces $X_{low} := \mathrm{span} \{ \psi_k \}_{k \in \cI_{low}}$ and $X_{res} := \mathrm{span} \{ \psi_k \}_{k \in \cI_{res}}$ in $\cH$, and where $\| \cdot \|_{HS}$ denotes the Hilbert-Schmidt norm. Recall that the operator norm is bounded by the Hilbert-Schmidt norm, i.e. $\| T_0 \| \leq \| T_0 \|_{HS}$ for any linear operator $T_0 : \cH \rightarrow \cH$.  Thus, condition \eqref{func_lemma:eqn} implies
\begin{equation}
    \label{func_lemma2:eqn}
    T = T P_{res} +  P_{low} + T_{err}, \mbox{ with } \| T_{err} \| \leq \epsilon.
\end{equation}

We apply the variational characterization of eigenvalues (the Courant-Fischer-Weyl lemma) to prove \eqref{eigen_bd:eqn}. Recall that for any positive semidefinite (self-adjoint) compact  operator $T$ on $\cH$,
\begin{equation}\label{MinMaxChar}
\lambda_j(T) = \sup_{ \dim (W) = j} \min_{x \in W, \; \|x \| = 1}  \langle Tx,x \rangle,
\end{equation}
where the supremum is over all $j$-dimensional subspaces $W \subset \cH$.

We first estimate $\lambda_{K_1}(T)$. Note that $\dim (X_{low}) = \# \cI_{low} = K_1$. For $x \in X_{low}$ with $\| x \| = 1$, we have $P_{res}x = 0$ and $P_{low}x = x$, thus, from \eqref{func_lemma2:eqn},
\[
\langle Tx, x \rangle = \langle x,x \rangle + \langle T_{err} x, x \rangle \geq 1 - \epsilon.
\]
By \eqref{MinMaxChar}, this implies $\lambda_{K_1}(T) \geq 1-\epsilon$.

We next estimate $\lambda_{K_2+1}(T)$. Let $W$ be an arbitrary subspace of $\cH$ with $\dim (W) = K_2 + 1$. Let $X_{hi}$ be the closure of $\mathrm{span} \{ \psi_k  \}_{k \in \mathcal{I}_{hi}}$. The orthogonal decomposition $\cH = X_{low} \oplus X_{res} \oplus X_{hi}$ implies that $\mathrm{codim}(X_{hi}) = \dim(X_{low}) + \dim(X_{res}) =  \# \mathcal{I}_{low} + \# \mathcal{I}_{res} = K_2$. Since $\mathrm{codim}(X_{hi}) < \dim(W)$, it holds that $X_{hi} \cap W \neq \{0\}$. Fix $y \in X_{hi} \cap W$ with $\| y \| = 1$. Because $y \in X_{hi}$, also $y$ belongs to the kernel of $T P_{res} + P_{low}$. Thus, from \eqref{func_lemma2:eqn}, $\langle Ty,y \rangle  = \langle T_{err} y, y \rangle \leq \epsilon$. Since $W$ is an arbitrary $(K_2+1)$-dimensional subspace of $\cH$, we have, by \eqref{MinMaxChar}, that $\lambda_{K_2+1}(T) \leq \epsilon$.

This concludes the proof of \eqref{eigen_bd:eqn}, and with it, the proof of the lemma.

\end{proof}

\subsection{Counting lattice points in a convex body}\label{sec:lattice}

The main result of this section is Corollary \ref{cor:lat}, which will be used in the proofs of Lemma \ref{count1:lem} and Lemma \ref{count2:lem} in Section \ref{wave-packet}.

To start, in Lemma \ref{steiner:lem} below, we recall the Steiner formula \eqref{steiner1:eqn} for the volume of the outer neighborhood of a convex body $K$, and derive a related inequality \eqref{steiner2:eqn} for the volume of the inner neighborhood of $K$. 
%These geometric inequalities will be used in the proofs of Lemma \ref{lem:lat} and Corollary \ref{cor:lat}.

By a \emph{convex body} in $\R^d$, we  mean a compact and convex subset of $\R^d$. A convex body $K$ is said to be \emph{symmetric} provided that $x \in K \implies -x \in K$. We write $B(r)$ for the closed ball in $\R^d$ centered at $0$ with radius $r$.

\begin{lemma}\label{steiner:lem}
Let $K$ be a symmetric convex body in $\R^d$.
Then $r \mapsto \mu_d(K \oplus B(r))$ is a polynomial function of degree at most $d$, with coefficients determined by $K$ and $d$. Specifically,
\begin{equation}\label{steiner1:eqn}
\mu_d(K \oplus B(r)) = \mu_d(K) + \sum_{j=0}^{d-1} \kappa_{d-j}  V_{j}(K)  r^{d-j}, \qquad \mbox{for all } r > 0,
\end{equation}
where $\kappa_\ell$ is the $\ell$-dimensional Lebesgue measure of the unit ball in $\R^\ell$, and $V_{j}(K) \geq 0$ is a coefficient, called the \underline{$j$'th intrinsic volume} of $K$. 

Further:
\begin{equation}\label{steiner2:eqn}
\mu_d(K \ominus B(r)) \geq \mu_d(K) - \sum_{j=0}^{d-1} \kappa_{d-j} V_j(K) r^{d-j}  ,  \qquad \mbox{for all } r > 0.
\end{equation}
\end{lemma}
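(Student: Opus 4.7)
\textbf{Proof proposal for Lemma \ref{steiner:lem}.}

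The formula \eqref{steiner1:eqn} is the classical Steiner formula for the outer parallel body of a convex body; I would invoke it directly from standard convex geometry (e.g.\ Schneider's monograph on convex bodies), along with the accompanying facts that the intrinsic volumes satisfy $V_j(K) \geq 0$ for any convex $K$ and that the formula has no symmetry assumption in its hypotheses.

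For \eqref{steiner2:eqn}, the plan is to bound the ``inner tube'' volume $\mu_d(K) - \mu_d(K \ominus B(r))$ by $r \cdot \mathcal{H}^{d-1}(\partial K)$, and then observe that this is already dominated by a single term in the Steiner sum. First, I would apply the coarea formula to the $1$-Lipschitz function $\delta(x) := \mathrm{dist}(x, \partial K)$ on the interior of $K$ (where $|\nabla \delta| = 1$ almost everywhere), which yields
\[
\mu_d(K) - \mu_d(K \ominus B(r)) = \int_0^r \mathcal{H}^{d-1}(\partial(K \ominus B(s))) \, ds,
\]
using that $\{x \in K : \delta(x) = s\} = \partial(K \ominus B(s))$, with the convention $\mathcal{H}^{d-1}(\partial \emptyset) = 0$ when $s$ exceeds the inradius of $K$.

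Second, since $K \ominus B(s) = \bigcap_{|y| \leq s}(K - y)$ is an intersection of convex translates of $K$, it is convex and contained in $K$. The classical monotonicity of surface area for nested convex bodies then gives $\mathcal{H}^{d-1}(\partial(K \ominus B(s))) \leq \mathcal{H}^{d-1}(\partial K)$ for every $s \geq 0$. Integrating this pointwise bound over $s \in [0, r]$ gives
\[
\mu_d(K) - \mu_d(K \ominus B(r)) \leq r \cdot \mathcal{H}^{d-1}(\partial K) = \kappa_1 V_{d-1}(K) \, r,
\]
where the surface area of $K$ is identified with $\kappa_1 V_{d-1}(K)$ by differentiating the already-established \eqref{steiner1:eqn} at $r = 0$. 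Since $\kappa_{d-j}, V_j(K) \geq 0$ for every $j$, the single term $\kappa_1 V_{d-1}(K) r$ is bounded above by the full Steiner sum $\sum_{j=0}^{d-1} \kappa_{d-j} V_j(K) r^{d-j}$, which rearranges to \eqref{steiner2:eqn}.

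The main technical hurdle will be citing the two classical geometric ingredients—the coarea formula for a distance function (Federer's theorem for Lipschitz maps) and the monotonicity of surface area under inclusion of convex bodies (a consequence of the monotonicity of mixed volumes). I would also briefly comment on the edge case $K \ominus B(s) = \emptyset$, where the identity and inequality remain valid under the natural convention $\mathcal{H}^{d-1}(\partial \emptyset) = 0$. Notably, the symmetry assumption on $K$ plays no role in the argument above: the same proof works for an arbitrary convex body, and the symmetry is presumably retained in the hypothesis only for consistency with the rest of the paper.
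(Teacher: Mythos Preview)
Your proposal is correct, but your route to \eqref{steiner2:eqn} differs from the paper's. The paper proves the inner-versus-outer tube comparison
\[
\mu_d\bigl(K \setminus (K \ominus B(r))\bigr) \;\leq\; \mu_d\bigl((K \oplus B(r)) \setminus K\bigr)
\]
directly: it passes to polytopes by density, covers the inner tube by inward prisms $F_r^-$ over the facets, and observes that the outer tube contains the corresponding \emph{pairwise disjoint} outward prisms $F_r^+$ of equal volume; \eqref{steiner2:eqn} then follows by subtracting \eqref{steiner1:eqn}. Your argument instead bounds the inner tube by the single quantity $r\cdot\mathcal{H}^{d-1}(\partial K)=\kappa_1 V_{d-1}(K)\,r$ via the coarea formula and monotonicity of surface area for nested convex bodies, and then absorbs this into the full Steiner sum by nonnegativity. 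Your approach leans on heavier off-the-shelf tools (Federer's coarea formula, monotonicity of mixed volumes) but actually yields a sharper intermediate inequality; the paper's approach is more elementary and self-contained, reducing everything to a transparent picture for polytopes. Both arguments work for an arbitrary convex body, and you are right that the symmetry hypothesis plays no role here.
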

\begin{proof}
Equation \eqref{steiner1:eqn} is the classical Steiner formula -- see \cite{Sch1} for details.

Now, \eqref{steiner2:eqn} follows from \eqref{steiner1:eqn} and the following inequality, which is valid for any convex body $K$ in $\R^d$ and $r > 0$,
\begin{equation}\label{inner_vs_outer:eqn}
    \mu_d( K \setminus (K \ominus B(r))) \leq \mu_d((K \oplus B(r)) \setminus K).
\end{equation}
By density of polytopes in the set of convex bodies, it suffices to prove \eqref{inner_vs_outer:eqn} for a polytope $K$. The region $K \setminus (K \ominus B(r))$ is the set of all $x \in K$ with  $d(x,\partial K) < r$, and $(K \oplus B(r)) \setminus K$ is the set of all $x \in \R^d \setminus K$ with $d(x,\partial K) \leq r$. We can write $\partial K$ as a union of facets $F$ of dimension $d-1$. Each facet carries a unit normal $n_F \in \R^d$, $|n_F| = 1$, directed away from $K$ (i.e., $n_F$ is an outward normal to $\partial K$). Then
\[
K \setminus (K \ominus B(r)) \subset \bigcup_F F_r^-,
\]
where the union is over all facets $F$, and $F_r^- := \{ x - t n_F : x \in F, 0 \leq t \leq r \}$ is a prism in $\R^d$ with base $F$ and thickness $r$ directed toward $K$. Letting $F_r^+ := \{ x + t n_F : x \in F, 0 \leq t \leq r \}$ denote the corresponding prism directed away from $K$, we have
\[
(K \oplus B(r)) \setminus K \supset \bigcup_F F_r^+.
\]
By convexity of $K$ the family $\{F_r^+ : F \mbox{ facet of } K\}$ is disjoint. Because $\mu_d(F_r^-) = \mu_d(F_r^+)$ for all facets $F$, the inequality \eqref{inner_vs_outer:eqn} follows by subadditivity of Lebesgue measure.

\begin{comment}We now apply \eqref{steiner1:eqn} to prove \eqref{steiner2:eqn}. Write $r_{1}(K) \in [0.\infty)$ to denote the inner radius of $K$, defined by
\[
r_1(K) = \sup \{ r \geq 0 : B(r) \subset K \}.
\]
By compactness of $K$, the supremum is achieved, thus, $B(r_1(K)) \subset K$. For $\rho \leq r_{1}(K)$, one has $0 \in K \ominus B(\rho)$ -- in particular, $K \ominus B(\rho) \neq \emptyset$. Further, one can check that $(K \ominus B(0,\rho) ) \oplus B(0, \rho) = K$ for $\rho \leq r_1(K)$. We apply Steiner's formula \eqref{steiner1:eqn} with $K \ominus B(\rho)$ in place of $K$, to give
\[
\mu_d(K) = \mu_d(K \ominus B(\rho)) + \sum_{j=0}^{d-1}  \gamma_{d-j} V_j(K \ominus B(\rho)) \rho^{d-j} \qquad (\rho \leq r_{1}(K)).
\]
Intrinsic volumes are inclusion monotone, i.e., $V_j(K_1) \leq V_j(K_2)$ for $K_1 \subset K_2$ -- this is because intrinsic volumes can be identified as mixed volumes, and mixed volumes are inclusion monotone (see \cite{Sch1}). Hence, $V_j(K \ominus B(\rho)) \leq V_j(K)$. Thus,
\[
\mu_d(K \ominus B(\rho)) \geq \mu_d(K) - \sum_{j=0}^{d-1}  \gamma_{d-j} V_j(K) \rho^{d-j}  \qquad (\rho \leq r_{1}(K)).
\]
If $\rho = r_{1}(K)$ then $K \ominus B(\rho)$ is convex with an empty interior, and therefore $\mu_d(K \ominus B(\rho))=0$; thus, the RHS of the above inequality is $\leq 0$ for $\rho = r_{1}(K)$. Also, the RHS is a decreasing function of $\rho$, and the LHS is $0$ if $\rho > r_{1}(K)$ since then $K \ominus B(\rho) = \emptyset$. Therefore, the inequality persists for $\rho > r_{1}(K)$, and this proves \eqref{steiner2:eqn}.
\end{comment}

\end{proof}

%\begin{remark}
%Another proof of \eqref{steiner2:eqn} starts by noting that \eqref{steiner1:eqn} implies $\mu_d((K \oplus B(r)) \setminus K) = \sum_{j=0}^{d-1} \kappa_{d-j} V_j(K) r^{d-j}$. It is geometrically obvious that $\mu_d( K \setminus (K \ominus B(r))) \leq \mu_d( (K \oplus B(r)) \setminus K)$, since the ``outer half'' of the $r$-neighborhood of $\partial K$ has larger volume than the ``inner half'' of the $r$-neighborhood of $\partial K$ whenever $K$ is convex. This inequality and the previous identity together imply \eqref{steiner2:eqn}. We could not find a reference in the literature for the ``obvious'' inequality, so, for the sake of rigor, we included the slightly longer proof of \eqref{steiner2:eqn} just above.
%\end{remark}

For context, we mention an alternate characterization for the intrinsic volumes $V_j(K)$. The leading term $\mu_d(K)$ in \eqref{steiner1:eqn} is the $d$'th intrinsic volume $V_d(K)$. Further, $V_0(K) = 1$, and $V_{d-1}(K) = \frac{1}{2} \cH_{d-1}(\partial K)$ is half the $(d-1)$-dimensional Hausdorff measure of the boundary of $K$. In general, $V_j(K)$  is a constant multiple of the average diameter of a (random) projection of the body $K$ onto a $j$-dimensional subspace. See \cite{Sch1} for details. 
%We do not use these remarks in the remainder of the paper, they are meant only to describe the geometric interpretations of the coefficients in the Steiner formula.

One can upper bound the intrinsic volumes of a symmetric convex body $K$ via the outer radii of $K$. For $1 \leq j \leq d$, the \emph{$j$'th outer radius} of $K$, written $R_j(K)$, is the smallest $R$ such that $K \subset ( B(R) \cap W) \oplus W^\perp$ for some $j$-dimensional subspace $W \subset \R^d$ -- here, $W^\perp$ is the orthogonal complement of $W$. Equivalently: $R_j(K)$ is the smallest $R$ such that $K$ is contained in a cylindrical tube with cross section given by a $j$-dimensional ball of radius $R$. Observe, 
\[
R_1(K) \leq R_2(K) \leq \cdots \leq R_d(K) = \diam(K)/2.
\]
To illustrate the definition: Let $E = E(r_1,\cdots,r_d)$ be the ellipsoid in $\R^d$ given by all points $x \in \R^d$ such that $\sum_j x_j^2/r_j^2 \leq 1$. Then $R_1(E),\cdots, R_d(E)$ is the non-decreasing rearrangement of $r_1,\cdots,r_d$.

In Theorem 1.2 of \cite{Henk08}, the following bound is given:
\[
V_j(K) \leq 2^j s_j(R_1(K),\cdots,R_d(K)) \qquad \mbox{ for } j=0,\cdots,d,
\]
where $s_j(x_1,\cdots,x_d)$ is the $j$'th elementary symmetric polynomial, 
\[
s_j(x_1,\cdots,x_d) := \sum_{i_1 < i_2 < \cdots < i_j} \prod_{k=1}^j x_{i_k},
\]
and we set $s_0(x_1,...,x_d) = 1$. Since $R_1(K) \leq \cdots \leq R_d(K)$, we conclude
\begin{equation}\label{intr_vol_bd:eqn}
\begin{aligned}
&V_j(K) \leq 2^j \binom{d}{j} \prod_{m=d-j+1}^d R_m(K) =: C_j(K), \qquad 1 \leq j \leq d, \\
&V_0(K) = 1 =: C_0(K)
\end{aligned}
\end{equation}

We define $\dist_\infty(x,\Omega) := \inf_{y \in \Omega} \| x - y \|_{\infty}$ for  $\Omega \subset \R^d$ and $x \in \R^d$.

\begin{lemma}\label{lem:lat}
Let $\mathcal{L} = x_0 +  \Z^d$ be a lattice in $\R^d$, and let $K$ be a symmetric convex body in $\R^d$. For $\eta \geq 1$,
\[
\begin{aligned}
&|\# \{ x \in \cL: \dist_\infty(x,\R^d \setminus K) \geq \eta \} - \mu_d(K) | \leq e(K,\eta)\\
& \# \{ x \in \cL: \dist_\infty(x,\partial K) <  \eta \} \leq 2 e(K,\eta).
\end{aligned}
\]
where, with $C_j(K)$ defined in \eqref{intr_vol_bd:eqn}, we set
\begin{equation}\label{eqn:error_def}
e(K,\eta) := \sum_{j=0}^{d-1} (2 \sqrt{d})^{d-j} \kappa_{d-j} C_j(K) \eta^{d-j}.
\end{equation}

\end{lemma}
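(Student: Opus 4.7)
\medskip

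\textbf{Proof plan.} The plan is to use the standard unit-cube tiling associated to the lattice $\cL$: for each $x \in \cL$, let $C_x := x + [-1/2,1/2]^d$, so that $\{C_x\}_{x \in \cL}$ tiles $\R^d$ (up to measure zero) and $\mu_d(C_x)=1$. I will transfer each lattice count into a volume estimate on a slightly thickened or thinned version of $K$, then invoke the Steiner-type inequalities in Lemma \ref{steiner:lem} together with the intrinsic-volume bound \eqref{intr_vol_bd:eqn} to control the error. Throughout, I write $B_\infty(r) := [-r,r]^d$ and use the two-sided inclusion $B_2(r) \subset B_\infty(r) \subset B_2(\sqrt{d}\,r)$, which converts $\ell^\infty$ offsets of $K$ into $\ell^2$ offsets (the form required by Lemma \ref{steiner:lem}).

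For the first inequality, let $N_1 := \#\{x \in \cL : \dist_\infty(x,\R^d \setminus K) \geq \eta\}$. If $x$ belongs to this set then every $y \in C_x$ satisfies $\dist_\infty(y,\R^d \setminus K) \geq \eta - 1/2 \geq 1/2 > 0$, hence $C_x \subset K \ominus B_\infty(\eta - 1/2) \subset K$. Since the $C_x$ are essentially disjoint, summing volumes yields $N_1 \leq \mu_d(K)$. Conversely, for any $y \in K \ominus B_\infty(\eta + 1/2)$, the unique lattice point $x(y) \in \cL$ with $y \in C_{x(y)}$ satisfies $\dist_\infty(x(y),\R^d \setminus K) \geq \eta + 1/2 - 1/2 = \eta$, so
\[
 K \ominus B_\infty(\eta + 1/2) \subset \bigcup_{x \text{ counted in } N_1} C_x, \qquad \text{giving} \qquad N_1 \geq \mu_d(K \ominus B_\infty(\eta+1/2)).
\]
Using $B_\infty(\eta+1/2) \subset B_2(\sqrt{d}(\eta+1/2))$ and the Steiner inequality \eqref{steiner2:eqn}, followed by $V_j(K) \leq C_j(K)$ from \eqref{intr_vol_bd:eqn} and the bound $\eta + 1/2 \leq 2\eta$ (valid since $\eta \geq 1$), the error becomes
\[
 \mu_d(K) - N_1 \leq \sum_{j=0}^{d-1} \kappa_{d-j} C_j(K) (\sqrt{d}(\eta+1/2))^{d-j} \leq e(K,\eta),
\]
which, combined with $N_1 \leq \mu_d(K)$, yields $|N_1 - \mu_d(K)| \leq e(K,\eta)$.

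For the second inequality, let $N_2 := \#\{x \in \cL : \dist_\infty(x,\partial K) < \eta\}$. For each such $x$, every $y \in C_x$ satisfies $\dist_\infty(y,\partial K) < \eta + 1/2$. Convexity of $K$ implies that for $y \notin K$ the nearest point of $K$ lies on $\partial K$, so $\dist_\infty(y,\partial K) = \dist_\infty(y,K)$; consequently the $(\eta+1/2)$-neighborhood of $\partial K$ is contained in $(K \oplus B_\infty(\eta+1/2)) \setminus (K \ominus B_\infty(\eta+1/2))$. Since the $C_x$ are essentially disjoint,
\[
 N_2 \leq \mu_d\bigl(K \oplus B_\infty(\eta+1/2)\bigr) - \mu_d\bigl(K \ominus B_\infty(\eta+1/2)\bigr).
\]
Applying $B_\infty(r) \subset B_2(\sqrt{d}\,r)$ to both offsets and then invoking \eqref{steiner1:eqn} and \eqref{steiner2:eqn} gives the upper bound $2 \sum_{j=0}^{d-1} \kappa_{d-j} V_j(K)(\sqrt{d}(\eta+1/2))^{d-j}$, which is $\leq 2 e(K,\eta)$ after \eqref{intr_vol_bd:eqn} and $\eta+1/2 \leq 2\eta$.

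The main technical nuisance, rather than a deep obstacle, is tracking the constants so that the factor $(2\sqrt{d})^{d-j}$ appearing in \eqref{eqn:error_def} is exactly what emerges. This is accomplished cleanly by the two observations above: the $\sqrt{d}$ comes from converting $\ell^\infty$ offsets to $\ell^2$ offsets (so that Lemma \ref{steiner:lem} applies), and the factor of $2$ comes from absorbing $\eta + 1/2$ into $2\eta$ for $\eta \geq 1$. Everything else is bookkeeping.
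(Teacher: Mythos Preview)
Your proof is correct and follows essentially the same approach as the paper: both arguments associate unit cubes to lattice points, sandwich the resulting regions between $K \ominus B(2\sqrt d\,\eta)$ and $K \oplus B(2\sqrt d\,\eta)$ (after converting $\ell^\infty$ offsets to $\ell^2$ offsets), and then apply the Steiner inequalities \eqref{steiner1:eqn}--\eqref{steiner2:eqn} together with \eqref{intr_vol_bd:eqn}. The only organizational difference is that the paper observes $X_1$ and $X_2$ are disjoint with $X_1 \cup X_2 = \{x \in \cL : \dist_\infty(x,K) < \eta\}$ and treats both counts via a single chain of set inclusions, whereas you bound $N_1$ and $N_2$ by separate direct arguments; as a byproduct your argument yields the slightly sharper one-sided inequality $N_1 \leq \mu_d(K)$.
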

\begin{proof}

For any finite set $Y \subset \cL$, let
$D(Y) := \bigcup_{y \in \cL} (y + [-1/2,1/2]^d)$. Note that  $D(X) \subset D(Y)$ if $X \subset Y \subset \cL$, and $\mu_d(D(Y)) = \#Y$.

Consider the disjoint subsets of $\cL$, $X_1 = \{ x \in \cL: \dist_\infty(x,\R^d \setminus K) \geq \eta \}$ and $X_2 = \{ x \in \cL: \dist_\infty(x,\partial K) <  \eta \}$ satisfying
\begin{equation}\label{latlem2:eqn}
X_1 \cup X_2 = \{ x \in \cL:  \dist_\infty(x, K) < \eta \}.
\end{equation}
Our task is to estimate $\# X_1$ and $ \# X_2$.

We establish that, 
\begin{equation}\label{set-inclusions}
K \ominus B (2\sqrt{d}\eta)  \subset D(X_1) \subset D(X_1\cup X_2) \subset K \oplus B(2\sqrt{d}\eta).
\end{equation}

For the proof of the first set inclusion, let $z \in K \ominus B(2\sqrt{d}\eta)$. Then $\dist(z, \R^d \setminus K) \geq 2\sqrt{d}\eta$. Note: $\dist(\cdot,\cdot) \leq \sqrt{d} ~ \dist_\infty(\cdot,\cdot)$. Therefore, $\dist_\infty(z, \R^d \setminus K) \geq 2 \eta$. Any point of $\R^d$ is within $\ell^\infty$ distance of $1/2$ to a point of $\mathcal{L}$. So there exists $p \in  \mathcal{L}$ with $|z-p|_\infty\leq 1/2$. By the triangle inequality, $\dist_\infty(p, \R^d \setminus K) \geq 2\eta - 1/2 > \eta$. Since $p \in \cL$, then $p \in X_1$ by definition of $X_1$. Since $|z - p |_\infty \leq 1/2$, also $z \in p + [-1/2,1/2]^d$, so that $z \in D(X_1)$. This proves the first set inclusion. The second set inclusion is trivial. For the third set inclusion, let $w \in D(X_1 \cup X_2)$ -- then, $w \in q + [-1/2,1/2]^d$ for some $q \in X_1 \cup X_2$. Then $q \in \mathcal{L}$ and $\dist_\infty(q,K) < \eta$ by \eqref{latlem2:eqn}. But $| q - w |_\infty \leq 1/2$. So, by the triangle inequality, $\dist_\infty(w,K) \leq \eta + 1/2 \leq 2 \eta $ -- hence, $\dist(w,K) \leq 2\sqrt{d} \eta$ . Therefore, $w \in K \oplus B(2\sqrt{d}\eta)$, proving the third set inclusion.

From \eqref{set-inclusions}, $\mu_d(D(X)) = \#(X)$, and \eqref{steiner1:eqn}--\eqref{steiner2:eqn}, a volume comparison gives
\[
\begin{aligned}
&\mu_d(K)  - \widetilde{e}(K,\eta)  \leq \mu_d(K \ominus B(2\sqrt{d}\eta))  \leq  \#(X_1) \leq \#(X_1 \cup X_2) \\
& \qquad\qquad\qquad \leq \mu_d(K \oplus B (2\sqrt{d}\eta)) \leq \mu_d(K) + \widetilde{e}(K,\eta), \\
& \mbox{with } \widetilde{e}(K,\eta) :=  \sum_{j=0}^{d-1} (2 \sqrt{d})^{d-j} \kappa_{d-j} V_j(K) \eta^{d-j}.
\end{aligned}
\]
Thus, using that $X_1$ and $X_2$ are disjoint,
\[
| \#(X_1) - \mu_d(K)| \leq \widetilde{e}(K,\eta), \mbox{ and } \#(X_2)  = \#(X_1 \cup X_2) - \#(X_1) \leq 2 \widetilde{e}(K,\eta).
\]
Because $V_j(K) \leq C_j(K)$ (see \eqref{intr_vol_bd:eqn}), we have that $\widetilde{e}(K,\eta) \leq e(K,\eta)$ with $e(K,\eta)$ defined in \eqref{eqn:error_def}. This completes the proof of the result.

\end{proof}

If $K$ is contained in an ellipsoid $ E =  E(r_1,\cdots,r_d)$ then $R_j(K) \leq R_j(E)$ for all $j$, and hence also $C_j(K) \leq C_j(E)$. Given that $R_1(E),\cdots,R_d(E)$ is the nondecreasing rearrangement of $r_1,\cdots, r_d$, from \eqref{intr_vol_bd:eqn}, we obtain:
\[
C_j(K) \leq C_j(E) = 2^j \binom{d}{j} \max_{i_1 < \cdots < i_j} r_{i_1} \cdots  r_{i_j} \leq 2^{j+d} \max_{i_1 < \cdots < i_j} r_{i_1} \cdots  r_{i_j}.
\]
In conjunction with the previous lemma, this estimate gives:
\begin{corollary}\label{cor:lat}
Suppose $K \subset E(r_1,\cdots,r_d)$ for some $r_1,\cdots,r_d > 0$.  For $\eta \geq 1$, 
\[
\begin{aligned}
&|\# \{ x \in \cL: \dist_\infty(x,\R^d \setminus K) \geq \eta \}  - \mu_d(S)| \leq e_0(\eta), \\
&\# \{ x \in \cL: \dist_\infty(x,\partial K) <  \eta \}  \leq 2 e_0(\eta),\\
&\mbox{where  }
e_0(\eta) = 4^d \sum_{j=0}^{d-1} (\sqrt{d})^{d-j}  \kappa_{d-j} \cdot ( \max_{i_1 < \cdots < i_j} r_{i_1} \cdots r_{i_j}) \eta^{d-j}.
\end{aligned}
\]

\end{corollary}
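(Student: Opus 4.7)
The statement is an essentially immediate corollary of Lemma \ref{lem:lat} combined with the chain of bounds for the intrinsic volumes $V_j(K)$ already recorded in the paragraphs preceding the corollary. The plan is to take the two inequalities from Lemma \ref{lem:lat} as a black box and bound the error term $e(K,\eta) = \sum_{j=0}^{d-1} (2\sqrt{d})^{d-j}\kappa_{d-j} C_j(K) \eta^{d-j}$ by the simpler quantity $e_0(\eta)$ stated in the corollary, using the hypothesis $K \subset E := E(r_1,\dots,r_d)$.

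First, I would use monotonicity of the outer radii under inclusion: since $K\subset E$, one has $R_j(K)\le R_j(E)$ for every $1\le j\le d$, and hence $C_j(K) \le C_j(E)$ by the definition \eqref{intr_vol_bd:eqn}. Next, I would invoke the remark (given just before the corollary) that $R_1(E),\dots,R_d(E)$ is precisely the non-decreasing rearrangement of $r_1,\dots,r_d$. This identifies $\prod_{m=d-j+1}^d R_m(E) = \max_{i_1 < \cdots < i_j} r_{i_1}\cdots r_{i_j}$ as the largest product of $j$ of the $r_i$'s, and therefore
\[
C_j(K) \;\le\; 2^j \binom{d}{j} \max_{i_1<\cdots<i_j} r_{i_1}\cdots r_{i_j} \;\le\; 2^{j+d} \max_{i_1<\cdots<i_j} r_{i_1}\cdots r_{i_j},
\]
where in the last inequality I use $\binom{d}{j} \le 2^d$. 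For $j=0$ the inequality is trivial since $C_0(K)=1$ and the empty product equals $1$.

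Substituting this into $e(K,\eta)$ gives
\[
e(K,\eta) \;\le\; \sum_{j=0}^{d-1} (2\sqrt d)^{d-j} \kappa_{d-j} \cdot 2^{j+d} \max_{i_1<\cdots<i_j} r_{i_1}\cdots r_{i_j}\cdot \eta^{d-j}
\;=\; 2^{2d} \sum_{j=0}^{d-1} (\sqrt d)^{d-j}\kappa_{d-j} \max_{i_1<\cdots<i_j} r_{i_1}\cdots r_{i_j}\cdot \eta^{d-j},
\]
and $2^{2d} = 4^d$, so the right-hand side equals $e_0(\eta)$ as defined in the corollary. Applying Lemma \ref{lem:lat} with this bound yields both inequalities stated in the corollary (the first with $\mu_d(K)$ in place of the typo $\mu_d(S)$).

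There is no real obstacle here; the only mildly delicate point is bookkeeping the exponents $2^{d-j}\cdot 2^{j+d} = 2^{2d}$, which cleans up the constant to the advertised $4^d$. All monotonicity and rearrangement facts about outer radii and intrinsic volumes are quoted from the preceding discussion, so the proof is a one-line substitution after these preparatory bounds are in place.
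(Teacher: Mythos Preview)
Your proposal is correct and follows essentially the same approach as the paper: the corollary is obtained by substituting the bound $C_j(K)\le 2^{j+d}\max_{i_1<\cdots<i_j} r_{i_1}\cdots r_{i_j}$ (derived in the preceding paragraph from $K\subset E$ and $\binom{d}{j}\le 2^d$) into the error term $e(K,\eta)$ of Lemma~\ref{lem:lat}, and collecting the powers of $2$ as $4^d$. You are also right that $\mu_d(S)$ in the statement is a typo for $\mu_d(K)$.
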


\subsection{Integral estimates on the envelope function}

The   function  $\Psi_a(t) = \exp( - a |t|^{2/3})$, for $a = 1/115$, is used   in \eqref{Fourier_decay_CM:eqn}  to control the Fourier transform of the local sine basis functions. Here we present some integral inequalities for this function.
\begin{lemma} 
There is a constant $C_d > 0$ such that for any $\eta > 1$, $a \in (0,1)$,
\begin{equation}\label{Psi_bd}
\int_{x \in \R^d : |x| > \eta} \Psi_a(|x|) dx \leq C_d a^{-(3/2)d} \Psi_a(\eta / 4).
\end{equation}
\end{lemma}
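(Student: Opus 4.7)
The plan is to reduce the integral to a one-dimensional integral via spherical coordinates and then bound the resulting incomplete gamma integral.

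First, I would switch to spherical coordinates, which gives
$$\int_{|x| > \eta} \Psi_a(|x|) \, dx = \omega_{d-1} \int_\eta^\infty e^{-a r^{2/3}} r^{d-1} \, dr,$$
where $\omega_{d-1}$ is the surface area of the unit sphere in $\R^d$, a constant depending only on $d$. The change of variables $u = a r^{2/3}$ (so that $r = (u/a)^{3/2}$) transforms this into
$$\frac{3\, \omega_{d-1}}{2} \, a^{-3d/2} \int_T^\infty u^{(3d-2)/2} e^{-u} \, du, \qquad T := a \eta^{2/3},$$
since a routine computation gives $r^{d-1} \, dr = (3/2) \, a^{-3d/2} u^{(3d-2)/2} \, du$. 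This already produces the desired prefactor $a^{-3d/2}$, so all that remains is to bound the incomplete gamma integral by a constant multiple of $e^{-T/c}$ with $c = 4^{2/3}$; note that $T/c = a (\eta/4)^{2/3}$, i.e.\ $e^{-T/c} = \Psi_a(\eta/4)$.

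For the incomplete gamma bound, set $k = (3d-2)/2$ and $\alpha = 1 - 4^{-2/3}$. Since $u \geq T$ implies $e^{-u/c} \leq e^{-T/c}$, I would split $e^{-u} = e^{-u/c} \cdot e^{-\alpha u}$ and pull out the first factor to obtain
$$\int_T^\infty u^k e^{-u} \, du \leq e^{-T/c} \int_T^\infty u^k e^{-\alpha u} \, du \leq \alpha^{-(k+1)} \Gamma(k+1) \, e^{-T/c},$$
where the last inequality follows from the rescaling $v = \alpha u$ and the definition of the Gamma function. Since $\alpha$ and $k$ depend only on $d$, the prefactor $\alpha^{-(k+1)} \Gamma(k+1)$ is absorbed into $C_d$.

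Combining these two estimates yields the claim. There is no serious obstacle; the proof is an elementary calculation. The one point worth noting is the role of the factor $1/4$ in $\Psi_a(\eta/4)$: under the substitution it corresponds to the choice $c = 4^{2/3} > 2$, which guarantees $\alpha = 1 - 1/c > 0$ so that the rescaled Gamma integral converges and contributes only a $d$-dependent constant. A smaller dilation factor than $1/4$ would give $c$ closer to $1$ and force $\alpha$ close to $0$, so the constant $1/4$ is comfortably away from the threshold where this simple argument breaks down.
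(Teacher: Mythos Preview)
Your proof is correct and follows essentially the same route as the paper: pass to spherical coordinates, substitute $u = a r^{2/3}$ to extract the factor $a^{-3d/2}$, and then bound the resulting incomplete gamma integral by a dimensional constant times $e^{-a(\eta/4)^{2/3}}$. The only cosmetic difference is that the paper bounds $t^{(3d-2)/2} \le \sqrt{(3d-2)!}\, e^{t/2}$ (via $t^K \le K! e^t$) and integrates $e^{-t/2}$, whereas you split $e^{-u} = e^{-u/c} e^{-\alpha u}$ with $c = 4^{2/3}$ and invoke the full Gamma integral; both arrive at the same conclusion.
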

\begin{proof}
By the Taylor series formula for the exponential function, 
\begin{equation}\label{simple:eqn}
t^K \leq K! e^t \mbox{ for } t > 0, \; K=1,2,3,\cdots.
\end{equation}
Using polar coordinates, if $\sigma_d$ is the surface measure of the $(d-1)$-sphere, then
\[
\int_{x \in \R^d : |x| > \eta} \Psi_a(|x|) dx = \sigma_d \int_{r > \eta} e^{-a r^{2/3}} r^{d-1} dr= \frac{3 \sigma_d}{2}  a^{-\frac{3}{2}d} \int_{t > a \eta^{2/3}} e^{-t} t^{(3d-2)/2} dt,
\]
where the second equality uses the change of variables $t = a r^{2/3}$, $r = a^{-3/2} t^{3/2}$, $dr = (3/2) a^{-3/2} t^{1/2} dt$. Next, applying \eqref{simple:eqn} with $K=3d-2$:
\[
\begin{aligned}
\int_{t > a \eta^{2/3}} e^{-t} t^{(3d-2)/2} dt &\leq \sqrt{(3d-2)!} \int_{t > a \eta^{2/3}} e^{-t/2} dt \\
& = 2 \sqrt{(3d-2)!} e^{-a \eta^{2/3}/2} \leq 2 \sqrt{(3d-2)!} e^{-a (\eta/4)^{2/3}}.
\end{aligned}
\]
Combining the previous lines gives
\[
\int_{x \in \R^d : |x| > \eta} \Psi_a(|x|) dx \leq 3 \sigma_d \sqrt{(3d-2)!} a^{-(3/2)d}  e^{-a (\eta/4)^{2/3}} = C_d a^{-(3/2)d} \Psi_a(\eta/4).
\]
\end{proof}

Next we estimate the sum of $\Psi_a( \pi | x|)$, when $x$ ranges in a subset of a lattice in $\R^d$.

\begin{lemma}\label{lem:lat_sum_Phi}
Let $\mathcal{L}$ be a lattice in $\R^d$ of the form $\mathcal{L} = x_0 + \delta \Z^d = \{ x_0 + \delta \bk : \bk \in \Z^d \}$, with $x_0 \in [0,1)^d$ and $\delta \in (0,1)$. Let $X \subset \mathcal{L}$ be given, and set $D_\delta(X) := \bigcup_{x \in X} ( x + [-\delta/2,\delta/2]^d)$. Then for any $a \in (0,1)$,
\[
\sum_{x \in X} \Psi_a(\pi |x|) \leq C_d \delta^{-d} \int_{D_\delta(X)} \Psi_a( \pi |z|) dz.
\]
\end{lemma}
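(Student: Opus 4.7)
The plan is to transfer the sum into a Riemann-sum-like comparison with the integral by showing that on each lattice cube, the integrand $\Psi_a(\pi|\cdot|)$ is uniformly comparable to its value at the lattice center.

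First I would decompose the domain. Since $\mathcal{L}$ is a lattice with spacing $\delta$, the cubes $Q_x := x + [-\delta/2,\delta/2]^d$ for $x \in X$ have pairwise disjoint interiors, and by definition $D_\delta(X) = \bigcup_{x \in X} Q_x$. Hence
\[
\int_{D_\delta(X)} \Psi_a(\pi|z|)\, dz = \sum_{x \in X} \int_{Q_x} \Psi_a(\pi|z|)\, dz.
\]
So it suffices to prove the pointwise bound $\Psi_a(\pi|x|) \leq C_d \delta^{-d} \int_{Q_x} \Psi_a(\pi|z|)\, dz$ for each $x \in X$, i.e.\ that the average of $\Psi_a(\pi|\cdot|)$ on $Q_x$ is at least a dimensional constant multiple of $\Psi_a(\pi|x|)$.

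Next, for $z \in Q_x$ we have $|z-x| \leq \sqrt{d}\,\delta/2$, so by the triangle inequality $|z| \leq |x| + \sqrt{d}\,\delta/2$. Because $0 < 2/3 < 1$, the function $t \mapsto t^{2/3}$ is subadditive on $[0,\infty)$, giving
\[
|z|^{2/3} \leq |x|^{2/3} + (\sqrt{d}\,\delta/2)^{2/3} \leq |x|^{2/3} + (\sqrt{d}/2)^{2/3},
\]
where the last inequality uses $\delta \in (0,1)$. Multiplying by $-a\pi^{2/3}$ (with $a \in (0,1)$) and exponentiating yields
\[
\Psi_a(\pi|z|) = e^{-a\pi^{2/3}|z|^{2/3}} \geq e^{-\pi^{2/3}(\sqrt{d}/2)^{2/3}} \, \Psi_a(\pi|x|) =: c_d \, \Psi_a(\pi|x|),
\]
with $c_d > 0$ depending only on $d$.

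Finally I would integrate this pointwise inequality over $Q_x$, which has volume $\delta^d$, to obtain $\int_{Q_x} \Psi_a(\pi|z|)\, dz \geq c_d \delta^d \Psi_a(\pi|x|)$, and then sum over $x \in X$ using the disjointness established in the first step, producing the claimed inequality with $C_d = c_d^{-1}$. There is no real obstacle in this argument; the only point that requires a brief justification is the subadditivity of $t^{2/3}$, which is what allows the additive error in $|z| \leq |x| + O(\delta)$ to be absorbed into a harmless multiplicative dimensional constant rather than an $x$-dependent one.
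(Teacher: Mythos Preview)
Your argument is correct and follows essentially the same route as the paper: both proofs reduce to showing $\Psi_a(\pi|x|) \leq C_d \Psi_a(\pi|z|)$ for $z$ in the cube $Q_x$, then integrate over $Q_x$ and sum using the disjointness of the cubes. The paper phrases the key pointwise comparison as H\"older-$2/3$ continuity of $z \mapsto a(\pi|z|)^{2/3}$, while you phrase it via subadditivity of $t \mapsto t^{2/3}$; these are equivalent observations, and your version has the minor advantage of making the dimensional constant explicit.
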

\begin{proof}
The function $\varphi_a : \R^d \rightarrow \R$, $\varphi_a(z) =  a (\pi | z|)^{2/3}$ is uniformly continuous (in fact, it is H\"{o}lder-$2/3$ continuous), and so $|\varphi_a(z) - \varphi_a(x)| \leq c_d$ whenever $|x-z|_\infty \leq 1$, for some constant $c_d > 0$. Thus,
$\Psi_a(\pi |x|) \leq C_d \Psi_a(\pi |z|)$ for $|x-z|_\infty
\leq 1$, for a constant $C_d > 0$. We integrate this inequality over $z$ in the cube $ x + [-\delta/2,\delta/2]^d$ of Lebesge measure $\delta^d$. We then sum the resulting inequality over $x \in X$, to get
\[
\sum_{x \in X} \Psi_a (\pi |x|) \cdot \delta^d \leq C_d \int_{D_\delta(X)} \Psi_a( \pi |z|) dz.
\]
\end{proof}

\section{Orthonormal wave packet basis for $L^2(Q)$}\label{wave-packet}

Let $\cW$ be the partition of the interval $(0,1)$ defined  in Section \ref{sec:lsb}, and let $Q = [0,1]^d$ be the unit cube in $\R^d$.
We denote by $\cW^{\otimes d}$ the set of all $d$-{\it fold Cartesian products} of intervals in $\cW$, so, $\cW^{\otimes d} = \{ \bL = L_1 \times \cdots \times L_d : L_1,\cdots,L_d \in \cW\}$. We regard the elements $\bL$ of $\cW^{\otimes d}$ as  boxes contained in $Q$.

%We write $\cB(Q)$ to denote the basis on $L^2(Q)$, $Q = [0,1]^d$, defined as follows. 

Define an indexing set $\cI = \cW^{\otimes d} \times \N^d$. Indices in $ \cI$ will be written $\nu = (\bL,\bk)$, with $\bL = L_1 \times \cdots \times L_d \in \cW^{\otimes d}$, $\bk = (k_1,\cdots,k_d) \in \N^d$. 
We define  ``wave packets'' $\psi_{(\bL,\bk)}$ in $L^2(\R^d)$ by
\begin{align}\label{tensor-basis}
\psi_{(\bL,\bk)}(x_1,\cdots,x_d) = \prod_{r=1}^d \phi_{L_r, k_r}(x_r),
\end{align} 
where $\cB(I) = \{\phi_{L, k}\}_{L \in \cW, k \in \N}$ is the local sine orthonormal basis for $L^2(I)$ (see Proposition \ref{basis:prop}).

Set $\cB(Q) = \{ \psi_\nu \}_{\nu \in \cI}$. Then $\cB(Q) = \cB(I)^{\otimes d}$ is the $d$-fold tensor product of the basis $\cB(I)$ for $L^2(I)$.

Identify the $d$-fold tensor power of $L^2(I)$ with $L^2(Q)$. By elementary properties of tensor products of Hilbert spaces, $\cB(Q)$ is an orthonormal basis for $L^2(Q)$. We refer to $\cB(Q) = \{\psi_{(\bL,\bk)}\}_{(\bL,\bk) \in \cI}$ as an ``orthonormal wave packet basis'' for $L^2(Q)$.

\subsection{Fourier decay of wave packets}

Here, we prove a decay bound on the Fourier transform $\widehat{\psi_{(\bL,\bk)}}(\xi)$ 
of a wave packet in $\cB(Q)$. The estimates will follow immediately from the decay estimate \eqref{Fourier_decay_CM:eqn} for the local sine basis functions.

We write $\mathbf{1}$ for the all-ones vector $ (1,\cdots,1)$ in $\R^d$. Given $x  \in \R^d$, $\sigma  \in \{ \pm 1\}^d$, we denote $\sigma \circ x := ( \sigma_1 x_1, \cdots, \sigma_d x_d)$ in $\R^d$. 

\begin{definition}
Given a box $\bL = L_1 \times \cdots \times L_d$ in $\R^d$, define the  map $\tau_{\bL} : \R^d \rightarrow \R^d$ by 
\begin{equation}\label{tauL:def}
\tau_{\bL}(\xi_1, \cdots, \xi_d) := \pi^{-1} ( \delta_{L_1} \xi_1, \cdots,  \delta_{L_d} \xi_d).
\end{equation}
Given $\sigma = (\sigma_1,\cdots,\sigma_d) \in \{\pm 1\}^d$, define the coordinate-wise reflection $\tau_\sigma : \R^d \rightarrow \R^d$ by 
\begin{equation}\label{tausigma:def}
\tau_\sigma(x_1,\cdots,x_d) := \sigma \circ x = (\sigma_1 x_1,\cdots, \sigma_d x_d).
\end{equation}
\end{definition}

Note that $\tau_{\bL}$ is a non-isotropic rescaling map, and $\tau_{\bL}^{-1}$ sends the box $\bL$ to a cube of measure $\pi^d$.

\begin{lemma} \label{lem:wavepacketdecay}

For all  $\nu=(\bL,\bk) \in \cW^{\otimes d} \times \N^d$ and $\xi \in \R^d$, 
\begin{equation} \label{Fourier_decay_CM3:eqn}|\widehat{\psi_{(\bL,\bk)}}(\xi)|^2 \leq 2^d A^{2d} \mu_d(\bL) \sum_{\sigma \in \{\pm 1\}^d} \Psi_{2a}( \pi \cdot |  \tau_{\bL}(\xi) -   \sigma \circ (\bk+ (1/2)\mathbf{1})|).
\end{equation}

\end{lemma}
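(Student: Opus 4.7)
The plan is to exploit the tensor product structure of $\psi_{(\bL,\bk)}$ and reduce to the one-dimensional Fourier decay estimate \eqref{Fourier_decay_CM:eqn} from Proposition \ref{basis:prop}.

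First, since $\psi_{(\bL,\bk)}(x) = \prod_{r=1}^d \phi_{L_r,k_r}(x_r)$ is a tensor product, the Fourier transform factors as
\[
\widehat{\psi_{(\bL,\bk)}}(\xi) = \prod_{r=1}^d \widehat{\phi_{L_r,k_r}}(\xi_r),
\]
so that
\[
|\widehat{\psi_{(\bL,\bk)}}(\xi)|^2 = \prod_{r=1}^d |\widehat{\phi_{L_r,k_r}}(\xi_r)|^2.
\]
I would then apply Proposition \ref{basis:prop} coordinate-by-coordinate, followed by the elementary bound $(u+v)^2 \leq 2(u^2 + v^2)$ and the observation $\Psi_a(t)^2 = \Psi_{2a}(t)$, to obtain, for each $r$,
\[
|\widehat{\phi_{L_r,k_r}}(\xi_r)|^2 \leq 2 A^2 \delta_{L_r} \sum_{\sigma_r \in \{\pm 1\}} \Psi_{2a}\bigl(\delta_{L_r}(\xi_r - \sigma_r \pi(k_r + 1/2)/\delta_{L_r})\bigr).
\]

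Next, I would multiply these $d$ one-dimensional bounds and expand the resulting product of sums as a single sum over $\sigma = (\sigma_1,\cdots,\sigma_d) \in \{\pm 1\}^d$. Using the definition \eqref{tauL:def} of $\tau_{\bL}$, one checks that
\[
\bigl(\pi \cdot (\tau_{\bL}(\xi) - \sigma \circ (\bk + (1/2)\mathbf{1}))\bigr)_r = \delta_{L_r}\bigl(\xi_r - \sigma_r \pi(k_r+1/2)/\delta_{L_r}\bigr),
\]
so writing $t_r$ for this $r$-th coordinate and $t = \pi \cdot (\tau_{\bL}(\xi) - \sigma \circ (\bk + (1/2)\mathbf{1})) \in \R^d$, the bound takes the form
\[
|\widehat{\psi_{(\bL,\bk)}}(\xi)|^2 \leq 2^d A^{2d} \mu_d(\bL) \sum_{\sigma \in \{\pm 1\}^d} \prod_{r=1}^d \Psi_{2a}(t_r),
\]
where I used $\prod_r \delta_{L_r} = \mu_d(\bL)$.

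The only non-routine step is converting the product $\prod_r \Psi_{2a}(t_r) = \exp\bigl(-2a \sum_{r} |t_r|^{2/3}\bigr)$ into a single $\Psi_{2a}(|t|)$, which amounts to showing
\[
|t|^{2/3} = \Bigl(\sum_{r=1}^d t_r^2\Bigr)^{1/3} \leq \sum_{r=1}^d |t_r|^{2/3}.
\]
Setting $s_r = |t_r|^{2/3} \geq 0$ reduces this to $(\sum s_r^3)^{1/3} \leq \sum s_r$, which is the $\ell^3 \leq \ell^1$ inequality on $\R^d$; it follows from the pointwise bound $s_r^3 \leq s_r (\sum_j s_j)^2$ summed over $r$. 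Once this inequality is established, $\prod_r \Psi_{2a}(t_r) \leq \Psi_{2a}(|t|)$, yielding \eqref{Fourier_decay_CM3:eqn}. I expect the whole argument to be short and the $\ell^p$-norm comparison to be the only substantive step.
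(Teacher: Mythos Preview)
Your proposal is correct and follows essentially the same approach as the paper: factor the Fourier transform via the tensor structure, square the one-dimensional bound \eqref{Fourier_decay_CM:eqn} using $(u+v)^2\le 2(u^2+v^2)$ and $\Psi_a^2=\Psi_{2a}$, multiply and expand over $\sigma\in\{\pm1\}^d$, and then collapse $\prod_r \Psi_{2a}(t_r)$ to $\Psi_{2a}(|t|)$ via the $\ell^3\le\ell^1$ inequality. The paper's proof is identical in structure and identifies the same norm comparison as the key step.
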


\begin{proof}
Squaring \eqref{Fourier_decay_CM:eqn}, using  $(s+t)^2 \leq 2 (s^2 + t^2)$, and $\Psi_a(t)^2 = \Psi_{2a}(t)$ (see \eqref{def:Psi}), 
\begin{equation} \label{Fourier_decay_CM2:eqn}|\widehat{\phi_{L,k}}(\xi)|^2 \leq 2A^2 \delta_L \sum_{\sigma \in \{\pm 1\}} \Psi_{2a}( \xi \delta_L - \sigma \pi (k+1/2)).
\end{equation}
Note that
$\widehat{\psi_{(\bL,\bk)}}(\xi) = \prod_{r=1}^d \widehat{\phi_{L_r,k_r}}(\xi_r)$ for $\xi = (\xi_1,\cdots,\xi_d)$,
where $\widehat{\cdot}$ on the LHS is the   Fourier transform on $\R^d$,
and $\widehat{\cdot}$ on the RHS is the   Fourier transform on $\R$. Thus, from \eqref{Fourier_decay_CM2:eqn},
\begin{equation}\label{eqn:tensorbd1}
|\widehat{\psi_{(\bL,\bk)}}(\xi)|^2 \leq 2^d A^{2d} \left( \prod_{j=1}^d \delta_{L_j} \right) \sum_{\sigma = (\sigma_1,\cdots, \sigma_d) \in \{\pm 1\}^d} \prod_{j=1}^d \Psi_{2a}(   \xi_j \delta_{L_j} - \sigma_j \pi ( k_j+1/2)).
\end{equation}
By definition of $\Psi_a$ (see \eqref{def:Psi}),
\begin{equation}\label{help:eqn}
\prod_{j=1}^d \Psi_{2a}(  \omega_j ) =  \exp \left( - 2a ( \sum_{j=1}^d |\omega_j|^{2/3})\right) \leq \exp(- 2 a |\omega|^{2/3}) = \Psi_{2a}(|\omega|)
\end{equation}
for $\omega = (\omega_1,\cdots, \omega_d)$, where we use the inequality $(|t_1|^3 + \cdots + |t_d|^3)^{1/3} \leq |t_1| + \cdots + |t_d|$ (with $t_j = |\omega_j|^{2/3}$).  Note also if $\bL = L_1 \times \cdots \times L_d$ is a box in $\R^d$, then $\prod_j \delta_{L_j} = \mu_d(\bL)$. 

Returning to \eqref{eqn:tensorbd1}, we make the substitution $\mu_d(\bL) = \prod_j \delta_{L_j}$, and then apply the inequality \eqref{help:eqn} for the vector $\omega = \pi \tau_\bL(\xi) - \pi \sigma \circ (\bk + (1/2) \mathbf{1})$, which is given in coordinates by $\omega =  (\omega_1,\cdots,\omega_d)$, $\omega_j = \xi_j\delta_{L_j} - \sigma_j \pi(k_j+1/2)$. This shows
\[
|\widehat{\psi_{(\bL,\bk)}}(\xi)|^2 \leq 2^d A^{2d} \mu_d(\bL) \sum_{\sigma  \in \{\pm 1\}^d}  \Psi_{2a}(  |\pi \tau_\bL(\xi) - \pi \sigma \circ (\bk + (1/2) \mathbf{1})|),
\]
completing the proof of \eqref{Fourier_decay_CM3:eqn}.

\end{proof}

Inequality \eqref{Fourier_decay_CM:eqn} can be interpreted to say that $\widehat{\phi_{L,k}}(\xi)$ is concentrated near two points of $\R$, where $\xi \sim \pm \xi_{L,k}$, with $\xi_{L,k} := \pi (k+1/2)/\delta_L$. By \eqref{tensor-basis}, we have $\widehat{\psi_{(\bL, \bk)}}(\xi) = \prod_{j=1}^d \widehat{\psi}_{L_j,k_j}(\xi_j)$. Therefore, $\widehat{\psi_{(\bL,\bk)}}(\xi)$ is concentrated for $\xi \in \R^d$ close to the $2^d$ many points $(\pm \xi_{L_1,k_1}, \cdots, \pm \xi_{L_d,k_d})$ in $\R^d$.  We now define frequency regions $R_\eta(\bL,\bk)$ (parameterized by $\eta > 1$) where $\widehat{\psi_{(\bL,\bk)}}(\xi)$ is concentrated.

Given a map $\tau: \R^d \rightarrow \R^d$ and a set $S \subset \R^d$, we let $\tau[S] = \{ \tau(x) : x \in S\}$. 

Given $(\bL,\bk) \in \cI = \cW^{\otimes d} \times \N^d$ and $\eta \geq 1$, the set $R_\eta(\bL,\bk) \subset \R^d$ is defined by
\begin{equation}
\label{R_eta_decomp:eqn}
\left\{
\begin{aligned}
&R_\eta(\bL,\bk) = \bigcup_{\sigma \in \{\pm 1\}^d} \tau_\sigma[ R^1_\eta(\bL,\bk)], \mbox{where } \\
&R_\eta^1(\bL,\bk) = \prod_{j=1}^d [\xi_{L_j,k_j} - \eta \delta_{L_j}^{-1}, \xi_{L_j,k_j} + \eta \delta_{L_j}^{-1}], \mbox{ and} \\
&\xi_{L,k} = \pi (k+1/2)/\delta_L.
\end{aligned}
\right.
\end{equation}
We define $\xi_{\bL,\bk} = (\xi_{L_1,k_1},\cdots,\xi_{L_d,k_d})$ in $\R^d$. Observe that $R_\eta(\bL,\bk)$ is the union of $2^d$ many boxes centered at the points $\tau_\sigma(\xi_{\bL,\bk})$ in $\R^d$ ($\sigma \in \{\pm 1\}^d)$, and  $R_\eta(\bL,\bk)$ is coordinate-wise symmetric. The regions are nested with respect to $\eta$, i.e., $R_\eta(\bL,\bk) \subset R_{\eta'}(\bL,\bk)$ for $\eta \leq \eta'$.

\subsection{Main technical result}
\label{mtr:sec}

The following result realizes the assumptions of Lemma \ref{func_anal:lem} for the operator $\cT_r = P_Q B_{S(r)} P_Q$ and the basis $\mathcal{B}(Q) = \{\psi_\nu\}_{\nu \in \cI}$ for $L^2(Q)$.

\begin{proposition}\label{main:prop}
Let $Q = [0,1]^d$.  Let $S \subset \R^d$ be convex and coordinate-wise symmetric (see \eqref{eqn:coord_sym}), with $S \subset B(0,1)$. Let $\delta \in (0,1/2)$ and $r \geq 1$ be given, and define $S(r) := r S = \{rx : x \in S\}$.   Let $\cT_r := \cT_{Q,S(r)} = P_Q B_{S(r)} P_Q$. Let $\{\psi_\nu\}_{\nu\in \cI}$ be the orthonormal wave packet basis for $L^2(Q)$, defined in  \eqref{tensor-basis}.  Then there exists a partition of $\cI = \cI_{low} \cup \cI_{res} \cup \cI_{hi}$, determined by $S, r, \delta$, satisfying   
\[
\sum_{\nu \in \cI_{low}} \| (I-\cT_r) \psi_\nu \|_{2}^2 + \sum_{\nu \in \cI_{hi}} \| \cT_r \psi_\nu \|_{2}^2 
\leq \overline{C}_d \delta r^d
\]
with 
\[
\begin{aligned}
&\# \cI_{res} \leq C_d  \max \{ r^{d-1} \log(r/\delta)^{5/2}, (\log(r/\delta))^{(5/2)d} \}, \mbox{ and} \\
&|\# \cI_{low} - (2 \pi)^{-d} \mu_d(S(r)) | \leq C_d  \max \{ r^{d-1} \log(r/\delta)^{5/2}, (\log(r/\delta))^{(5/2)d} \}. 
\end{aligned}
\]

Here, $C_d$ and $\overline{C}_d \geq 1$ are dimensional constants, determined solely by $d$ (in particular, these constants are independent of the choice of the convex body $S \subset B(0,1)$).
\end{proposition}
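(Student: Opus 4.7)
The plan is to apply Lemma \ref{func_anal:lem} by constructing a partition of $\cI$ based on how the frequency region $R_\eta(\bL,\bk)$ from \eqref{R_eta_decomp:eqn} sits relative to $S(r)$, together with a cutoff on the box volume $\mu_d(\bL)$. Fix $\eta = C_d \log(r/\delta)^{3/2}$ large enough that $\Psi_{2a}(\eta/8) \lesssim \delta r^{-d}$, and choose a volume threshold $\tau_0 > 0$ so that $\sum_{\mu_d(\bL) < \tau_0} \mu_d(\bL) \lesssim \min\{\delta,\, r^{-1}\log(r/\delta)^{5/2}\}$. This is possible because $\sum_{L \in \cW}\delta_L = 1$ and the dyadic structure gives $\sum_{L:\, \delta_L \leq t}\delta_L \lesssim t$, whence $\sum_{\mu_d(\bL)<\tau_0}\mu_d(\bL) \lesssim \log(1/\tau_0)^{d-1}\tau_0$. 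Call $\bL$ \emph{large} if $\mu_d(\bL) \geq \tau_0$ and set
\[
\cI_{low} = \{(\bL,\bk) : \bL \text{ large},\ R_\eta(\bL,\bk) \subset S(r)\}, \quad \cI_{res} = \{(\bL,\bk) : \bL \text{ large},\ R_\eta(\bL,\bk) \cap S(r) \neq \emptyset,\ R_\eta(\bL,\bk) \not\subset S(r)\},
\]
with everything else (including all packets over small $\bL$) placed in $\cI_{hi}$.

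Since $\psi_\nu = P_Q \psi_\nu$, one has $(I-\cT_r)\psi_\nu = P_Q(I - B_{S(r)})\psi_\nu$ and $\cT_r \psi_\nu = P_Q B_{S(r)}\psi_\nu$, so by Plancherel each tail term is controlled by an integral of $|\widehat{\psi_\nu}|^2$ over $\R^d \setminus S(r)$ or $S(r)$. For $\nu \in \cI_{low}$, Lemma \ref{lem:wavepacketdecay} with the change of variables $u = \tau_\bL(\xi) - \sigma\circ(\bk + \mathbf{1}/2)$ reduces the integral to one of $\Psi_{2a}(\pi|\cdot|)$ over $\{|u|_\infty > \eta/\pi\}$; the $\mu_d(\bL)$ prefactors cancel and \eqref{Psi_bd} yields the uniform per-packet bound $\lesssim \Psi_{2a}(\eta/4) \lesssim \delta r^{-d}$, so $\sum_{\cI_{low}} \lesssim \delta$. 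For $\nu \in \cI_{hi}$ with $\bL$ large (so $R_\eta \cap S(r) = \emptyset$), swap sum and integral: for each fixed $\bL$ and $\xi \in S(r)$, the constraint forces the lattice points $y = \sigma\circ(\bk + \mathbf{1}/2) \in (\Z+\tfrac{1}{2})^d$ to satisfy $|y - \tau_\bL(\xi)|_\infty > \eta/\pi$, so Lemma \ref{lem:lat_sum_Phi} together with \eqref{Psi_bd} bound the inner sum by $\lesssim \Psi_{2a}(\eta/8)$; integrating over $S(r)$ and summing over $\bL$ via $\sum_\bL \mu_d(\bL) = 1$ gives $\lesssim r^d \Psi_{2a}(\eta/8) \lesssim \delta r^d$. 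For $\nu \in \cI_{hi}$ over small $\bL$, the trace identity $\sum_{\bk \in \N^d}\|\cT_r \psi_{\bL,\bk}\|^2 \leq (2\pi)^{-d}\mu_d(\bL)\mu_d(S(r))$ summed over small $\bL$ gives $\lesssim r^d \cdot \min\{\delta, r^{-1}\log(r/\delta)^{5/2}\} \lesssim \delta r^d$ by the choice of $\tau_0$.

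For the counting, fix a large $\bL$ and rescale by $\tau_\bL$: the lattice $\cL_\bL^{full}$ becomes $\mathbf{1}/2+\Z^d$, and $S(r)$ becomes a convex, coordinate-wise symmetric body $\tilde S_\bL \subset E(r\delta_{L_1}/\pi,\ldots,r\delta_{L_d}/\pi)$. The coordinate-wise symmetry of $S(r)$ reduces the $\sigma$-fold ambiguity by exactly $2^d$, so $\#\cI_{low}(\bL) = 2^{-d} \#\{y \in \mathbf{1}/2 + \Z^d : \dist_\infty(y, \R^d \setminus \tilde S_\bL) \geq \eta/\pi\}$ and $\#\cI_{res}(\bL) \leq 2^{-d+1}\#\{y : \dist_\infty(y, \partial \tilde S_\bL) < \eta/\pi\}$; Corollary \ref{cor:lat} with $r_i = r\delta_{L_i}/\pi$ yields $|\#\cI_{low}(\bL) - (2\pi)^{-d}\mu_d(\bL)\mu_d(S(r))| \lesssim e_0(\eta/\pi;\bL)$ and $\#\cI_{res}(\bL) \lesssim e_0(\eta/\pi;\bL)$, with $e_0(\eta/\pi;\bL) = O\bigl(\sum_{j=0}^{d-1} r^j \eta^{d-j} \max_{|I|=j}\prod_{i \in I}\delta_{L_i}\bigr)$. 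The main obstacle is summing this over the large boxes, since $\cW$ is infinite: bound $\max$ by $\sum$ and separate coordinates, then use $\sum_{L\in\cW}\delta_L = 1$ together with $\#\{L \in \cW : \delta_L \geq \tau_0^{1/d}\} = O(\log(r/\delta))$ to obtain $\sum_{\bL \text{ large}}\prod_{i\in I}\delta_{L_i} \lesssim (\log(r/\delta))^{d-|I|}$. With $\eta \sim \log(r/\delta)^{3/2}$, the $j = d-1$ term contributes $r^{d-1}\log(r/\delta)^{5/2}$ and the $j=0$ term contributes $\log(r/\delta)^{5d/2}$; the cutoff discrepancy $(2\pi)^{-d}\mu_d(S(r))\sum_{\text{small}}\mu_d(\bL) \lesssim r^{d-1}\log(r/\delta)^{5/2}$ by the choice of $\tau_0$, all within the claimed error.
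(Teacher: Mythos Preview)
Your proposal follows the same architecture as the paper: a partition of $\cI$ according to whether $R_\eta(\bL,\bk)$ lies inside, outside, or straddles $S(r)$, together with a box-size cutoff; Fourier-tail energy estimates using Lemma~\ref{lem:wavepacketdecay}, Lemma~\ref{lem:lat_sum_Phi} and \eqref{Psi_bd}; and counting via Corollary~\ref{cor:lat} after rescaling by $\tau_{\bL}$. The paper differs only in cosmetic choices: it cuts on the \emph{minimum sidelength} $\min_j\delta_{L_j}>\delta/r$ rather than on the volume $\mu_d(\bL)$, takes $\eta\sim\log(1/\delta)^{3/2}$ (then bounds it by $\log(r/\delta)^{3/2}$ in the counting), handles $\cE_{low}$ by summing over $\bk$ first rather than per-packet, and treats the small-box part of $\cE_{hi}$ by the one-dimensional preparatory bound $\sum_k\|\widehat{\phi_{L,k}}\|_{L^2([-r,r])}^2\lesssim\delta_L r$ instead of your trace inequality (your trace bound is correct up to a factor $2^d$, since the bell functions are supported on intervals of length $2\delta_L$).

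One small slip to fix: in your counting step you write ``separate coordinates \dots use $\#\{L:\delta_L\ge\tau_0^{1/d}\}$'', but $\mu_d(\bL)\ge\tau_0$ does \emph{not} force each $\delta_{L_j}\ge\tau_0^{1/d}$; the implication goes the other way. What is true (and suffices) is that, since every $\delta_L\le 1/4$, the volume constraint $\prod_j\delta_{L_j}\ge\tau_0$ forces each $\delta_{L_j}\ge 4^{d-1}\tau_0$; enlarging to the product set $\{\bL:\min_j\delta_{L_j}\ge 4^{d-1}\tau_0\}$ then lets you separate coordinates and obtain $\sum_{\bL\text{ large}}\prod_{i\in I}\delta_{L_i}\lesssim(\log(1/\tau_0))^{d-|I|}$. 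Choosing, e.g., $\tau_0\sim(\delta/r)^d$ gives $\log(1/\tau_0)\sim\log(r/\delta)$ and simultaneously makes $\sum_{\text{small}}\mu_d(\bL)\lesssim(\log(r/\delta))^{d-1}(\delta/r)^d$ small enough for both the energy bound and the cutoff discrepancy. This is exactly the role the paper's min-sidelength threshold $\delta/r$ plays, so after this correction your argument and the paper's coincide.
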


The rest of this section is devoted to the proof of the Proposition \ref{main:prop}. 

 \subsubsection{Partitioning the basis}
Here, we start on the proof of Proposition \ref{main:prop}. Given $\delta\in (0,1/2)$, let
\begin{equation}\label{eta_choice:eqn}
\eta := 125 ( a^{-1} \ln(1/\delta))^{3/2},
\end{equation}
with $a=1/115$  as in Proposition \ref{basis:prop}. For this $\eta$,
\begin{equation}\label{eta:eqn}
\Psi_{2a}(\eta/8) \leq \delta.
\end{equation}
Indeed, by \eqref{def:Psi}, $\Psi_{2a}(\eta/8) = \exp( - a \eta^{2/3}/2) = \exp( - 25 \ln(1/\delta)/2) \leq  \delta$.

Associated to the rescaled bandlimiting domain $S(r) = rS$, the value of $\delta$, and $\eta$ (determined by $\delta$), we define a partition $ \{ \cI_{low}, \cI_{res}$, $\cI_{hi} \}$  of $\cI$:
\begin{equation}\label{index_sets:eqn}
\begin{aligned}
&\cI_{low} := \biggl\{ \nu = (\bL,\bk) : \min_{j} \delta_{L_j} > \delta/r, \; R_\eta(\bL,\bk) \subset  S(r) \biggr\}\\
&\cI_{hi} := \biggl\{ \nu = (\bL,\bk) : R_\eta(\bL,\bk) \subset \R^d \setminus S(r) \biggr\} \\
& \qquad\quad \bigcup \biggl\{ \nu = (\bL,\bk) : \min_j \delta_{L_j} \leq \delta/r  \biggr\} \\
&\cI_{res} := \cI \setminus (\cI_{low} \cup \cI_{hi}).
\end{aligned}
\end{equation}

\subsubsection{Energy estimates}

We define the ``energy loss'' for the operator $\cT_r = P_Q B_{S(r)} P_Q$ with respect to the partition \eqref{index_sets:eqn} of the basis $\cB(Q) = \{\psi_\nu\}_{\nu \in \cI}$ by
\[
\mathcal{E} := \sum_{\nu \in \cI_{hi} } \| \cT_r \psi_\nu \|^2 + \sum_{\nu \in \cI_{low}} \| (I-\cT_r) \psi_\nu \|^2.
\]
By Plancherel's theorem, the fact $\psi_\nu$ are supported inside of $Q$, and the fact that $P_Q$ is a contraction on $L^2(\R^d)$, 
\[
\mathcal{E} \leq \sum_{\nu \in \cI_{hi}} \| \widehat{\psi_\nu} \|_{L^2(S(r))}^2 +  \sum_{\nu \in \cI_{low}} \| \widehat{\psi_\nu} \|_{L^2(\R^d \setminus S(r))}^2 =: \cE_{hi} + \cE_{low}.
\]
Here, and in what follows, we write $\| g\|_{L^2(A)} =  (\int_A |g(x)|^2 dx)^{1/2}$ for the $L^2$ norm of $g$ on a measurable set $A \subset \R^d$. 

We now work to bound the two terms on the right-hand side of the above inequality. We write $X \lesssim Y$ to denote  $X \leq C_d Y$ for a dimensional constant $C_d$. 

\begin{lemma}\label{Ehi:lem}
$\cE_{hi} \lesssim \delta r^d$.
\end{lemma}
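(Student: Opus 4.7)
The plan is to split $\cI_{hi} = \cI_{hi}^{(1)} \cup \cI_{hi}^{(2)}$ according to the two defining clauses in \eqref{index_sets:eqn}: let $\cI_{hi}^{(1)} = \{\nu : R_\eta(\bL,\bk)\cap S(r)=\emptyset\}$ and $\cI_{hi}^{(2)} = \{\nu : \min_j \delta_{L_j}\leq \delta/r\}$, and denote the corresponding partial sums by $\cE_{hi}^{(i)}$. I shall bound each by $O_d(\delta r^d)$; the two pieces require quite different methods.

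For the thin-box piece $\cE_{hi}^{(2)}$, the plan is a Bessel-inequality argument. For each fixed $\bL$, the tensor family $\{\psi_{(\bL,\bk)}\}_{\bk \in \N^d}$ is orthonormal in $L^2(\R^d)$ and supported in a common rectangle $\bL^* \subset Q$ with $\mu_d(\bL^*) = 2^d \mu_d(\bL)$ (here I use $\epsilon_j + \epsilon_{j+1} = \delta_j$ from \eqref{eqn:eps_prop}, so each bell factor lives on an interval of length $2\delta_L$). Writing $\widehat{\psi_{(\bL,\bk)}}(\xi) = \langle \psi_{(\bL,\bk)}, \chi_{\bL^*} e^{ix\xi}\rangle_{L^2(\R^d)}$ and applying Bessel pointwise in $\xi$ produces $\sum_\bk |\widehat{\psi_{(\bL,\bk)}}(\xi)|^2 \leq 2^d \mu_d(\bL)$. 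Integrating over $S(r)$ and summing over the thin boxes -- for which the explicit Whitney lengths in \eqref{W:defn} yield $\sum_{\bL : \min_j \delta_{L_j} \leq \delta/r} \mu_d(\bL) \leq 4d \delta/r$ -- gives $\cE_{hi}^{(2)} \lesssim \delta r^{d-1} \leq \delta r^d$.

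For the far-frequency piece $\cE_{hi}^{(1)}$, I would invoke Lemma \ref{lem:wavepacketdecay}. For $\xi \in S(r)$ and $(\bL,\bk) \in \cI_{hi}^{(1)}$, the condition $\xi \notin R_\eta(\bL,\bk)$, combined with the fact that $\tau_\bL$ and $\tau_\sigma$ are both diagonal maps and hence commute, implies $\pi|\tau_\bL(\xi) - \sigma \circ (\bk + \mathbf{1}/2)| > \eta$ for every $\sigma \in \{\pm 1\}^d$. The calibration \eqref{eta_choice:eqn} is designed so that $\Psi_{2a}(\eta/8) \leq \delta$, which implies $a\eta^{2/3} \geq 2\ln(1/\delta)$ and hence $\Psi_a(\eta) \leq \delta^2$. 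Since $\Psi_{2a}(t) = \Psi_a(t)^2 \leq \Psi_a(\eta)\Psi_a(t) \leq \delta^2 \Psi_a(t)$ for $t \geq \eta$, Lemma \ref{lem:wavepacketdecay} yields
\[
|\widehat{\psi_{(\bL,\bk)}}(\xi)|^2 \leq 2^d A^{2d} \delta^2 \mu_d(\bL) \sum_{\sigma \in \{\pm 1\}^d} \Psi_a(\pi|\tau_\bL(\xi) - \sigma \circ (\bk + \mathbf{1}/2)|).
\]
Summing over $\bk \in \N^d$ and $\sigma \in \{\pm 1\}^d$ is exactly a sum over the half-integer lattice $(1/2 + \Z)^d$; changing variables $\omega = \tau_\bL(\xi)$ (Jacobian $\mu_d(\bL)/\pi^d$, with $\mu_d(\tau_\bL[S(r)]) = \mu_d(\bL)\mu_d(S(r))/\pi^d$) causes the $\mu_d(\bL)$ prefactors to cancel. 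A uniform lattice bound $\sum_{\bm \in (1/2+\Z)^d} \Psi_a(\pi|\omega - \bm|) \lesssim_d 1$ -- an immediate corollary of Lemma \ref{lem:lat_sum_Phi} -- together with $\sum_\bL \mu_d(\bL) = \mu_d(Q) = 1$ then yields $\cE_{hi}^{(1)} \lesssim \delta^2 \mu_d(S(r)) \lesssim \delta^2 r^d \leq \delta r^d$. Adding the two bounds completes the proof.

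The delicate point is the bookkeeping in case (1): it is essential to extract $\delta^2$, not merely $\delta$, from the Fourier-decay estimate, in order to offset the factor $\mu_d(S(r)) \lesssim r^d$ left over after the change of variables and lattice summation. This is precisely why the threshold in \eqref{eta_choice:eqn} is calibrated to force $\Psi_{2a}(\eta/8) \leq \delta$ rather than merely $\Psi_{2a}(\eta) \leq \delta$: the built-in $1/8$ is what produces the required second power of $\delta$ once one passes from $\Psi_{2a}$ to $\Psi_a$.
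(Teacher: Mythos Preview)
Your proof is correct, and in two places it departs from the paper's argument in ways worth noting.

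For the thin-box piece $\cE_{hi}^{(2)}$, the paper (there denoted $Z_\partial^j$) works one variable at a time, establishing the ``preparatory claim'' $\sum_k \|\widehat{\phi_{L,k}}\|_{L^2([-r,r])}^2 \leq \widehat{C}\delta_L r$ from the Fourier-decay estimate \eqref{Fourier_decay_CM2:eqn}, and then tensors up. Your Bessel argument---observing that $\{\psi_{(\bL,\bk)}\}_{\bk}$ is orthonormal with common support of measure $2^d\mu_d(\bL)$, so $\sum_\bk |\widehat{\psi_{(\bL,\bk)}}(\xi)|^2 \leq 2^d\mu_d(\bL)$---is cleaner and uses nothing about the specific basis beyond orthonormality and support. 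For the far-frequency piece $\cE_{hi}^{(1)}$ (the paper's $Z_{osc}$), the paper keeps $\Psi_{2a}$ intact, converts the lattice sum to an integral via Lemma~\ref{lem:lat_sum_Phi}, and then invokes the tail bound \eqref{Psi_bd} to produce a factor $\Psi_{2a}(\eta/4) \leq \delta$. You instead factor $\Psi_{2a}(t) = \Psi_a(t)^2 \leq \Psi_a(\eta)\Psi_a(t)$ on $\{t \geq \eta\}$, extract $\Psi_a(\eta) \leq \delta^2$ up front, and finish with the uniform lattice bound $\sum_{\bm \in (\Z+1/2)^d}\Psi_a(\pi|\omega-\bm|) \lesssim_d 1$. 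This avoids the tail-integral estimate entirely and yields the slightly sharper bound $\delta^2 r^d$.

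One small overclaim in your closing commentary: extracting $\delta^2$ is not essential. A single factor of $\delta$ already suffices for the target $\delta r^d$, since after summing and integrating one is left with $\mu_d(S(r)) \lesssim r^d$; indeed the paper's proof extracts only $\Psi_{2a}(\eta/4) \leq \delta$. The factor $1/8$ in the calibration \eqref{eta_choice:eqn} is there for a different reason---it is consumed in the proof of Lemma~\ref{Elow:lem}, where the tail integral is taken from radius $\eta/2$ rather than $\eta$ and the bound \eqref{Psi_bd} then shrinks the argument by another factor of $4$.
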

\begin{proof}
We work to estimate
\[
\cE_{hi} = \sum_{\nu \in \cI_{hi}} \| \widehat{\psi_\nu} \|_{L^2(S(r))}^2.
\]
Recall from \eqref{index_sets:eqn} that $\nu = (\bL,\bk) \in \cI_{hi}$ iff either $\min_j \delta_{L_j} \leq \delta/r$ or $R_\eta(\nu) \subset \R^d \setminus S(r)$. Write
\[
\begin{aligned} 
&\cI_{hi} = \cI_{osc} \cup  \bigcup_{j=1}^d \cI^j_{\partial}, \mbox{ where}\\
&\cI^j_{\partial} := \{ \nu = (\bL, \bk) : \delta_{L_j} \leq \delta/r \}, \quad \mbox{for  } j = 1,\cdots,d;\\
&\cI_{osc} := \{ \nu = (\bL,\bk) : \min_j \delta_{L_j} > \delta/r, \mbox{ and } R_\eta(\nu) \subset \R^d \setminus S(r) \}.
\end{aligned}
\]

Define $Z^j_{\partial}$ ($j \in \{1,\cdots,d\}$) as the sum of  $\| \widehat{\psi_\nu} \|_{L^2(S(r))}^2$ over $\nu \in \cI^j_\partial$, and  $Z_{osc}$ as the sum of  $\| \widehat{\psi_\nu} \|_{L^2(S(r))}^2$ over $\nu \in \cI_{osc}$. Then: 
\[
\cE_{hi} \leq \sum_{j=1}^d Z^j_{\partial} + Z_{osc}.
\]

For $\nu = (\bL,\bk) \in \cI$, we have, by \eqref{tensor-basis},
\[
\widehat{\psi_{(\bL,\bk)}}(\xi_1,\xi_2,\cdots,\xi_d) = \prod_{r=1}^d \widehat{\phi_{L_r,k_r}}(\xi_r).
\]
Since $S(r) = rS$, and $S \subset B(0,1)$, we have $S(r) \subset B(0,r) \subset [-r,r]^d$. Thus,
\begin{equation}\label{Fourier-tensor}
\| \widehat{\psi_\nu} \|_{L^2(S(r)))}^2 \leq \| \widehat{\psi_\nu}\|_{L^2([-r,r]^d)}^2 = \prod_{j=1}^d \| \widehat{\phi_{L_j,k_j}}\|_{L^2([-r,r])}^2.
\end{equation}
By summing \eqref{Fourier-tensor} over all those $\nu = (\bL,\bk) \in \cI_{\partial}^1$, 
for which $\delta_{L_1} \leq \delta/r$, we have:
\begin{equation}\label{Z1_partial}
\begin{aligned}
&Z^1_{\partial}  = \sum_{\nu \in \cI_\partial^1} \| \widehat{\psi_\nu} \|_{L^2(S(r))}^2 \\
&\leq \left(\sum_{(L,k) \in \cW \times \N} \| \widehat{\phi_{L,k}}\|_{L^2([-r,r])}^2 \right)^{d-1} \sum_{(L_1,k_1) \in \cW \times \N : \delta_{L_1} \leq \delta/r} \|\widehat{\phi_{L_1,k_1}}\|_{L^2([-r,r])}^2.
\end{aligned}
\end{equation}

To continue the bound of \eqref{Z1_partial}, we establish:

\begin{claim}[\textbf{Preparatory claim}] 
$\sum_{k \in \N} \| \widehat{\phi_{L,k}} \|_{L^2([-r,r])}^2 \leq \widehat{C} \delta_L r$, for any $L \in \cW$,
for a universal constant $\widehat{C} > 0$.
\end{claim}
\begin{proof}[
\textbf{Proof of preparatory claim}] Recall from \eqref{Fourier_decay_CM2:eqn} that
\[
|\widehat{\phi_{L,k}}(\xi)|^2 \leq 2A^2 \delta_L \sum_{\sigma \in \{\pm 1\}} \Psi_{2a} ( \xi \delta_L - \sigma \pi (k+1/2)).
\]
with $\Psi_a(t) = \exp(- a |t|^{2/3})$. Thus, for fixed $L \in \cW$,
\[
\sum_{k \in \N} \| \widehat{\phi_{L,k}} \|_{L^2([-r,r])}^2 =2 A^2 \delta_L \int_{-r}^r \sum_{\sigma \in \{ \pm 1\}} \sum_{k \in \N} \Psi_{2a}(   \xi \delta_L - \sigma \pi (k+1/2)) d \xi.
\]
For fixed  $\xi \in \R$, we compare a Riemann sum to a Riemann integral, giving:
\[\sum_{\sigma \in \{\pm 1\} } \sum_{k \in \N} \Psi_{2a}(\xi \delta_L - \sigma \pi (k+1/2)) \leq C \int_{t \in \R} \Psi_{2a}(t) dt \leq C_0.\]
Combining this with the previous bound, we establish the claim:
\[
\sum_{k \in \N} \| \widehat{\phi_{L,k}} \|_{L^2([-r,r])}^2 \leq 2 A^2 C_0 \delta_L \int_{-r}^r d \xi = 4 A^2 C_0 \delta_L r.
\]
\end{proof}

Returning to the proof of the lemma, note that $\sum_{L \in \cW} \delta_L = 1$ because $\cW$ is a partition of $(0,1)$. Thus, the preparatory claim implies
\[
\sum_{(L,k) \in \cW \times \N} \| \widehat{\phi_{L,k}} \|_{L^2([-r,r])}^2 \leq \widehat{C} \sum_{L \in \cW} \delta_L r = \widehat{C} r.
\]
Similarly,
\[
\sum_{(L_1,k_1) \in \cW \times \N : \delta_{L_1} \leq \delta/r} \| \widehat{\phi_{L_1,k_1}} \|_{L^2([-r,r])}^2 \leq \widehat{C} \sum_{L_1 \in \cW : \delta_{L_1} \leq \delta/r} \delta_{L_1} r \leq 4 \widehat{C} \delta.
\]
where the latter inequality uses that
$\bigcup_{L\in \cW:  \delta_L\leq \delta/r}L \subset [0,2\delta/r]\cup [1-2\delta/r,1]$ (see \eqref{good_geom1:eqn}). 
So,  $\sum_{L \in \cW : \delta_L \leq \delta/r} \delta_L$ is bounded by the Lebesgue measure of $[0,2\delta/r] \cup [1-2 \delta/r,1]$, which is at most $4 \delta/r$.

Returning to \eqref{Z1_partial}, the above estimates imply
\[
Z^1_\partial \leq (\widehat{C} r)^{d-1} \cdot (4 \widehat{C} \delta) \leq 4 \widehat{C}^d \delta r^{d-1}.
\]
Therefore, by symmetry, 
\begin{equation}\label{Z_partial:eqn}
Z^j_\partial = Z^1_\partial \leq 4 \widehat{C}^d \delta r^{d-1}, \;\; \mbox{for } j=1,\cdots,d.
\end{equation}

It remains to bound $Z_{osc}$. Note that
\begin{equation}\label{Z1_osc}
Z_{osc} = \sum_{\nu \in \cI_{osc}} \| \widehat{\psi_\nu} \|^2_{L^2(S(r))}
\end{equation}
By integrating both sides of \eqref{Fourier_decay_CM3:eqn} over $\xi \in S(r)$, we get:
\[
\| \widehat{\psi_\nu} \|_{L^2(S(r))}^2 \leq 2^d A^{2d} \mu_d(\bL) \sum_{\sigma \in \{\pm 1\}^d} \int_{\xi \in S(r)} \Psi_{2a}( \pi \cdot | \tau_{\bL}(\xi) -    \sigma \circ (\bk+(1/2)\mathbf{1})|) d \xi,
\]
where $\tau_{\bL}$ is the rescaling map on $\R^d$, $\tau_{\bL}(\xi_1, \cdots, \xi_d) = \pi^{-1} ( \delta_{L_1} \xi_1, \cdots,  \delta_{L_d} \xi_d)$, with Jacobian $\det \tau_\bL = \pi^{-d} \mu_d(\bL)$, and $\circ$ is the entrywise product of vectors. By a change of variables,
\begin{equation}\label{psi_nu:eqn}
\| \widehat{\psi_\nu} \|_{L^2(S(r))}^2 \lesssim \int_{\xi \in \tau_{\bL}S(r)} \sum_{\sigma \in \{ \pm 1 \}^d} \Psi_{2a}(\pi \cdot |  \xi -  \sigma \circ (\mathbf{k} + (1/2) \mathbf{1})|) d \xi.
\end{equation}

Recall that $\nu = (\bL,\bk) \in \cI_{osc} \iff \min_j \delta_{L_j} \geq \delta/r$  and $R_\eta(\nu) \subset \R^d \setminus S(r)$. By symmetry of the sets $S(r)$ and $R_\eta(\nu)$ with respect to coordinate reflections $\tau_\sigma$, and by \eqref{R_eta_decomp:eqn}, 
\begin{equation}\label{inclusion_1}
R_\eta(\nu) \subset \R^d \setminus S(r) \iff R^1_\eta(\nu) \subset \R^d \setminus S(r).
\end{equation}
By the form of $R^1_\eta(\nu)$ in \eqref{R_eta_decomp:eqn},
\begin{equation}\label{box_rescale:eqn}
\begin{aligned}
\tau_\bL [R^1_\eta(\nu)] &= \prod_{j=1}^d [(k_j + 1/2) -  \pi^{-1} \eta, (k_j + 1/2) + \pi^{-1} \eta] \\
&= (\bk + (1/2) \mathbf{1}) + [ - \eta/\pi, \eta/\pi]^d.
\end{aligned}
\end{equation}
Also,  $\tau_{\bL} [\R^d \setminus S(r)] = \R^d \setminus \tau_{\bL} [S(r)]$. So, we can apply the rescaling $\tau_{\bL} : \R^d \rightarrow \R^d$ in \eqref{inclusion_1}, and deduce that
\begin{align*}
R_\eta(\nu) \subset \R^d \setminus S(r) &\iff (\bk + (1/2) \mathbf{1}) + [ - \eta/\pi, \eta/\pi]^d \subset \R^d \setminus \tau_{\bL} [S(r)] \\
&\iff \dist_\infty(  \mathbf{k} + (1/2) \mathbf{1}, \tau_{\bL} [S(r)]) \geq  \eta/\pi.
\end{align*}
Returning to the first line of this paragraph, we have proven that
\begin{equation}\label{k_cond_2:eqn}
\nu = (\bL,\bk) \in \cI_{osc} \iff 
\left\{
\begin{aligned}
&\delta_{L_j} \geq \delta/r \mbox{ for all } j\in \{1,\cdots,d\}, \mbox{ and } \\
&\dist_\infty(  \mathbf{k} + (1/2) \mathbf{1}, \tau_{\bL} [S(r)]) \geq  \eta/\pi.
\end{aligned}
\right.
\end{equation}
For $\bL \in \cW^{\otimes d}$, let $\mathcal{K}(\bL) := \{ \bk \in \N^d : \dist_\infty(  \mathbf{k} + (1/2) \mathbf{1}, \tau_{\bL} S(r)) \geq  \eta/\pi \}$. Using \eqref{k_cond_2:eqn} and \eqref{psi_nu:eqn} in \eqref{Z1_osc}, we get
\begin{equation}\label{Z1_osc:eqn}
\begin{aligned}
Z_{osc} \lesssim & \sum_{ \bL \in \cW^{\otimes d}} \sum_{\mathbf{k} \in \mathcal{K}(\bL)} \sum_{\sigma \in \{\pm 1\}^d} \int_{\xi \in \tau_{\bL} S(r)} \Psi_{2a}(|\pi \xi - \pi \sigma \circ (\mathbf{k} + (1/2) \mathbf{1})|) d \xi.
\end{aligned}
\end{equation}
By definition of $\mathcal{K}(\bL)$, for relevant $(\bL, \mathbf{k}, \sigma, \xi)$ as above, $\dist_\infty(\xi, \sigma \circ (\mathbf{k} + (1/2) \mathbf{1})) \geq \eta/\pi$, thus also $| \xi - \sigma \circ (\mathbf{k} + (1/2) \mathbf{1})| \geq \eta/\pi$. Further, the map $(\sigma, \bk) \mapsto \sigma \circ (\mathbf{k} + (1/2) \mathbf{1})$ is injective into the lattice $( \Z + 1/2)^d$. Therefore, by Lemma \ref{lem:lat_sum_Phi}, for any $\xi \in \tau_{\bL} S(r)$:
\[
\begin{aligned}
& \sum_{\mathbf{k} \in \mathcal{K}(\bL)}  \sum_{\sigma \in \{\pm 1\}^d}  \Psi_{2a}(|\pi \xi - \pi \sigma \circ (\mathbf{k} + (1/2) \mathbf{1})|) d \xi \lesssim \int_{\xi' :  | \xi - \xi'| > \eta/\pi} \Psi_{2a}(|\pi \xi - \pi \xi'|) d \xi' \\
& \lesssim \int_{\xi' :  | \xi - \xi'| > \eta} \Psi_{2a}(|\xi - \xi'|) d \xi' = \int_{x : |x| > \eta} \Psi_{2a}( |x|) dx \lesssim \Psi_{2a}(\eta/4),
\end{aligned}
\]
where the last inequality uses \eqref{Psi_bd}. Substituting the above in \eqref{Z1_osc:eqn}, we get:
\[
\begin{aligned}
Z_{osc} \lesssim \Psi_{2a}(\eta/4) \sum_{ \bL \in \cW^{\otimes d}} \mu_d(\tau_{\bL} S(r)) &= \pi^{-d} \Psi_{2a}(\eta/4)  \mu_d(S(r)) \sum_{\bL \in \cW^{\otimes d}}  \mu_d(\bL) \\
& = \pi^{-d} \Psi_{2a}(\eta/4) \mu_d(S(r)),
\end{aligned}
\]
where we used that $\det(\tau_\bL) = \pi^{-d} \mu_d(\bL)$, and that $\cW^{\otimes d}$ is a partition of $ (0,1)^d$, which implies $ \sum_{\bL} \mu_d(\bL) = 1$. From \eqref{eta:eqn}, $\Psi_{2a}(\eta/4) \leq \Psi_{2a}(\eta/8) \leq \delta$. Also, $\mu_d(S(r)| = \mu_d(S) r^d \leq \mu_d(B(0,1)) r^d$, so
\[
Z_{osc} \lesssim \delta r^d.
\]
Combining this with our earlier bound \eqref{Z_partial:eqn} on $Z_\partial^j$, we obtain
\[
\cE_{hi} \leq \sum_{j=1}^d Z_{\partial}^j + Z_{osc} \lesssim \delta r^{d-1} +  \delta r^d \lesssim \delta r^d.
\]
This completes the proof of the lemma.
\end{proof}

\begin{lemma}\label{Elow:lem}
$\cE_{low} \lesssim \delta r^d$.
\end{lemma}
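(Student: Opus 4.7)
The plan is to mirror the proof of Lemma~\ref{Ehi:lem}: first apply Plancherel together with the pointwise Fourier decay bound \eqref{Fourier_decay_CM3:eqn} to each wave packet $\psi_\nu$, then integrate over the ``forbidden'' frequency region $\R^d \setminus S(r)$, and finally sum the resulting tail estimates over $\nu \in \cI_{low}$. The crucial difference from the $\cE_{hi}$ argument is that, for $\nu = (\bL,\bk) \in \cI_{low}$, the inclusion $R_\eta(\nu) \subset S(r)$ forces each peak lattice point $\sigma \circ (\bk+(1/2)\mathbf{1})$ to lie \emph{deep inside} $\tau_\bL S(r)$, so the portion of $\widehat{\psi_\nu}$ outside $S(r)$ is controlled by $\Psi_{2a}$ evaluated at a large argument.

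First I would write $\cE_{low} \leq \sum_{\nu \in \cI_{low}} \|\widehat{\psi_\nu}\|_{L^2(\R^d \setminus S(r))}^2$ by Plancherel and the support condition on $\psi_\nu$, then insert the bound \eqref{Fourier_decay_CM3:eqn} and perform the change of variables $\xi' = \tau_\bL(\xi)$ exactly as in Lemma~\ref{Ehi:lem}. The factor $\mu_d(\bL)$ in \eqref{Fourier_decay_CM3:eqn} cancels against the Jacobian $\pi^{-d}\mu_d(\bL)$, leaving
\[
\|\widehat{\psi_{(\bL,\bk)}}\|_{L^2(\R^d \setminus S(r))}^2 \lesssim \sum_{\sigma \in \{\pm 1\}^d} \int_{\R^d \setminus \tau_\bL S(r)} \Psi_{2a}\bigl(\pi \bigl| \xi' - \sigma \circ (\bk + (1/2)\mathbf{1}) \bigr|\bigr)\, d\xi'.
\]
Using the rescaling identity \eqref{box_rescale:eqn} and the coordinate-wise symmetry of $\tau_\bL S(r)$ (inherited from $S$), the condition $\nu \in \cI_{low}$ is equivalent to $\dist_\infty\bigl(\sigma \circ (\bk + (1/2)\mathbf{1}),\, \R^d \setminus \tau_\bL S(r)\bigr) \geq \eta/\pi$ for every $\sigma \in \{\pm 1\}^d$. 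Hence, for any $\xi'$ in the integration domain, $|\xi' - \sigma \circ (\bk + (1/2)\mathbf{1})| \geq \eta/\pi$, and a shift of variable $y = \pi(\xi' - \sigma \circ (\bk + (1/2)\mathbf{1}))$ together with the integral estimate \eqref{Psi_bd} and the choice of $\eta$ in \eqref{eta:eqn} produce the per-element bound $\|\widehat{\psi_\nu}\|_{L^2(\R^d \setminus S(r))}^2 \lesssim \delta$, uniformly in $\nu \in \cI_{low}$.

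To conclude, I would bound $\#\cI_{low}$ as follows: since $\eta/\pi \geq 1/2$, for every $(\bL,\bk) \in \cI_{low}$ the unit cube $(\bk+(1/2)\mathbf{1}) + [-1/2,1/2]^d$ sits inside the larger box $(\bk+(1/2)\mathbf{1}) + [-\eta/\pi, \eta/\pi]^d \subset \tau_\bL S(r)$, and these unit cubes are pairwise disjoint as $\bk$ ranges over $\N^d$. A volume comparison then gives $\#\{\bk : (\bL,\bk) \in \cI_{low}\} \leq \mu_d(\tau_\bL S(r)) = \pi^{-d} \mu_d(\bL)\mu_d(S(r))$. Summing over $\bL \in \cW^{\otimes d}$, using that $\cW^{\otimes d}$ tiles $(0,1)^d$ so that $\sum_\bL \mu_d(\bL) = 1$, and bounding $\mu_d(S(r)) \leq \mu_d(B(0,r)) \lesssim r^d$, yields $\#\cI_{low} \lesssim r^d$. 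Combining with the per-element bound $\lesssim \delta$ delivers $\cE_{low} \lesssim \delta\, r^d$.

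The only potential obstacle is careful bookkeeping of the change of variables and invoking the coordinate-wise symmetry of $\tau_\bL S(r)$ to control all $2^d$ peak frequencies simultaneously. Since both ingredients already appeared in the proof of Lemma~\ref{Ehi:lem}, I expect no substantively new difficulty: the argument is essentially dual to the one for $\cE_{hi}$, trading ``peak far from $S(r)$'' for ``peak far from $\R^d \setminus S(r)$.''
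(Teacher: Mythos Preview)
Your proposal is correct and reaches the same conclusion, but it takes a genuinely different route from the paper's proof. The paper interchanges the order of operations: it first fixes $\xi \in \R^d \setminus \tau_\bL S(r)$ and sums the envelope $\Psi_{2a}$ over the lattice points $\sigma \circ (\bk + (1/2)\mathbf{1})$ via Lemma~\ref{lem:lat_sum_Phi}, converting the $\bk$-sum into an integral over $\tau_\bL S(r)$; this produces a double integral in $(\xi,\xi')$ that is then estimated by \eqref{Psi_bd} and summed over $\bL$. You instead bound each term $\|\widehat{\psi_\nu}\|_{L^2(\R^d \setminus S(r))}^2$ uniformly by $\lesssim \delta$ using only the distance condition and the tail estimate \eqref{Psi_bd}, and then separately bound $\#\cI_{low} \lesssim r^d$ by a disjoint-cubes volume argument. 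Your approach is more elementary (it never invokes Lemma~\ref{lem:lat_sum_Phi}) and arguably cleaner for this lemma; the paper's approach has the advantage of being structurally parallel to its treatment of $Z_{osc}$ in Lemma~\ref{Ehi:lem}, where summing over lattice points before integrating is essential because there is no finite count to exploit. One minor imprecision: you write that $\nu \in \cI_{low}$ is ``equivalent to'' the distance condition, but $\cI_{low}$ also carries the sidelength constraint $\min_j \delta_{L_j} > \delta/r$; since you only use the forward implication, this does not affect the argument.
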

\begin{proof}

We work to estimate
\[
\cE_{low} = \sum_{\nu \in \cI_{low}} \| \widehat{\psi_\nu} \|_{L^2(\R^d \setminus S(r))}^2.
\]
As in the proof of  \eqref{psi_nu:eqn}, we integrate both sides of \eqref{Fourier_decay_CM3:eqn} over $\xi \in \R^d \setminus S(r)$, and make a change of variables, to get:
\begin{equation}\label{psi_nu_comp:eqn}
\| \widehat{\psi_\nu} \|_{L^2(\R^d \setminus S(r))}^2 \lesssim \int_{\xi \in \R^d \setminus \tau_{\bL} S(r)} \sum_{\sigma \in \{ \pm 1 \}^d} \Psi_{2a}(|\pi \xi -  \pi \sigma \circ (\mathbf{k} + (1/2) \mathbf{1})|) d \xi,
\end{equation}
where $\tau_{\bL}$ is the rescaling map on $\R^d$ defined by $\tau_\bL(\xi) = (\pi^{-1} \delta_{L_1} \xi_1, \cdots, \pi^{-1} \delta_{L_d} \xi_d)$.

Recall from \eqref{index_sets:eqn} that $\nu = (\bL,\bk) \in \cI_{low} \iff \min_j \delta_{L_j} > \delta/r$, and $ R_\eta(\bL,\bk) \subset S(r)$. Now, replicating the proof of \eqref{k_cond_2:eqn},
\begin{equation}\label{Ilow_cond:eqn}
\nu = (\bL,\bk) \in \cI_{low} \iff 
\left\{
\begin{aligned}
&\delta_{L_j} > \delta/r \mbox{ for all } j\in \{1,\cdots,d\}, \mbox{ and } \\
&\dist_\infty(  \mathbf{k} + (1/2) \mathbf{1}, \R^d \setminus \tau_{\bL} [S(r)]) \geq  \eta/\pi.
\end{aligned}
\right.
\end{equation}
For $\bL \in \cW^{\otimes d}$, let $\overline{\cK}(\bL) := \{ \bk \in \N^d : \dist_\infty(\bk + (1/2) \mathbf{1}, \R^d \setminus  \tau_{\bL} S(r)) \geq \eta/\pi \}$. Thus,
\begin{equation}\label{E_low1:eqn}
\begin{aligned}
\cE_{low} \lesssim \sum_{\bL \in \cW^{\otimes d}} \sum_{\mathbf{k} \in \overline{\cK}(\bL)}  \sum_{\sigma \in \{\pm 1\}^d} \int_{\xi \in \R^d \setminus \tau_{\bL} S(r)} \Psi_{2a}(|\pi \xi - \pi \sigma \circ ( \mathbf{k} + (1/2) \mathbf{1})|) d \xi.
\end{aligned}
\end{equation}
For $(\bk, \sigma)$ as above, the points $\xi_{\sigma,\bk} = \sigma \circ ( \mathbf{k} + (1/2) \mathbf{1})$ satisfy $\xi_{\sigma,\bk} \in (\Z + 1/2)^d$, while $\dist_\infty(\xi_{\sigma,\bk},  \R^d \setminus \tau_{\bL} S(r)) \geq \eta/\pi$, with the last inequality using that $\bk \in \overline{\cK}(\bL)$ and the coordinate-symmetry of $\R^d \setminus \tau_{\bL} S(r)$. In particular, $\dist_\infty(\xi , \xi_{\sigma,\bk}) \geq \eta/\pi$ for $\xi \in \R^d \setminus \tau_{\bL} S(r)$. The triangle inequality implies $\xi_{\sigma, \bk} + [-1/2,1/2]^d \subset \{ \xi' \in \tau_{\bL} S(r): \dist_\infty(\xi,\xi') \geq \eta/\pi - 1\} \subset \{ \xi'  \in \tau_{\bL} S(r): |\xi - \xi'| \geq \eta/\pi - 1\}$. Note $\eta \geq 2\pi$, by definition, so $\eta/\pi - 1 \geq \eta/(2 \pi)$.
Consequently, by Lemma \ref{lem:lat_sum_Phi},  for any $\xi \in \R^d \setminus  \tau_{\bL} S(r)$,
\[
\begin{aligned}
\sum_{\mathbf{k} \in \overline{\mathcal{K}}(\bL)}  \sum_{\sigma \in \{\pm 1\}^d}  \Psi_{2a}(|\pi \xi - \pi \xi_{\sigma,\bk}|)  &\lesssim \int_{\xi' \in \tau_{\bL} S(r)  : |\xi - \xi'| \geq \eta/(2\pi)} \Psi_{2a}(|\pi \xi - \pi \xi'|) d \xi'.
\end{aligned}
\]
Substituting this inequality in \eqref{E_low1:eqn},
\[
\cE_{low} \lesssim \sum_{\bL \in \cW^{\otimes d}} \int_{(\xi, \xi') \in (\R^d \setminus \tau_{\bL} S(r)) \times \tau_{\bL} S(r) : |\xi - \xi' | \geq \eta/(2 \pi)} \Psi_{2a}(|\pi \xi- \pi \xi'|) d \xi d  \xi'
\]
By the change of variables $\xi'' =  \pi( \xi- \xi')$, we change the integral into a $d \xi' d \xi''$ integral integrated over the larger set of all $(\xi',\xi'') \in \tau_{\bL} S(r) \times \R^d$ with $|\xi''| \geq \eta/2$. Thus,
\[
\begin{aligned}
\cE_{low} &\lesssim \sum_{\bL \in \cW^{\otimes d}} \int_{\xi' \in \tau_{\bL} S(r)} \int_{ |\xi''| \geq \eta/2} \Psi_{2a}( |\xi''|) d \xi'' d \xi' \\
&= \sum_{\bL \in \cW^{\otimes d}} \mu_d( \tau_\bL S(r)) \int_{|\xi''| \geq \eta/2} \Psi_{2a}(|\xi''|).
\end{aligned}
\]
First using \eqref{Psi_bd}, $\mu_d(S(r)) \leq \mu_d(B(r)) \lesssim r^d$, and that the Jacobian of the transformation $\tau_\bL$ is $\det \tau_\bL = \pi^{-d} \mu_d(\bL)$, and next using \eqref{eta:eqn} and $\sum_{\bL \in \cW^{\otimes d}} \mu_d(\bL) = 1$,
\[
\cE_{low} \lesssim \Psi_{2a}(\eta/8) r^d \sum_{\bL \in \cW^{\otimes d}} \mu_d( \bL) \lesssim \delta r^d.
\]
\end{proof}

\begin{lemma}\label{count1:lem}
There exists a dimensional constant $C_d$ such that 
\[
\# ( \cI_{res}) \leq C_d \max \left\{ r^d \log(r/\delta)^{5/2}, \log(r/\delta)^{(5/2)d}\right\}.
\]
\end{lemma}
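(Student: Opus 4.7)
The plan is to first extract a concrete geometric condition for membership in $\cI_{res}$, then apply the lattice-point Corollary~\ref{cor:lat} for each fixed box $\bL$, and finally sum over $\bL \in \cW^{\otimes d}$.

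An index $\nu = (\bL,\bk)$ lies in $\cI_{res}$ precisely when every sidelength satisfies $\delta_{L_j} > \delta/r$, yet $R_\eta(\bL,\bk)$ is neither contained in $S(r)$ nor disjoint from $S(r)$. Because both $S(r)$ (by the coordinate-wise symmetry of $S$) and $R_\eta(\bL,\bk) = \bigcup_{\sigma \in \{\pm 1\}^d} \tau_\sigma[R_\eta^1(\bL,\bk)]$ (by definition \eqref{R_eta_decomp:eqn}) are invariant under the reflections $\tau_\sigma$, the containment and disjointness conditions on $R_\eta(\bL,\bk)$ are equivalent to the same conditions on the single box $R_\eta^1(\bL,\bk)$. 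Hence membership in $\cI_{res}$ is equivalent to $R_\eta^1(\bL,\bk)$ meeting $\partial S(r)$. Applying the rescaling $\tau_\bL$ and using \eqref{box_rescale:eqn}, this condition further becomes
\[
\dist_\infty\bigl(\bk + (1/2)\mathbf{1},\ \partial \tau_\bL S(r)\bigr) \leq \eta/\pi.
\]

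Fix $\bL$ with $\min_j \delta_{L_j} > \delta/r$. Since $S \subset B(0,1)$, the image $\tau_\bL S(r)$ lies in the ellipsoid $E(\pi^{-1}\delta_{L_1}r,\dots,\pi^{-1}\delta_{L_d}r)$; by \eqref{eta_choice:eqn} one has $\eta/\pi \geq 1$, so Corollary~\ref{cor:lat} applied to the shifted lattice $(1/2)\mathbf{1} + \Z^d$ bounds the number of admissible $\bk$ by a quantity of the form
\[
C_d \sum_{j=0}^{d-1} r^j\, \eta^{d-j} \max_{i_1 < \cdots < i_j} \delta_{L_{i_1}} \cdots \delta_{L_{i_j}}.
\]
To sum over $\bL$, set $\cW_\delta := \{L \in \cW : \delta_L > \delta/r\}$. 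The explicit dyadic form \eqref{W:defn} of $\cW$ gives $\#(\cW_\delta) \leq C \log(r/\delta)$, while $\sum_{L \in \cW_\delta} \delta_L \leq 1$ because $\cW$ partitions $(0,1)$. Dominating the max by a sum of products and separating the $d$-fold product gives
\[
\sum_{\bL \in \cW_\delta^{\otimes d}} \max_{i_1 < \cdots < i_j} \delta_{L_{i_1}} \cdots \delta_{L_{i_j}} \leq \binom{d}{j}\!\left(\sum_{L \in \cW_\delta} \delta_L\right)^{\!j} \#(\cW_\delta)^{d-j} \leq C_d (\log(r/\delta))^{d-j}.
\]
Combined with the bound $\eta \leq C (\log(1/\delta))^{3/2} \leq C (\log(r/\delta))^{3/2}$ from \eqref{eta_choice:eqn}, this gives $\#(\cI_{res}) \leq C_d \sum_{j=0}^{d-1} r^j (\log(r/\delta))^{5(d-j)/2}$, and the claimed bound then follows by taking the extreme terms $j = d-1$ and $j = 0$.

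The main obstacle is the bookkeeping in the final summation over $\bL$: the Whitney family $\cW_\delta$ is simultaneously \emph{small} (only $O(\log(r/\delta))$ intervals) and \emph{non-negligible} (total length bounded by a constant), and it is the balance of these two properties through the elementary-symmetric sum above that produces the characteristic exponents $5/2$ and $5d/2$ in the error term.
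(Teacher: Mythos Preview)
Your proof is correct and follows essentially the same route as the paper: extract the condition $\dist_\infty(\bk+(1/2)\mathbf 1,\partial\tau_\bL S(r))\le\eta/\pi$ via coordinate symmetry and \eqref{box_rescale:eqn}, bound the count of $\bk$ for each $\bL$ using Corollary~\ref{cor:lat} and \eqref{rescaled_set_cont:eqn}, and then sum over $\bL$ using $\sum_{L\in\cW}\delta_L=1$ together with $\#\cW_\delta\lesssim\log(r/\delta)$. The only cosmetic difference is that the paper handles the $\max_{i_1<\cdots<i_j}$ in the final sum by ordering the sidelengths $\delta_{L_1}\ge\cdots\ge\delta_{L_d}$ (picking up a $d!$) rather than bounding the max by the symmetric sum as you do; both arrive at the same geometric-series estimate $\sum_{j=0}^{d-1} r^j(\log(r/\delta))^{5(d-j)/2}$.
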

\begin{proof}
If  $\nu = (\bL,\bk) \in \cI_{res}$ then $\nu \notin \cI_{hi}$. By definition of $\cI_{hi}$ (see \eqref{index_sets:eqn}), 
\begin{equation}\label{Lcond:eqn}
\nu \in \cI_{res} \implies \delta_{L_j} > \delta/r  \; \forall j.
\end{equation}
Because $\cW$ is a family of subintervals of $[0,1]$, we have $\delta_L \leq 1$ for all $L \in \cW$. By definition of $\cW$, there are exactly $2$ intervals $L \in \cW$ satisfying  $\delta_L = 2^{- k}$ for each $k \geq 2$. Therefore, 
\begin{equation} \label{Wh_count:eqn}
\# \{ L \in \cW : \delta_L \geq \delta/r \} \leq 2 \log(r/\delta).
\end{equation}

If $\nu \in \cI_{res}$ then $\nu \notin \cI_{low}$ and $\nu \notin \cI_{hi}$, and thus   $R_\eta(\nu) \not\subset S(r)$ and $R_\eta(\nu) \not\subset \R^d \setminus S(r)$ (see \eqref{index_sets:eqn}); thus, $\nu \in \cI_{res} \implies R_\eta(\nu) \cap \partial S(r) \neq \emptyset$. Note, $R_\eta(\nu)$ and $S(r)$ are coordinate-wise symmetric, and further, in \eqref{R_eta_decomp:eqn}, $R_\eta(\nu) = \bigcup_{j=1}^{2^d} R_\eta^j(\nu)$, where $R_\eta^2(\nu),\cdots,R_\eta^{2^d}(\nu)$ are given by applying coordinate-wise reflections to $R_\eta^1(\nu)$. 
\[
\nu \in \cI_{res} \implies R_\eta^1(\nu) \cap \partial S(r) \neq \emptyset.
\]
We  apply the rescaling map $\tau_\bL$ on $\R^d$, $\tau_\bL(\xi_1,\cdots,\xi_d) = \pi^{-1} (\delta_{L_1} \xi_1, \cdots, \delta_{L_d} \xi_d)$, in the condition above. Using \eqref{box_rescale:eqn},
\begin{equation}\label{star2}
\begin{aligned}
\nu \in \cI_{res} &\implies ( ( \bk + (1/2) \mathbf{1}) + [-\eta/\pi, \eta/\pi]^d) \cap  \partial ( \tau_\bL S(r)) \neq \emptyset \\
&\iff \dist_\infty( \mathbf{k} + (1/2) \mathbf{1},  \partial (\tau_{\bL} S(r)) ) \leq \eta/\pi.
\end{aligned}
\end{equation}

Let $N(\bL) := \# \{ \bk \in \N^d : \dist_\infty( \mathbf{k} + (1/2) \mathbf{1},  \partial (\tau_{\bL} S(r)) ) \leq \eta/\pi \}$ for fixed $\bL \in \cW^{\otimes d}$. Then, by \eqref{Lcond:eqn} and \eqref{star2},
\begin{equation}
\label{Ires_bd}
\#(\cI_{res}) \leq \sum_{\bL \in \cW^{\otimes d} : \delta_{L_j} > \frac{\delta}{r} \forall j } N (\bL).
\end{equation}

By assumption, $S \subset B(0,1)$, so $S(r) \subset B(r)$, so
\begin{equation}\label{rescaled_set_cont:eqn}
\tau_{\bL} S(r) \subset \tau_{\bL}B(r) = E( \pi^{-1} r \delta_{L_1}, \cdots,  \pi^{-1} r  \delta_{L_d} ), \end{equation}
where $E(r_1, \cdots, r_d)$ is the ellipsoidal set consisting of all $(\xi_1, \cdots, \xi_d) \in \R^d$ satisfying $(\xi_1/r_1)^2 + \cdots + (\xi_d/r_d)^2 \leq 1$.  Thus, by Corollary \ref{cor:lat}, 
\[
\begin{aligned}
N(\bL) & \leq  2 \cdot 4^d \sum_{j=0}^{d-1} (\sqrt{d})^{d-j}  \kappa_{d-j} \max_{i_1 < \cdots < i_j} (\pi^{-1} r \delta_{L_{i_1}}) \cdots (\pi^{-1} r \delta_{L_{i_j}})  (\eta/\pi)^{d-j} \\
&\lesssim \sum_{j=0}^{d-1} r^j \eta^{d-j} \max_{i_1 < \cdots < i_j} \delta_{L_{i_1}} \cdots \delta_{L_{i_j}},
\end{aligned}
\]
with $\kappa_\ell$ the volume of the unit ball in $\R^\ell$. Returning to \eqref{Ires_bd},
\begin{equation}\label{Ires_bd:eqn}
\#(\cI_{res}) \lesssim \sum_{\bL \in \cW^{\otimes d} : \delta_{L_j} > \frac{\delta}{r} \forall j } \sum_{j=0}^{d-1} r^j \eta^{d-j} \max_{i_1 < \cdots < i_j} \delta_{L_{i_1}} \cdots \delta_{L_{i_j}} =: T_0.
\end{equation}
We estimate $T_0$ as follows. By symmetry, 
\[
T_0 \leq d! \sum_{\bL : \delta_{L_1} \geq \cdots \geq \delta_{L_d} \geq \delta/r} \sum_{j=0}^{d-1} r^j \eta^{d-j} \max_{i_1 < \cdots < i_j}  \delta_{L_{i_1}} \cdots  \delta_{L_{i_j}}.
\]
Here, the factor of $d!$ accounts for all possible permutations of the ordering of the $d$ interval lengths $\delta_{L_1},\cdots,\delta_{L_d}$. The maximum in the above sum is equal to $\delta_{L_1} \cdots \delta_{L_j}$. We sum over a larger collection of $\bL$, to give
\[
T_0 \lesssim \sum_{\bL: \delta_{L_{j+1}}, \cdots, \delta_{L_d} \geq \delta/r} \sum_{j=0}^{d-1} r^j \eta^{d-j}   \delta_{L_1} \cdots  \delta_{L_j}.
\]
The summands on the RHS above do not depend on $L_{j+1},\cdots,L_d$. The number of $(L_{j+1},\cdots,L_d) \in \cW^{d-j}$ satisfying $\delta_{L_{j+1}}, \cdots , \delta_{L_d} \geq \delta /r$ is at most $(2 \log(r /\delta))^{d-j}$, thanks to \eqref{Wh_count:eqn}. Thus,
\[
T_0 \lesssim  \sum_{j=0}^{d-1} r^j \eta^{d-j} (\log(r/\delta))^{d-j} \sum_{L_1,\cdots,L_j \in \cW} \delta_{L_1} \cdots \delta_{L_j}.
\]
Note  $\sum_{L_1,\cdots, L_j \in \cW} \delta_{L_1} \cdots \delta_{L_j} = \left( \sum_{L \in \cW} \delta_L \right)^j$. Also, $\sum_{L \in \cW} \delta_L = 1$, since $\cW$ is a partition of $(0,1)$. By definition, $\eta = 125 (a^{-1} \ln(1/\delta))^{3/2} \leq C \log(1/\delta)^{3/2}$, so
\[
T_0 \lesssim  \sum_{j=0}^{d-1} r^j (\log(r/\delta))^{(5/2)(d-j)}.
\]
Note that $\sum_{j=0}^{d-1} r^{j} (\log(r/\delta))^{(5/2)(d-j)}$ is a geometric series. Its terms are monotone. Thus, the sum of the series is at most $d$ times the maximum of the first term ($j=0$) and last term ($j=d-1$). We deduce that
\begin{equation}\label{T0_bd:eqn}
T_0 \lesssim \max \{ r^{d-1} \log(r/\delta)^{5/2}, \log(r/\delta)^{(5/2)d}\}.
\end{equation}
From \eqref{Ires_bd:eqn}, $\#(\cI_{res}) \lesssim T_0$. This concludes the proof of the lemma.
\end{proof}

\begin{lemma}\label{count2:lem}
There exists a dimensional constant $C_d$ such that 
\[
| \# \cI_{low} - (2 \pi)^{-d} \mu_d(S(r))| \leq C_d \max \left\{ r^d \log(r/\delta)^{5/2}, \log(r/\delta)^{(5/2)d} \right\}.
\]
\end{lemma}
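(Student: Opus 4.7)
The plan is to mirror the strategy of Lemma \ref{count1:lem}, but applied to the inner Minkowski neighborhood instead of the boundary strip, using the corresponding half of Corollary \ref{cor:lat}. The starting point is the characterization \eqref{Ilow_cond:eqn}: $\nu = (\bL, \bk)\in \cI_{low}$ iff $\delta_{L_j} > \delta/r$ for all $j$ and $\dist_\infty(\bk + (1/2)\mathbf 1, \R^d \setminus \tau_\bL[S(r)]) \geq \eta/\pi$. Writing $N_{low}(\bL)$ for the number of such $\bk \in \N^d$, I decompose
\[
\# \cI_{low} \;=\; \sum_{\bL\,:\,\delta_{L_j} > \delta/r\,\forall j} N_{low}(\bL),
\]
and I compare to $(2\pi)^{-d}\mu_d(S(r)) = \sum_{\bL \in \cW^{\otimes d}} 2^{-d}\mu_d(\tau_\bL [S(r)])$, using $\det \tau_\bL = \pi^{-d}\mu_d(\bL)$ and $\sum_\bL \mu_d(\bL) = 1$.

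The first step is to apply Corollary \ref{cor:lat} to the symmetric convex body $K = \tau_\bL[S(r)]$ and the lattice $\cL = (1/2)\mathbf 1 + \Z^d$. Because both $\cL$ and $K$ are coordinate-wise symmetric and no point of $\cL$ has a zero coordinate, exactly $2^{-d}$ of the lattice points involved sit in the positive orthant $(1/2)\mathbf 1 + \N^d$. This gives $|N_{low}(\bL) - 2^{-d}\mu_d(\tau_\bL[S(r)])| \leq 2^{-d} e_0(\eta/\pi)$. The inclusion $\tau_\bL S(r) \subset E(\pi^{-1} r \delta_{L_1},\ldots,\pi^{-1} r \delta_{L_d})$ from \eqref{rescaled_set_cont:eqn} lets me bound $e_0(\eta/\pi) \lesssim \sum_{j=0}^{d-1} r^j \eta^{d-j} \max_{i_1 < \cdots < i_j} \delta_{L_{i_1}} \cdots \delta_{L_{i_j}}$, which is precisely the quantity $T_0$ analyzed in Lemma \ref{count1:lem}. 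Summing the error over $\bL$ with $\min_j \delta_{L_j} > \delta/r$ and reusing the same Whitney-counting estimate \eqref{Wh_count:eqn} therefore contributes at most $C_d \max\{r^{d-1}\log(r/\delta)^{5/2},\log(r/\delta)^{(5/2)d}\}$.

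The second step handles the boxes thrown away by the constraint $\min_j \delta_{L_j} > \delta/r$. The missing mass is
\[
\sum_{\bL\,:\,\min_j \delta_{L_j} \leq \delta/r} 2^{-d}\mu_d(\tau_\bL[S(r)]) = (2\pi)^{-d}\mu_d(S(r)) \sum_{\bL\,:\,\min_j \delta_{L_j}\leq \delta/r} \mu_d(\bL).
\]
A union bound over the $d$ coordinates combined with $\sum_{L \in \cW :\, \delta_L \leq \delta/r} \delta_L \leq 4\delta/r$ (cf.\ \eqref{good_geom1:eqn}) gives $\sum_{\bL\,:\,\min_j \delta_{L_j}\leq \delta/r} \mu_d(\bL) \leq 4d\delta/r$. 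Since $\mu_d(S(r)) \leq \kappa_d r^d$, this contribution is $\lesssim \delta r^{d-1}$, absorbed into the main error.

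Combining the two steps via the triangle inequality yields the claimed bound. The main obstacle is simply the bookkeeping: one must be careful that the lattice $(1/2)\mathbf 1 + \Z^d$ meets the coordinate-symmetry hypothesis cleanly (it does, because the half-integer shift avoids coordinate hyperplanes), so that dividing by $2^d$ to isolate the positive orthant is exact rather than only approximate. Everything else is a rerun of estimates already proved.
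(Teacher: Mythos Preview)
Your proposal is correct and follows essentially the same approach as the paper's proof: both use the characterization \eqref{Ilow_cond:eqn}, reduce the $\N^d$-count to a full lattice count on $(\Z+1/2)^d$ via the $2^{-d}$ coordinate-symmetry factor, apply Corollary \ref{cor:lat} with the ellipsoid inclusion \eqref{rescaled_set_cont:eqn}, reuse the $T_0$ bound from Lemma \ref{count1:lem}, and handle the discarded boxes with $\min_j \delta_{L_j} \leq \delta/r$ via \eqref{good_geom1:eqn} to obtain a contribution $\lesssim \delta r^{d-1}$. The only cosmetic difference is that the paper first derives the per-$\bL$ estimate and then sums, whereas you first write $(2\pi)^{-d}\mu_d(S(r))$ as a sum over all $\bL$ and compare termwise; the substance is identical.
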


\begin{proof}
We apply \eqref{Ilow_cond:eqn} to bound  $\#\cI_{low}$. 

For  $\bL \in \cW^{\otimes d}$, let $\cM(\bL) := \# \{ \bk \in \N^d : \dist_\infty(\bk + (1/2) \mathbf{1}, \R^d \setminus \tau_{\bL} S(r) ) \geq \eta/\pi \}$. By symmetry,  $\cM(\bL) = 2^{-d} \cM_0(\bL)$, where $\cM_0(\bL) = \# \{ x \in  (\Z + 1/2)^d : \dist_\infty(x, \R^d \setminus \tau_{\bL} S(r) ) \geq \eta/\pi\}$. Due to \eqref{rescaled_set_cont:eqn}, we can apply Corollary \ref{cor:lat}, giving
\[
|\cM_0(\bL) -   \mu_d(\tau_{\bL} S(r)) | \leq C_d \sum_{j=0}^{d-1} \eta^{d-j} \max_{i_1 < \cdots < i_j} (r \delta_{L_{i_1}}) \cdots (r \delta_{L_{i_j}}).
\]
The Jacobian of the map $\tau_{\bL}$ is $\det \tau_{\bL} = \pi^{-d} \delta_{L_1} \cdots \delta_{L_d} = \pi^{-d} \mu_d(\bL)$. Thus, by multiplying the previous inequality by $2^{-d}$, and using the Jacobian change of volume formula, 
\begin{equation}\label{M_bd:eqn}
|\cM(\bL) -  (2 \pi)^{-d} \mu_d(S(r)) \mu_d(\bL)  | \lesssim \sum_{j=0}^{d-1} r^j \eta^{d-j} \max_{i_1 < \cdots < i_j}  \delta_{L_{i_1}} \cdots  \delta_{L_{i_j}}.
\end{equation}

By  \eqref{Ilow_cond:eqn} and the definition of $\cM(\bL)$, 
\begin{equation} \label{I_low_decomp:eqn}
\# \cI_{low} = \sum_{\bL \in \cW^{\otimes d} : \delta_{L_j} > \frac{\delta}{r} \forall j} \cM(\bL).
\end{equation}

Observe that
\[
\left|\sum_{\bL : \delta_{L_j} > \frac{\delta}{r} \forall j} \mu_d(\bL) - 1 \right| = \left|\sum_{\bL : \delta_{L_j} > \frac{\delta}{r} \forall j} \mu_d(\bL) - \mu_d((0,1)^d) \right|  = \sum_{\bL : \min_j \delta_{L_j} \leq \frac{\delta}{r} } \mu_d(\bL),
\]
where the last equality uses that $\cW^{\otimes d} = \{\bL \}$ is a partition of $ (0,1)^d$. Now, if $\min_j \delta_{L_j} \leq \delta/r$, then $\bL = L_1 \times \cdots \times L_d$ is contained in the region $[0,1]^d \setminus [2 \delta/r, 1-2 \delta/r]^d$; recall, by \eqref{good_geom1:eqn}, $\delta \in \cW, \delta_L \leq \delta_0 \implies L \subset [0,2 \delta_0] \cup [1-2 \delta_0,1]$. Hence,
\[
\sum_{\bL : \min_j \delta_{L_j} \leq \frac{\delta}{r}} \mu_d(\bL) \leq \mu_d([0,1]^d \setminus [2 \delta/r, 1-2 \delta/r]^d) \leq 2d (2 \delta/r) = 4d \delta/r.
\]
Thus,
\[
\left|\sum_{\bL : \delta_{L_j} > \frac{\delta}{r} \forall j} \mu_d(\bL) - 1 \right| \leq 4 d \delta/r.
\]

Now, by summing \eqref{M_bd:eqn} and using \eqref{I_low_decomp:eqn}, as well as the above line, we get
\begin{equation}\label{I_low_bd:eqn}
\begin{aligned}
| \# \cI_{low} - & (2\pi)^{-d} \mu_d(S(r))| \leq  \underbrace{(4d \delta/r) (2\pi)^{-d} \mu_d(S(r))}_{T_1}  \\
&+ C_d \underbrace{ \sum_{\bL : \delta_{L_j} > \frac{\delta}{r} \forall j} \sum_{j=0}^{d-1} r^j \eta^{d-j} \max_{i_1 < \cdots < i_j}  \delta_{L_{i_1}} \cdots  \delta_{L_{i_j}}}_{T_0}
\end{aligned}
\end{equation}
The term $T_1$ is controlled as follows, using that $S(r) \subset B(r)$,
\[
T_1 = (4d \delta/r) (2\pi)^{-d} \mu_d(S(r)) \lesssim (\delta/r) r^d \lesssim r^{d-1}.
\]
The term $T_0$ is the same term defined in \eqref{Ires_bd:eqn}, and bounded in \eqref{T0_bd:eqn}.
Using these bounds in \eqref{I_low_bd:eqn}, 
\[
| \# \cI_{low} - (2 \pi)^{-d} \mu_d(S(r))| \lesssim \max \{ r^{d-1} \log(r/\delta)^{5/2}, \log(r/\delta)^{(5/2)d} \}.
\]
This completes the proof of the lemma.
\end{proof}

From Lemmas \ref{Ehi:lem} and \ref{Elow:lem}, $\cE \leq \cE_{hi} + \cE_{low} \leq C_d \delta r^d$. From Lemmas \ref{count1:lem} and \ref{count2:lem},
\[
\begin{aligned}
&\# \cI_{res} \leq C_d \max \{ r^{d-1} \log(r/\delta)^{5/2}, \log(r/\delta)^{(5/2)d} \} \\
& | \# \cI_{low} - (2 \pi)^{-d} \mu_d(S(r))| \leq C_d  \max \{ r^{d-1} \log(r/\delta)^{5/2}, \log(r/\delta)^{(5/2)d} \}.
\end{aligned}
\]
This completes the proof of Proposition \ref{main:prop}

\section{Proofs of the Main Results}\label{sec:proof-of-main-thm-1}

\subsection{Proof of Theorem \ref{mainthm:cube_convex}}
\label{proof-of-mainthm}

Fix a convex set $S \subset \R^d$ that is coordinate-wise symmetric. Let $Q = [ 0,1]^d$. Let $\epsilon \in (0,1/2)$ and $r \geq 1$. Set $S(r) = \{ r x : x \in S \}$. Consider the operator $\cT_r = P_Q B_{S(r)} P_Q$ on $L^2(\R^d)$. Let $\lambda_k(\cT_r)$ denote the nonzero eigenvalues of $\cT_r$ ($k \geq 0$). These are identical to the eigenvalues  $\lambda_k(Q,S(r))$ for the operator $\widetilde{T}_{Q,S(r)} = B_{S(r)} P_Q B_{S(r)}$; see Remark \ref{remk:SSLO_alt}.

Let $C_d$, $\overline{C}_d \geq 1$ be as in Proposition \ref{main:prop}. Define $\delta = \frac{\epsilon^2}{\overline{C}_d r^d} \in (0,1/4]$. We apply Proposition \ref{main:prop} to produce a partition $\cI = \cI_{low} \cup \cI_{res} \cup \cI_{hi}$ of the index set for the basis $\cB(Q) = \{ \psi_\nu\}_{\nu \in \cI}$, such that
\begin{equation}\label{E_loss:eqn}
\sum_{\nu \in \cI_{low}} \| (I-\cT_r) \psi_\nu \|_{2}^2 + \sum_{\nu \in \cI_{hi}} \| \cT_r \psi_\nu \|_{2}^2 \leq \overline{C}_d \delta r^d  = \epsilon^2
\end{equation}
and 
\[
\begin{aligned}
\# (\cI_{res}) &\leq C_d \max \{ r^{d-1} \log(r/\delta)^{5/2}, (\log(r/\delta))^{(5/2)d} \} \\
| \# (\cI_{low}) - (2 \pi)^{-d} \mu_d(S(r))| &\leq C_d  \max \{ r^{d-1} \log(r/\delta)^{5/2}, (\log(r/\delta))^{(5/2)d} \}.
\end{aligned}
\]
Note that 
\[
\log(r/\delta) = \log( \overline{C}_d r^{d+1}/\epsilon^2)) \leq \log(\overline{C}_d) + (d+1) \log(r/\epsilon) \leq C_d \log(r/\epsilon).
\]
Therefore, 
\[
\begin{aligned}
\# (\cI_{res}) &\leq C_d \max \{ r^{d-1} \log(r/\epsilon)^{5/2}, (\log(r/\epsilon))^{(5/2)d} \} \\
| \# (\cI_{low)} - (2 \pi)^{-d} \mu_d(S(r))| &\leq C_d  \max \{ r^{d-1} \log(r/\epsilon)^{5/2}, (\log(r/\epsilon))^{(5/2)d} \}.
\end{aligned}
\]
Owing to \eqref{E_loss:eqn}, we can apply Lemma \ref{func_anal:lem}. We learn that, for any $\epsilon \in (0,1/2)$,
\[
\begin{aligned}
|\# \{ k : \lambda_k(\mathcal{T}_r) > \epsilon \} - \#(\cI_{low})| &\leq \#(\cI_{res}) \\
\# \{ k : \lambda_k(\mathcal{T}_r) \in( \epsilon,1-\epsilon) \} &\leq \#(\cI_{res}).
\end{aligned}
\]
Applying the preceding bounds on $\#(\cI_{low})$ and $\#(\cI_{res})$, we obtain the bounds \eqref{eig_clust:eqn1} and \eqref{eig_clust:eqn2}  in Theorem \ref{mainthm:cube_convex}.

  \begin{remark}\label{imp:detailed rem} 
  Following our discussion in Remark \ref{imp:rem}, 
  to improve the bounds \eqref{eig_clust:eqn1}--\eqref{eig_clust:eqn3} in Theorem \ref{mainthm:cube_convex},  we replace the cutoff function $v(x)$ in \eqref{c0} by  $v_m(x) := e^{-(1-x^2)^{-m}}$ for $x\in (-1,1)$ and zero elsewhere, which belongs to the Gevrey class $G^{1+\frac{1}{m}}$ introduced in Section \ref{sec:Gevrey}. One can check that the functions $\psi$ and $s$ constructed in \eqref{cutoff-function}, \eqref{s_theta_defn} are then in $G^{1+\frac{1}{m}}$. For such $s$, the Fourier decay condition \eqref{uniformbd1_a} for the local sine basis  is satisfied with exponent $\frac{2}{3}$ replaced by $\frac{m}{m+1}$. The proof of this improved Fourier decay condition requires a variant of Proposition \ref{prop:Gevrey_PW} for the Gevrey class $G^\delta$ with $\delta = 1+\frac{1}{m} >1$; for instance, one can apply Proposition 2.8 of \cite{DH} to obtain this bound. By instead fixing the parameter $\eta \sim \ln(1/\delta)^{\frac{m+1}{m}}$ in \eqref{eta_choice:eqn}, one can adapt the arguments of Section \ref{mtr:sec} and derive the improved bounds in \eqref{eig_clust:eqn1}--\eqref{eig_clust:eqn3}.
  \end{remark}

\subsection{Proof of Theorem \ref{tensor:propA}}
\label{proof-of-propA}

Recall, given domains $Q,S \subset \R^d$, the operator $\mathcal{T}_{Q,S} : L^2(\R^d) \rightarrow L^2(\R^d)$ is defined by $\mathcal{T}_{Q,S} = P_Q B_S P_Q$. In particular, if $I, J \subset \R$ are intervals then $\cT_{I,J} : L^2(\R) \rightarrow L^2(\R)$. We establish the following
\begin{proposition}\label{tensor:prop1}
Let $Q = I^d$ and $S = J^d$ be cubes in $\R^d$. Let $ \{ \psi_l\}_{l \geq 0} $ be an orthonormal basis for $L^2(I)$ consisting of the eigenfunctions of the operator $\cT_{I,J}$, and let $\{\lambda_l \}_{l \geq 0}$ be the sequence of associated positive eigenvalues, written in descending order.
Then the set
$$\left\{\Psi_{\vec{\ell}}(x) =  \prod_{k=1}^d \psi_{\ell_k}(x_k) : \vec{\ell} = (\ell_1,\cdots,\ell_d) \in \N^d\right\}
$$
is an orthonormal basis for $L^2(Q)$,  consisting of the eigenfunctions of $\cT_{Q,S}$, with associated eigenvalues
$$
\left\{ \lambda_{\vec{\ell}} =  \prod_{k=1}^d \lambda_{\ell_k} : \vec{\ell} = (\ell_1,\cdots,\ell_d) \in \N^d\right\}.
$$
\end{proposition}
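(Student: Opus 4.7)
The plan is to exploit the tensor-product structure of the spatial and frequency domains. Under the canonical isometric isomorphism $L^2(\R^d) \cong L^2(\R)^{\otimes d}$, the key observation is that both projection operators factor as $d$-fold tensor products when the domains are Cartesian products. First I would verify that $P_Q = P_I^{\otimes d}$, which is immediate from $\chi_{I^d}(x_1,\ldots,x_d) = \prod_{k=1}^d \chi_I(x_k)$ acting on simple tensors $f_1 \otimes \cdots \otimes f_d$. Next I would verify $B_S = B_J^{\otimes d}$: this uses that the $d$-dimensional Fourier transform factors as $\mathcal{F} = \mathcal{F}_1^{\otimes d}$ (where $\mathcal{F}_1$ is the one-dimensional Fourier transform), together with $\chi_{J^d}(\xi_1,\ldots,\xi_d) = \prod_{k=1}^d \chi_J(\xi_k)$. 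Combining these, $\cT_{Q,S} = P_Q B_S P_Q = (P_I B_J P_I)^{\otimes d} = \cT_{I,J}^{\otimes d}$ as operators on $L^2(\R)^{\otimes d}$.

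Given this factorization, the eigenstructure of the tensor product operator is standard: if $\cT_{I,J} \psi_\ell = \lambda_\ell \psi_\ell$ for each $\ell \geq 0$, then on simple tensors one computes
\[
\cT_{Q,S}(\psi_{\ell_1} \otimes \cdots \otimes \psi_{\ell_d}) = (\cT_{I,J} \psi_{\ell_1}) \otimes \cdots \otimes (\cT_{I,J} \psi_{\ell_d}) = \Bigl(\prod_{k=1}^d \lambda_{\ell_k}\Bigr) \psi_{\ell_1} \otimes \cdots \otimes \psi_{\ell_d}.
\]
Identifying the simple tensor $\psi_{\ell_1} \otimes \cdots \otimes \psi_{\ell_d}$ with the function $\Psi_{\vec{\ell}}(x) = \prod_k \psi_{\ell_k}(x_k)$ in $L^2(\R^d)$, this yields $\cT_{Q,S} \Psi_{\vec{\ell}} = \lambda_{\vec{\ell}} \Psi_{\vec{\ell}}$ with $\lambda_{\vec{\ell}} = \prod_k \lambda_{\ell_k}$.

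To conclude that $\{\Psi_{\vec{\ell}}\}$ is an orthonormal basis for $L^2(Q)$, I would invoke the standard fact that if $\{\psi_\ell\}_{\ell \geq 0}$ is an orthonormal basis for $L^2(I)$, then the family of $d$-fold tensor products is an orthonormal basis for $L^2(I)^{\otimes d} \cong L^2(I^d) = L^2(Q)$. (The only subtlety is that $\{\psi_\ell\}$, as eigenfunctions of the compact self-adjoint operator $\cT_{I,J}$ restricted to the positive-eigenvalue subspace, form an orthonormal basis for $\overline{\mathrm{range}(\cT_{I,J})} \subset L^2(\R)$; since $\cT_{I,J} = P_I B_J P_I$ vanishes outside $I$ and the $\psi_\ell$ are supported in $I$, they can be regarded as an orthonormal basis for $L^2(I)$ via Remark \ref{remk:SSLO_alt}.)

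None of the steps present a genuine obstacle, the result being essentially a bookkeeping exercise about tensor products of compact self-adjoint operators. The only point requiring mild care is to confirm that the eigenvalue sequence $\{\lambda_{\vec{\ell}}\}_{\vec{\ell} \in \N^d}$ enumerates all the nonzero eigenvalues of $\cT_{Q,S}$ with correct multiplicity, which follows from completeness of $\{\psi_\ell\}$ in $L^2(I) = \overline{\mathrm{range}(\cT_{I,J})}$ combined with the fact that a product $\prod_k \lambda_{\ell_k}$ is positive iff every factor is positive.
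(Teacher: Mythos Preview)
Your proof is correct and follows essentially the same tensor-product strategy as the paper. The only difference is presentational: you factor the operators abstractly as $\cT_{Q,S} = (P_I B_J P_I)^{\otimes d}$ via $P_Q = P_I^{\otimes d}$ and $B_S = B_J^{\otimes d}$, whereas the paper verifies the eigenfunction identity by writing out the explicit integral kernel $K_{Q,S}(x,y) = \prod_{k=1}^d K_{I,J}(x_k,y_k)$ (a product of one-dimensional sinc kernels) and integrating coordinate-by-coordinate---but this kernel factorization is precisely the concrete expression of your operator factorization, so the two arguments are equivalent.
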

\begin{proof}
If $\cB$ is an orthonormal basis for a Hilbert space $\mathcal{H}$, then $\cB^{\otimes d} := \{ \otimes_{j=1}^d \psi_j : \psi_j \in \cH\}$ is an orthonormal basis for $\overline{\bigotimes_{k=1}^d \mathcal{H}}$. Note that $L^2(Q) =\overline{ \bigotimes_{k=1}^d L^2(I)}$. Therefore,  $\{ \Psi_{\vec{\ell}} \}$ is an orthonormal basis for $L^2(Q)$.

It remains to verify that $\cT_{Q,S} \Psi_{\vec{\ell}} = \lambda_{\vec{\ell}} \Psi_{\vec{\ell}} $.  
Recall that $\cT_{Q,S} = P_QB_SP_Q$ as introduced in Section \ref{sec:prelim}.  
We have the kernel representation for $\cT_{Q,S}$:
\[
\cT_{Q,S} \Psi(x) = \int_{\R^d} K_{Q,S}(x,y) \Psi(y) dy \qquad ( \Psi \in L^2(\R^d)).
\] 
with 
\[
K_{Q,S}(x,y) = \chi_Q(x) \chi_S(y) \pi^{-d} \prod_{k=1}^d \frac{\sin (W(x_k-y_k))}{x_k - y_k},
\]
where the term $\pi^{-d}$ times the product of sinc functions is just the inverse fourier transform of $\chi_{[-W,W]^d}(\xi)$. Given that $Q = I^d$ we have $\chi_Q(x) = \prod_k \chi_I(x_k)$, and similarly for $\chi_S(y) = \prod_k \chi_J(y_k)$ so
\[
K_{Q,S}(x,y) = \prod_{k=1}^d K_{I,J}(x_k,y_k),
\]
with $K_{I,J}$ defined as before. Hence, using the definition of $\Psi_{\vec{\ell}}$, and the eigenproperty of $\psi_{\ell_k}$,
\[
\cT_{Q,S} \Psi_{\vec{\ell}}(x) = \prod_{k=1}^d \int_{\R} K_{I,J}(x_k,y_k) \psi_{\ell_k}(y_k) dy = \prod_{k=1}^d \lambda_{\ell_k} \psi_{\ell_k}(x_k)
\]
So, $\cT_{Q,S} \Psi_{\vec{\ell}} = \lambda_{\vec{\ell}} \Psi_{\vec{\ell}}$, as claimed.

\end{proof}

Combining Theorem \ref{Karnik}   with Landau's result \eqref{landau-half}, we obtain the following. 

\begin{proposition}\label{1d:prop}
Let $W \geq 2 \pi$. Let $\lambda_k = \lambda_k(W)$, $k \geq 0$, be the positive eigenvalues of the operator $\cT_{I,J} : L^2(\R) \rightarrow L^2(\R)$, where $I=[0,1]$ and $J=[-W,W]$. Then, for any $\gamma \in (0,1)$,
\[
|\# \{ k : \lambda_k > \gamma \} - W/\pi| \leq (2 / \pi^2) \log( 50W/\pi + 25) \log(5/(\gamma(1-\gamma))) + 9.
\]
\end{proposition}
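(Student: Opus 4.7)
The plan is to combine Landau's half-spectrum inequality \eqref{landau-half} with the Karnik--Romberg--Davenport transition bound in Theorem \ref{Karnik}, applied at the level $\epsilon = \min(\gamma, 1-\gamma)$, and then absorb the gap between $N := \lfloor W/\pi \rfloor$ and $W/\pi$ into the additive constant. Throughout, I will write $M(\gamma) := \#\{k : \lambda_k(W) > \gamma\}$ and
\[
R(\gamma) := \tfrac{2}{\pi^2}\log(50W/\pi + 25) \log(5/(\gamma(1-\gamma))),
\]
noting the symmetry $R(1-\gamma) = R(\gamma)$ and the trivial estimate $|N - W/\pi| \leq 1$. The goal reduces to showing $|M(\gamma) - N| \leq R(\gamma) + 8$ for every $\gamma \in (0, 1)$.

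The first step is to anchor $M$ at $\gamma = 1/2$: since $\lambda_{N-1} \geq 1/2 \geq \lambda_{N+1}$ and the eigenvalues are monotonically non-increasing, one reads off $\#\{k : \lambda_k \geq 1/2\} \geq N$ and $\#\{k : \lambda_k > 1/2\} \leq N + 1$. The second step treats $\gamma \in (0, 1/2)$ via the decomposition
\[
M(\gamma) = \#\{k : \lambda_k > 1/2\} + \#\{k : \gamma < \lambda_k \leq 1/2\}.
\]
The first summand is at most $N+1$ by the anchoring step; since $1/2 < 1-\gamma$, the interval $(\gamma, 1/2]$ is contained in $(\gamma, 1-\gamma)$, so Theorem \ref{Karnik} bounds the second summand by $R(\gamma) + 7$. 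Together with the immediate lower bound $M(\gamma) \geq \#\{k : \lambda_k \geq 1/2\} \geq N$, this produces $N \leq M(\gamma) \leq N + R(\gamma) + 8$.

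The third step uses the symmetry $R(1-\gamma) = R(\gamma)$ to handle $\gamma \in [1/2, 1)$. The upper bound $M(\gamma) \leq M(1-\gamma) \leq N + R(\gamma) + 8$ follows from the previous step applied to $1-\gamma$. For the matching lower bound, write
\[
M(\gamma) = M(1-\gamma) - \#\{k : 1-\gamma < \lambda_k \leq \gamma\},
\]
split the subtracted quantity as $\#\{k : \lambda_k \in (1-\gamma, \gamma)\} + \#\{k : \lambda_k = \gamma\}$, and bound the first piece by $R(\gamma) + 7$ via Theorem \ref{Karnik} and the second piece by $1$ by simplicity of the PSWF spectrum. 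This delivers $M(\gamma) \geq N - R(\gamma) - 8$. Combining the two regimes and applying the triangle inequality with $|N - W/\pi| \leq 1$ finishes the proof.

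The only ingredient not already recorded in Section \ref{subsec:1dresults} is the boundary correction $\#\{k : \lambda_k = \gamma\} \leq 1$ in the $\gamma > 1/2$ case, and it is the only step where a potential obstacle could arise. If one prefers not to invoke simplicity as a black box, an alternative is to replace $\gamma$ by $\gamma' \in (\gamma, 1)$ chosen outside the spectrum and exploit the right-continuity of $\gamma \mapsto M(\gamma)$ together with continuity of $R$ to pass to the limit $\gamma' \to \gamma^+$; this yields the same bound without any assumption on the multiplicity structure.
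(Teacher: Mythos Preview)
Your proof is correct and follows essentially the same route as the paper: both arguments split on $\gamma \lessgtr 1/2$, anchor $M(\gamma)$ near $N=\lfloor W/\pi\rfloor$ via Landau's inequality \eqref{landau-half}, and control the remaining discrepancy by Theorem~\ref{Karnik} applied with $\epsilon=\min(\gamma,1-\gamma)$. The paper handles the boundary term by citing the strict monotonicity of the sequence $\lambda_k(W)$ (attributed to \cite{Karnik21}), which is exactly the simplicity fact you isolate; your limiting alternative via right-continuity of $M$ is a nice way to make the argument self-contained.
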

\begin{proof}
We split the proof into cases depending on whether $\gamma \leq 1/2$ or $\gamma  >  1/2$.

{\bf Case 1:} 
Let $\gamma \leq  1/2$. Then write
\[
\# \{ k : \lambda_k > \gamma \} = \# \{ k : \lambda_k \in [1/2,1) \} +  \# \{ k : \lambda_k \in (\gamma,1/2)\}.
\]
The first term is between $N$ and $N+2$, with $N = \lfloor W/\pi \rfloor$, by Landau's result  \eqref{landau-half}, and the fact that the eigenvalues $\lambda_k = \lambda_k(W)$ are strictly decreasing (see \cite{Karnik21}). Therefore, the first term is between $W/\pi -1$ and $W/\pi + 2$. The second term is at most $\# \{ k : \lambda_k \in (\gamma, 1 - \gamma) \} \leq (2 / \pi^2) \log( 50W/\pi + 25) \log(5/(\gamma(1-\gamma))) + 7$,  due to Theorem \ref{Karnik}. Therefore,
\[
|\# \{ k : \lambda_k > \gamma \} - W/\pi| \leq  (2 / \pi^2) \log( 50W/\pi + 25) \log(5/(\gamma(1-\gamma))) + 9.
\]
 
 {\bf Case 2:} 
Let $\gamma  >  1/2$. Then the analysis is similar to Case 1, except one uses
\[
\# \{ k : \lambda_k > \gamma \} = \# \{ k : \lambda_k \in [1/2,1) \} -  \# \{ k : \lambda_k \in [1/2, \gamma] \},
\]
and then applies \eqref{landau-half} and Theorem  \ref{Karnik} with $\epsilon = 1-\gamma \in (0,1/2)$. We spare the details. This completes the proof of Proposition \ref{1d:prop}.
\end{proof}

Next we give the proof of Theorem \ref{tensor:propA}.

Let $Q = [0,1]^d$ and $S = [-W,W]^d$, with $W\geq2 \pi$. By Proposition \ref{tensor:prop1}, we parameterize the sequence of positive eigenvalues of $\cT_{Q,S}$ by $\lambda_{\vec{\ell}} := \prod_{k=1}^d \lambda_{\ell_k}$ for $\vec{\ell} = (\ell_1,\cdots,\ell_d) \in \N^d$, where $(\lambda_\ell)_{\ell \in \N}$ is the sequence of positive eigenvalues of the one-dimensional operator $\cT_{[0,1],[-W,W]} : L^2(\R) \rightarrow L^2(\R)$, with $\lambda_\ell \rightarrow 0$ as $\ell \rightarrow \infty$.

It remains to estimate
\[
\begin{aligned}
& M_\epsilon(Q,S) := \# \left\{ (\ell_1,\cdots,\ell_d) \in \N^d : \prod_{k=1}^d \lambda_{\ell_k}  > \epsilon \right\}, \epsilon \in (0,1), \\
&N_\epsilon(Q,S) := \# \left\{ (\ell_1,\cdots,\ell_d) \in \N^d : \prod_{k=1}^d \lambda_{\ell_k}  \in (\epsilon, 1-\epsilon) \right\}, \epsilon \in (0,1/2).
\end{aligned}
\]
For $\epsilon \in (0,1/2)$, $N_\epsilon(Q,S) \leq M_\epsilon(Q,S) - M_{1-\epsilon}(Q,S)$. So, it will suffice to estimate $M_\epsilon(Q,S)$ for $\epsilon \in (0,1)$.

Given $\lambda_1,\cdots \lambda_d \in [0,1]$ and $\epsilon \in (0,1)$, if $\prod_{k=1
}^d\lambda_k > \epsilon$ then $\lambda_k > \epsilon$ for all $k$.  Conversely, if $\lambda_k > \epsilon^{1/d}$ for all $k$, then $\prod_{k=1
}^d \lambda_k > \epsilon$. Therefore,
\[
\left \{ \ell: \lambda_\ell > \epsilon^{1/d} \right\}^d \subset \left\{ (\ell_1,\cdots,\ell_d) : \prod_{k=1}^d \lambda_{\ell_k}  > \epsilon \right\} \subset \left\{ \ell : \lambda_\ell > \epsilon \right\}^d.
\]
So,
\begin{equation}\label{tensor_bd:eqn}
( \# \{ \ell : \lambda_\ell > \epsilon^{1/d}\} )^d \leq M_\epsilon(Q,S) \leq ( \# \{ \ell : \lambda_\ell > \epsilon\})^d.
\end{equation}

By Proposition \ref{1d:prop}, 
\[
\begin{aligned}
 W/\pi - C \log(W) \log(1/(\epsilon(1 -\epsilon))) &\leq  \# \{ \ell : \lambda_\ell > \epsilon \} \\
&\leq W /\pi + C \log(W) \log(1/(\epsilon(1-\epsilon))).
\end{aligned}
\]
We take both sides to the power $d$, and apply the Binomial Theorem. Letting $\chi(\epsilon) = \log(1/(\epsilon(1-\epsilon)))$,
\[
\begin{aligned}
& (W/\pi)^d - \sum_{j=0}^{d-1} \binom{d}{j} (W/\pi)^j (C \log(W) \chi(\epsilon))^{d-j} \\
&\leq (\# \{ \ell : \lambda_\ell > \epsilon \})^d \leq (W/\pi)^d + \sum_{j=0}^{d-1} \binom{d}{j} (W/\pi)^j (C \log(W) \chi(\epsilon))^{d-j}.
\end{aligned}
\]
Note $\binom{d}{j} \leq 2^d$, so the above series is bounded by $(2C)^d \sum_{j=0}^{d-1} W^j (\log(W) \chi(\epsilon))^{d-j}$. This series has monotone terms, so its value is at most $d$ times the maximum of the first and last terms. Thus, 
\[
\left| \# (\{ \ell : \lambda_\ell > \epsilon \})^d - (W/\pi)^d \right| \lesssim \max \left\{ W^{d-1} \log(W) \chi(\epsilon), \left(\log(W)  \chi(\epsilon)\right)^d \right\}.
\]
We apply the previous inequality with $\epsilon$ replaced by $\epsilon^{1/d}$, to get:
\[
\left| \# (\{ \ell : \lambda_\ell > \epsilon^{1/d} \})^d - (W/\pi)^d \right| \lesssim \max \{ W^{d-1} \log(W) \chi(\epsilon^{1/d}) ), (\log(W) \chi(\epsilon^{1/d}))^d \}.
\]
Note that $\epsilon^{1/d} \leq 1 + \frac{1}{d}(\epsilon - 1)$ for $\epsilon \in (0,1)$.\footnote{The function $t \mapsto t^{1/d}$ is concave on $(0,\infty)$, and its first order taylor polynomial at $t=1$ is $1 + \frac{1}{d}(t-1)$; thus, by concavity, $t^{1/d} \leq 1 + \frac{1}{d} (t-1)$ for $t>0$.} Thus, $1- \epsilon^{1/d} \geq \frac{1}{d} (1-\epsilon)$, so
\[
\begin{aligned}
\chi(\epsilon^{1/d}) &= \log(1/(\epsilon^{1/d} (1-\epsilon^{1/d}))) = \log(1/\epsilon^{1/d}) + \log(1/(1-\epsilon^{1/d})) \\
&\leq \log(1/\epsilon) + \log(d/(1-\epsilon)) = \log(d/(\epsilon(1-\epsilon))) \lesssim  \chi(\epsilon).
\end{aligned}
\]
Using the previous inequalities in \eqref{tensor_bd:eqn}, for any $\epsilon \in (0,1)$,
\begin{equation}\label{tensor_bd2:eqn}
\begin{aligned}
|M_\epsilon(Q,S) - (W/\pi)^d|  \lesssim \max \{ W^{d-1} \log(W) \chi(\epsilon) , (\log(W) \chi(\epsilon))^d \}.
\end{aligned}
\end{equation}
When $\epsilon \in (0,1/2)$, we have $1/(1-\epsilon) \leq 2$, so $\chi(\epsilon) = \log(1/(\epsilon(1-\epsilon))) \leq \log(2/\epsilon) \lesssim \log(1/\epsilon)$, so, \eqref{tensor_bd2:eqn} implies the first line of \eqref{M_eps1:eqn}.

Next, we give the bound on $N_\epsilon(Q,S)$. For $\epsilon \in (0,1/2)$,
\[
\begin{aligned}
N_\epsilon(Q,S) &\leq M_\epsilon(Q,S) - M_{1-\epsilon}(Q,S) \\
& \leq | M_\epsilon(Q,S) - (W/\pi)^d| + |M_{1-\epsilon}(Q,S) - (W/\pi)^d|.
\end{aligned}
\]
Now, apply \eqref{tensor_bd2:eqn} (note the RHS is unchanged if we replace $\epsilon$ by $1 - \epsilon$), to give
\[
N_\epsilon(Q,S) \lesssim \max \{ W^{d-1} \log(W) \chi(\epsilon), (\log(W) \chi(\epsilon))^d \}.
\]
As before, if $\epsilon \in (0,1/2)$, we have $\chi(\epsilon) \lesssim \log(1/\epsilon)$, so the above inequality implies the second line of \eqref{M_eps1:eqn}.

This completes the proof of Theorem \ref{tensor:propA}.

\subsection{Proof of Corollary \ref{eig_decay:cor} }\label{decay-rate:eign}

% We denote by $\lambda_k(Q,S)$ the eigenvalues of the operator $\cT_{Q,S} =  P_Q B_Q P_Q$, and by 
%$\lambda_k(S,Q)$  the the eigenvalues of the operator $\cT_{S,Q} =  B_S P_Q B_S$.  {\color{red} Whoops! This notation is problematic -- $\cT_{Q,S}$ and $\cT_{S,Q}$ use space/frequency projections in different orders. In particular, the range of $\cT_{Q,S}$ is spacelimited functions and the range of $\cT_{S,Q}$ is bandlimited functions. Need $\cT_{X,Y}$ to mean a fixed thing for the entire paper. Which order to use?}
%{\color{blue} shall we use $\widetilde{\cT}$ for order BPB?}

 Given measurable sets $Q,S$ in $\R^d$ with positive and finite $d$-dimensional Lebesgue measure, let $\{\lambda_k(Q,S)\}_{k \geq 0}$ be the sequence of nonzero eigenvalues of the compact operator $\cT_{Q,S} = P_Q B_S P_Q$ on $L^2(\R^d)$, counted with multiplicity, and arranged in non-increasing order.
 
The following observations are due to Landau; see Lemma 1 of \cite{Landau67}. 

\begin{lemma}\label{observation:lem}
For all $k \geq 0$,
\begin{align}
&\lambda_k(Q,S) = \lambda_k(S,Q),  \label{eqn:eval-sym}\\
&\lambda_k(Q,S_1) \leq \lambda_k(Q,S_2), \quad \mbox{ if } S_1 \subset S_2, \label{eqn:eval-mon1} \\
&\lambda_k(Q_1,S) \leq \lambda_k(Q_2,S), \quad \mbox{ if } Q_1 \subset Q_2.  \label{eqn:eval-mon2}
\end{align}
\end{lemma}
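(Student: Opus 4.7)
The plan is to handle the three parts separately. Parts \eqref{eqn:eval-mon1} and \eqref{eqn:eval-mon2} will follow at once from the min-max characterization \eqref{MinMaxChar} once the eigenvalues are written via positive quadratic forms. For \eqref{eqn:eval-mon1}, I would start from the factorization $\cT_{Q,S} = (B_S P_Q)^*(B_S P_Q)$, giving $\langle \cT_{Q,S}\psi, \psi\rangle = \|B_S P_Q \psi\|_2^2$. If $S_1 \subset S_2$ then $B_{S_1} = B_{S_1}B_{S_2}$, and since $B_{S_1}$ is an orthogonal projection (hence a contraction), $\|B_{S_1}P_Q\psi\|_2 \leq \|B_{S_2}P_Q\psi\|_2$ for every $\psi \in L^2(\R^d)$. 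Thus $\cT_{Q,S_1} \preceq \cT_{Q,S_2}$ as positive forms, and \eqref{MinMaxChar} yields $\lambda_k(Q,S_1) \leq \lambda_k(Q,S_2)$. For \eqref{eqn:eval-mon2}, I would apply the same argument to $\widetilde{\cT}_{Q,S} = B_S P_Q B_S$, which shares the positive spectrum of $\cT_{Q,S}$ by Remark \ref{remk:SSLO_alt}: here $\langle \widetilde{\cT}_{Q,S}\psi, \psi\rangle = \int_Q |B_S\psi(x)|^2\,dx$ is manifestly monotone under inclusion of $Q$.

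For the symmetry \eqref{eqn:eval-sym}, I would exploit duality through the restricted Fourier transform $V : L^2(Q) \to L^2(S)$ defined by $(Vf)(\xi) := \chi_S(\xi)\,\widehat{f}(\xi)$, where each $f \in L^2(Q)$ is viewed as an $L^2(\R^d)$ function supported in $Q$. A direct Parseval computation gives $V^*g = (2\pi)^d \chi_Q\,\mathcal{F}^{-1}[\chi_S g]$, and hence $V^*V = (2\pi)^d\,\cT_{Q,S}\big|_{L^2(Q)}$. The companion operator $VV^*$ on $L^2(S)$ is the integral operator with kernel $\chi_S(\xi)\chi_S(\eta)\,\widehat{\chi_Q}(\xi - \eta)$, while $(2\pi)^d\,\cT_{S,Q}\big|_{L^2(S)}$ has kernel $\chi_S(\xi)\chi_S(\eta)\,\widehat{\chi_Q}(\eta - \xi)$. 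Since $\chi_Q$ is real valued, $\widehat{\chi_Q}(\eta-\xi) = \overline{\widehat{\chi_Q}(\xi-\eta)}$, so the two kernels are complex conjugates of each other. A positive self-adjoint operator and its complex conjugate have identical (real) eigenvalue sequences, because eigenfunctions pass through conjugation while real eigenvalues are preserved, so $VV^*$ and $(2\pi)^d\cT_{S,Q}$ share spectra. Combined with the standard identity that $V^*V$ and $VV^*$ have identical nonzero spectra, this gives $\lambda_k(Q,S) = \lambda_k(S,Q)$.

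The main obstacle is keeping the Fourier sign and normalization conventions consistent in the proof of \eqref{eqn:eval-sym}; once the kernel identities are lined up, the reduction to complex conjugation (which is harmless because $\chi_Q$ is real valued) closes the argument. The monotonicity parts, by contrast, are essentially immediate from the form-level inequalities and the Courant--Fischer principle.
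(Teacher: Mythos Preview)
Your proof is correct. The paper does not supply its own argument for this lemma; it simply attributes the result to Landau and cites Lemma~1 of \cite{Landau67}. You have filled in the details that the paper omits.

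Your treatment of the monotonicity statements \eqref{eqn:eval-mon1} and \eqref{eqn:eval-mon2} via the quadratic-form inequalities $\|B_{S_1}P_Q\psi\|_2 \le \|B_{S_2}P_Q\psi\|_2$ and $\|P_{Q_1}B_S\psi\|_2 \le \|P_{Q_2}B_S\psi\|_2$, combined with Courant--Fischer, is exactly the standard route and matches Landau's original reasoning. For the symmetry \eqref{eqn:eval-sym}, your argument through the restricted Fourier map $V:L^2(Q)\to L^2(S)$ and the identification of $VV^*$ with the complex conjugate of $(2\pi)^d\,\cT_{S,Q}$ is also correct; the conjugation step is harmless precisely because the eigenvalues are real. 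One could shorten this slightly by noting that conjugation by the Fourier transform carries $P_Q$ to $B_Q$ and $B_S$ to $P_S$ (up to the $(2\pi)^d$ normalization and a reflection), so $\cT_{Q,S}$ and $\cT_{S,Q}$ are unitarily equivalent after composing with a reflection---but your kernel computation achieves the same conclusion and is perfectly valid.
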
 
  
%\begin{proof} 
%For the proof of \eqref{eqn:eval-sym} and  \eqref{eqn:eval-mon1} see Lemma 1 of \cite{Landau67}. Clearly, \eqref{eqn:eval-mon2} is a consequence of  \eqref{eqn:eval-sym} and  \eqref{eqn:eval-mon1}. 
%follows from the similarity of the operators   $\widetilde{\cT}_{Q,S}$  and $\widetilde{\cT}_{S,Q}$.  Indeed: xxx.. this proves the claim. 

  %  \footnote{For the proof of \eqref{eqn:eval-sym}, note that $\cT_{Q,S} = P_Q B_S P_Q = L_{Q,S}^* L_{Q,S}$, where $L_{Q,S} = B_S P_Q$. Also, $\cT_{S,Q} = L_{Q,S} L_{Q,S}^*$. By well-known results, the positive eigenvalues of $L L^*$ and $L^* L$ are equal for any operator $L$ on a Hilbert space (both are equal to the squared singular values of $L$). \\
    
  %  Note the bandlimiting operators satisfy that $B_{S_2} - B_{S_1} = B_{S_2 \setminus S_1}$ if $S_1 \subset S_2$. Then $\cT_{Q,S_2} - \cT_{Q,S_1} = P_Q B_{S_2} P_Q - P_Q B_{S_1} P_Q = P_Q B_{S_2 \setminus S_1} P_Q = \cT_{Q, S_2 \setminus S_1}$ is a positive semidefinite operator.  Thus, by the variational (max-min) characterization of eigenvalues, one has $\lambda_k(Q,S_1) \leq  \lambda_k(Q,S_2)$, proving \eqref{eqn:eval-mon1}. Finally, \eqref{eqn:eval-mon2} follows from \eqref{eqn:eval-sym}, \eqref{eqn:eval-mon1}. 
%  \end{proof}

Now, we are ready to prove Corollary \ref{eig_decay:cor}. 

\begin{proof}[Proof of Corollary \ref{eig_decay:cor}]

Let $\Delta = \diam_\infty(Q) \diam_\infty(S) < \infty$. Rescalings $(Q,S) \mapsto (\kappa^{-1} Q, \kappa S)$  preserve the eigenvalues of SSLOs. Similarly, translations of the domains $Q$ and $S$ preserve the eigenvalues of SSLOs. So, without loss of generality, $Q \subset [0,1]^d$ and $S \subset [-\Delta,\Delta]^d$. We may also assume  that $\Delta > 1$. By Lemma \ref{observation:lem},
\[
\lambda_k(Q,S) \leq \lambda_k([0,1]^d,[-\Delta,\Delta]^d).
\]
Therefore, to prove the result, it suffices to show that
\[
\lambda_k(\Delta) := \lambda_k([0,1]^d,[-\Delta,\Delta]^d) \leq C_d \exp( - c(\Delta) k^{1/d}), 
\]
for constants $C_d > 0$ (determined by $d$) and $c(\Delta) > 0$ (determined by $d$ and $\Delta$).

In \eqref{M_eps1:eqn} of Theorem \ref{tensor:propA}, we showed that, for any $\epsilon \in (0,1/2)$,
\[
\# \{ k: \lambda_k(\Delta) > \epsilon \} \leq (\Delta/\pi)^d + C_d \max\{ \Delta^{d-1} \log(\Delta) \log(1/\epsilon), (\log(\Delta) \log(1/\epsilon))^d \}.
\]
Since $\log(1/\epsilon)^d \geq \log(1/\epsilon) \geq 1$ for $\epsilon < 1/2$, and  $0 < \log(\Delta) \leq \Delta$ for $\Delta > 1$,
\begin{equation}
\label{eig_count1:eqn}
\# \{ k: \lambda_k(\Delta) > \epsilon \} \leq C_d \Delta^d  \log(1/\epsilon)^d \quad \mbox{for }  \epsilon \in (0,1/2).
\end{equation}

Note that $\# \{ k  \geq 0: \lambda_k(\Delta) > \epsilon \} \leq T \iff \lambda_{T}(\Delta) \leq \epsilon$ for any integer $T > 0$. 

Now, for an arbitrary integer $K >  C_d \Delta^d \log(2)^d$, choose $\epsilon \in (0,1/2)$ such that  $C_d \Delta^d \log(1/\epsilon)^d  \leq K < C_d \Delta^d \log(1/\epsilon)^d + 1$. By \eqref{eig_count1:eqn}, $\# \{ k : \lambda_k(\Delta) > \epsilon \} \leq K$, and so, $\lambda_{K}(\Delta) \leq \epsilon$. Note that $K \leq C_d \Delta^d \log(1/\epsilon)^d + 1$ implies that $\epsilon \leq \exp( - c_d  (K-1)^{1/d}/\Delta)$ where $c_d  = C_d^{-1/d}$. Therefore,
\[
\lambda_{K}(\Delta) \leq \exp( - c_d  (K-1)^{1/d}/\Delta) \mbox{ provided that } K > C_d \Delta^d \log(2)^d.
\]

Note, if $K \leq C_d \Delta^d \log(2)^d$ then $\lambda_{K}(\Delta) \leq 1 \leq C_d' \exp( - c_d  (K-1)^{1/d}/\Delta)$, provided that $C_d' \geq \exp(c_d C_d^{1/d} \log(2))$. 

Therefore, 
\[
\lambda_{K}(\Delta) \leq \max\{1, C_d'\} \exp( - c_d  (K-1)^{1/d}/\Delta) \mbox{ for all } K \geq 1.
\]
This completes the proof of Corollary \ref{eig_decay:cor}.
\end{proof}

\section{Appendix: Properties of Gevrey functions}
\label{appendix:sec}

\begin{proof}[Proof of Proposition \ref{prop:Gevrey1}.]
By the product rule, $(fg)^{(k)}$ is the sum of $2^k$ terms of the form $f^{(k_1)} g^{(k_2)}$ with $k_1 + k_2 = k$. By hypothesis on $f$ and $g$,
\[
\begin{aligned}
\| f^{(k_1)} g^{(k_2)}\|_\infty \leq C R^{k_1} (k_1!)^r \cdot D S^{k_2} (k_2!)^r &\leq CD \max\{R,S\}^{k_1+k_2} ((k_1+k_2)!)^{r} \\
&= CD \max\{R,S\}^{k} (k!)^r.
\end{aligned}
\]
Therefore, by the triangle inequality,
\[
\| (fg)^{(k)} \|_\infty \leq 2^k CD \max\{R,S\}^{k} (k!)^r.
\]
So, $fg \in G^r_0(CD,2 \max\{R,S\})$, as claimed.

The below proof of the composition law for Gevrey functions appears in \cite{Figuer1} (see Theorem 2.5). We repeat the proof here for the sake of completeness.

Let $f : I \rightarrow J$ and $g : J \rightarrow \R$  be in $G_0^r(C,R)$ and $G_0^r(D,S)$, respectively. Apply the Fa\'{a} di Bruno formula for the $n$th derivative of $h = g \circ f$, $n\geq 1$, $t \in I$,
\[
h^{(n)}(t) = \sum \frac{n!}{k_1! \cdots k_n!} g^{(k)}(f(t)) \prod_{j=1}^n \left(\frac{f^{(j)}(t)}{j!} \right)^{k_j},
\]
where the sum is taken over all $k_1,\cdots,k_n$ with
\[
k_1 + 2k_2 + \cdots + n k_n = n,
\]
and $k := k_1 + \cdots + k_n$ varies in the summation.
Because $f \in G^r_0(C,R)$ and $g \in G^r_0(D,S)$, we learn that
\begin{equation}\label{eqn:main1}
\begin{aligned}
|h^{(n)}(t)| &\leq \sum \frac{n!}{k_1! \cdots k_n!} \left(D S^k (k!)^r\right) \prod_{j=1}^n \left(\frac{ C R^j (j!)^r}{j!} \right)^{k_j} \\
& = n! D R^n  \sum \frac{1}{k_1! \cdots k_n!} C^k S^k (k!)^r m(k_1,\cdots,k_n) ,
\end{aligned}
\end{equation}
where 
\[
m(k_1,\cdots,k_n) = \prod_{j=1}^n \left( (j!)^{r-1} \right)^{k_j}.
\]
The sequence $(n!)^{1/(n-1)}$ is increasing. Therefore,
\[
(m!)^{r-1} \leq ((n!)^{r-1})^{(m-1)/(n-1)}
\]
for $1 < m \leq n$. Therefore,
\[
m (k_1,\cdots,k_n) = \prod_{j=1}^n \left( (j!)^{r-1} \right)^{k_j} \leq ((n!)^{r-1})^{(k_2 + 2k_3 + \cdots (n-1)k_n)/(n-1)}.
\]
and
\[
(k!)^r = k! (k!)^{r-1} \leq k! ((n!)^{r-1})^{(k-1)/(n-1)},
\]
so
\[
(k!)^r m(k_1,\cdots,k_n)  \leq  k! ((n!)^{r-1})^{(k_2 + 2k_3 + \cdots = (n-1)k_n + k-1)/(n-1)} = k! (n!)^{r-1}.
\]
Plugging that into \eqref{eqn:main1}, we get
\[
|h^{(n)}(t)| \leq D R^n (n!)^r \sum \frac{k!}{k_1!\cdots k_n!} C^k S^k
\]
Applying the following equality (see Lemma 1.3.2 of \cite{Krantz})
\[
\sum \frac{k!}{k_1!\cdots k_n!} H^k = H (1+H)^{n-1},
\]
gives
\[
|h^{(n)}(t)| \leq D R^n (n!)^r CS (1+CS)^{n-1} \leq D (R(1+CS))^n (n!)^r.
\]
This is valid for any $n \geq 1$.

Obviously $h = g \circ f$ satisfies $\| h \|_\infty \leq \| g \|_\infty \leq D$. Therefore, the above inequality is valid also for $n=0$, which implies that $g \circ f \in G^r_0(D,T)$ for $T = R(1+CS)$.

\end{proof}

\begin{proof}[Proof of Proposition \ref{prop:Gevrey_PW}]
We have $|\widehat{B}(\xi)| \leq \int_{-2}^2 |B(x)| dx \leq 4 C$. So, in particular, the bound is true for $\xi = 0$.

Now, for $\xi \neq 0$, apply integration by parts, and condition (a), and get that for any integer $k \geq 0$,
\[
\widehat{B}(\xi) = (-i\xi)^{-k} \int_{-2}^2 e^{-i x \xi} B^{(k)}(x) dx
\]
Then using condition (b),
\[
| \widehat{B}(\xi)| \leq |\xi|^{-k} 4 C R^k (k!)^{3/2} \leq 4 C (R/|\xi|)^k k^{3k/2}.
\]
In particular, $|\widehat{B}(\xi)| \leq 4C$. Further, by geometric averaging of the previous formula at consecutive integers, we obtain for any real number $s > 0$,
\[
| \widehat{B}(\xi)| \leq  4 C (R/|\xi|)^s \lceil s \rceil^{3\lceil s \rceil /2}.
\]
In particular, we can take $s = 2n/3$ for an integer $n \geq 1$. Note that $\lceil s \rceil \leq \min \{ n, 2n/3 + 1\}$, so
\[
| \widehat{B}(\xi)| \leq  4 C (R/|\xi|)^{2n/3} n^{2n/3 + 1}, \qquad n \geq 1.
\]
Now,
\[
|\widehat{B}(\xi)| \exp(a |\xi|^{2/3}) =  \sum_{n=0}^\infty \frac{a^n |\xi|^{2n/3} |\widehat{B}(\xi)|}{n!} = |\widehat{B}(\xi)| + \sum_{n=1}^\infty \frac{a^n |\xi|^{2n/3} |\widehat{B}(\xi)|}{n!}.
\]
Using the Stirling approximation, $n! \geq (n/e)^n$, and the previous bounds, we get
\[
|\widehat{B}(\xi)| \exp(a |\xi|^{2/3}) \leq 4C + 4C  \sum_{n=1}^\infty (ea R^{2/3})^n n.
\]
Now, set $a = \frac{1}{2 e R^{2/3}}$, so we get
\[
|\widehat{B}(\xi)| \exp(a |\xi|^{2/3}) \leq 4C +  4C  \sum_{n=1}^\infty 2^{-n} n = 12C.
\]
\end{proof}

\addcontentsline{toc}{section}{References}

\printbibliography

\end{document}